\let\oldproofname=\proofname
\renewcommand{\proofname}{\textsc{\oldproofname}}
\newcommand{\mylabel}[2]{#2\def\@currentlabel{#2}\label{#1}}
\newcommand{\vertiii}[1]{{\left\vert\kern-0.25ex\left\vert\kern-0.25ex\left\vert #1 
    \right\vert\kern-0.25ex\right\vert\kern-0.25ex\right\vert}}
\newcommand{\mathleft}{\@fleqntrue\@mathmargin50pt}
\newcommand{\mathcenter}{\@fleqnfalse}
\newcommand{\zeromathleft}{\@fleqntrue\@mathmargin0pt}
\newtheorem{prop}{Proposition}[section]
\newtheorem*{defi*}{Definition}
\newtheorem{defi}[prop]{Definition}
\newtheorem*{lem*}{Lemma}
\newtheorem{rem}[prop]{Remark}
\newtheorem{thm}[prop]{Theorem}
\newtheorem{claim}{Claim}
\numberwithin{equation}{section}
\definecolor{mygray}{gray}{0.4}
\DeclareMathOperator{\sign}{sign}
\begin{document}


\noindent {\Large\bf  The nonlinear Schr\"{o}dinger equation with white noise dispersion on quantum graphs}\\

\noindent Iulian C\^{i}mpean \footnote{\label{imar}\textsc{"Simion Stoilow" Institute of Mathematics  of the Romanian Academy, Calea Grivi\c{t}ei Street, No. 21, 010702, Bucharest, Romania}\\
\textsl{E-mail address:} \texttt{iulian.cimpean@imar.ro}} and
Andreea Grecu \footnote{\textsc{University of Bucharest, Academiei Street, No. 14, 010014, Bucharest, Romania} \\
\textsl{E-mail address}: \texttt{andreea.grecu@my.fmi.unibuc.ro}}\textsuperscript{,\ref{imar}}
\\

\noindent
{\small {\bf Abstract.} We show that the nonlinear Schr\"{o}dinger equation (NLSE) with white noise dispersion on quantum graphs is globally well-posed in $L^2$ once the free deterministic Schr\"{o}dinger group satisfies a natural $L^1-L^{\infty}$ decay, which is verified in many examples. Also, we investigate the well-posedness in the energy domain in general and in concrete situations, as well as the fact that the solution with white noise dispersion is the scaling limit of the solution to the NLSE with random dispersion.}
 \\[2mm]

\noindent
{\bf Keywords.} Quantum graphs, Schr\"{o}dinger operator, white noise dispersion, Strichartz estimates, nonlinear Schr\"{o}dinger equation, stochastic partial differential equations, spectral theory, nonlinear fiber optics.\\

\noindent
{\bf 2010 Mathematics Subject Classification.} 35J10, 34B45, 81U30, 60H15 (primary), 81Q35, 35P05, 35Q55 (secondary).


\section{Introduction} \label{introduction}

The aim of this paper is to study well-posedeness results for nonlinear Schr\"odinger equation (NLSE) on quantum graphs, of the type
\begin{equation} \label{eq general}
\mathrm{i} \dfrac{\partial u}{\partial t} +m(t) \dfrac{\partial^{2}u}{\partial x^{2}} + f(|u|^2)u=0,
\end{equation}
where the dispersion coefficient $m$ is randomly varying.
In order to give a precise description of the goals of this work we need some preliminaries on quantum graphs, so let us postpone these to the beginning of Section 2, and discuss here some more or less general aspects concerning \eqref{eq general} that are relevant to the present work. 
First of all, recall that on $\mathbb{R}^d$, equation \eqref{eq general} is generically used to model the evolution of nonlinear dispersive waves in inhomogeneous media.
It is important to mention that depending on the physical interpretation, the usual time variable $t$ and space variable $x$ can have reversed meaning. 
For example, when equation \eqref{eq general} describes the wave function of a particle in a Bose-Einstein condensate (also known as Gross-Pitaevskii equation), or stands as a model for superconductivity (known as Ginzburg-Landau equation), the variables $t$ and $x$ denote time and space, respectively; see \cite{berge}, but also the introduction of \cite{AdNo09} and the references therein.
In contrast, when modeling the propagation of a signal in an optical fiber ($d=1$), $t$ denotes the distance along the fiber, whilst $x$ corresponds to the retarded time; see e.g. \cite{agrawalarticle}. 

In fact, the present work is inspired by the one developed in \cite{Ma06} and \cite{debouarddebussche} which concerns the latter physical model, so let us give more details in this direction: The following nonlinear Schr\"odinger equation on $\mathbb{R}$
\begin{equation} \label{eq 1.1}
\mathrm{i} \dfrac{\partial v}{\partial t} + \varepsilon m(t) \dfrac{\partial^{2}v}{\partial x^{2}} + \varepsilon^{2}|v|^2v=0, \quad x \in \mathbb{R}, \ t>0
\end{equation}
was considered in \cite{Ma06} and \cite{debouarddebussche} as a model for the propagation of a signal in an optical fiber with randomly varying dispersion, where $(m(t))_{t\geq 0}$ is a centered stationary stochastic process which models the fluctuations of the dispersion, while $\varepsilon \ll 1$ controls its amplitude. 

In order to understand the diffusion approximation for (\ref{eq 1.1}), i.e. the limiting case $\varepsilon\to 0$, it is convenient to consider the scaling $X(t,x)=v(\frac{t}{\varepsilon^{2}},x)$, which leads to
\begin{equation} \label{eq 1.2}
\mathrm{i} \dfrac{\partial X}{\partial t} + \dfrac{1}{\varepsilon} m(\dfrac{t}{\varepsilon^{2}}) \dfrac{\partial^{2}X}{\partial x^{2}} + |X|^2X=0, \quad x \in \mathbb{R}, \ t>0
\end{equation}
If the following invariance principle is in force
\begin{enumerate}
\item[\mylabel{H0}{\bf (H.0)}] $
\bigg(\mathop{\mathlarger{\int_0^{t}}} \dfrac{1}{\varepsilon} m \bigg( \dfrac{s}{\varepsilon^2} \bigg) \ ds\bigg)_{t\geq 0}$ converges in law to a Brownian motion $(\beta(t))_{t\geq 0}$ when $\varepsilon \to 0$,
\end{enumerate}

\noindent then the limit equation reads
\begin{equation} \label{eq 1.3}
 dX(t)=\mathrm{i} \Delta X(t)\circ d\beta(t) + \mathrm{i} |X|^{2}X(t) \; dt, \quad t\geq 0
\end{equation}
or in It\^{o} form
\begin{equation} \label{eq 1.4}
dX(t)=-\frac{1}{2}\Delta^2 X(t) \; dt + \mathrm{i} \Delta X(t) \; d \beta(t) + \mathrm{i} |X|^{2}X(t) \; dt, \quad t\geq 0
\end{equation}

In fact, this model was first studied in \cite{Ma06} with a truncated nonlinearity $f(|v|^2)v$ instead of the cubic one, where $f$ is a smooth cutoff of the identity. 
More precisely, it was shown that in the case of such a truncation, a contraction argument works out smoothly to prove that equations (\ref{eq 1.2}) are well posed in $L^{2}(\mathbb{R})$ and $H^{2}(\mathbb{R})$, and their solutions converge in distribution to the $L^{2}$-solution of the (truncated) limit equation (\ref{eq 1.4}), when $\varepsilon \to 0$.
Also, a numerical splitting scheme was developed to simulate these solutions.

The case of full nonlinearity is much more involved, and it was treated a few years later in \cite{debouarddebussche} and then in \cite{DeTs11}, for quintic nonlinearity.
First of all, the varying dispersion makes the Hamiltonians associated to equations (\ref{eq 1.1}) or (\ref{eq 1.4}) to be no longer preserved, hence there are no a priori energy estimates for the solutions. 
Fortunately, the $L^{2}$-norm of the solution is preserved, and an $L^{2}$-approach turned out to be suitable, yet much more delicate than in the truncated case.
More precisely, the key ingredient from \cite{debouarddebussche}, \cite{DeTs11} consists of Strichartz type estimates obtained for the solution to the linear version of equation (\ref{eq 1.4}) (i.e. when the nonlinear term is discarded). 
These estimates are then employed to solve the nonlinear equation (\ref{eq 1.4}) in mild form on $L^{2}(\mathbb{R})$ (even on $H^{1}$), through a fixed point argument.
We emphasize that the Strichartz estimates were obtained in \cite{debouarddebussche}, \cite{DeTs11} for white noise dispersion only.
They are not available for the (linear) equations (\ref{eq 1.2}) with random dispersion in general, and for this reason, for full nonlinearity, equations (\ref{eq 1.2}) can be solved only locally, up to some stopping time $\tau_\varepsilon$.
However, it was shown in \cite{debouarddebussche}, \cite{DeTs11} that these local solutions converge in law to the global solution of equation (\ref{eq 1.4}), when $\varepsilon \to 0$.

Recently, in \cite{ChGu15} and \cite{gubinellichouk}, global well-posedness of nonlinear PDEs with modulated dispersion have been extended to a larger class of "sufficiently irregular" noises.
Although the results obtained in the present paper could be reconsidered under such more general noises, for simplicity we shall treat only the case of white noise dispersion.

Concerning the motivation of this paper, let us mention that in recent years there has been a growing interest in studying nonlinear Schrodinger equations on ramified structures, with different applications: condensed matter physics, nonlinear fiber optics, hydrodynamics, fluid transport, or neural networks.
Such ramified structures are modeled by quantum graphs, i.e. metric graphs endowed with a self-adjoint differential operator; see e.g. \cite{berkolaiko} and the references therein.
For example, in the standard case of propagation of waves in one dimensional domains, the presence of spatial point defects lead to boundary conditions and hence to a quantum graph model; see \cite{holmermarzuolasworski, AdNo09, kovariksacchetti2010, banicaignat2014} for details and further applications. 
Returning to the original applications of equations \eqref{eq 1.2}-\eqref{eq 1.4} to optical fibers as considered in \cite{Ma06} and \cite{debouarddebussche}, recall that the time and space variables $t$ and $x$ have reversed meanings.
Nevertheless, in the typical situation of bimodal optical fibers where two solitons (one narrower and one wider) are coupled by two corresponding NLS equations, it is convenient to reduce the system to a single NLS equation in which the narrow soliton in the mate mode will be represented by a perturbation of a one dimensional NLS equation with an effective delta potential in the (temporal) $x$ variable; see \cite[eq.(1)]{caomalomed} and the refences therein. 
In this way, we are once again lead to boundary conditions at $x=0$, hence to a quantum graph model where $t$ is the distance along the fiber while $x$ corresponds to the delayed time. 
More complex optical networks have been recently considered in \cite{GnSmDe11} or \cite{AdCaFiNo12}.

The goal of this paper is to investigate the well-posedeness of equation (\ref{eq 1.3}) (or \eqref{eq 1.4} in equivalent Ito form), as well as the convergence of the solutions of (\ref{eq 1.2}) when $\varepsilon \to 0$, on general quantum graphs, denoted further by $\Gamma$.
In the spirit of the previously mentioned applications, we emphasize that the modulation of the dispersion coefficient $(m(t))_{t\geq 0}$ in \eqref{eq 1.3} or $(\beta(t))_{t\geq 0}$ in \eqref{eq 1.4} may have either temporal or spatial meaning, depending on the physical system under consideration.

The rest of this paper is structured in two sections: in Section 2 we start with an overview on quantum graphs, and state the precise goals of this work. The rest and most consistent part of this section is a systematic exposition of the main results of the paper, trying to point out the specific difficulties which we deal with, in contrast to the work in \cite{Ma06}, \cite{debouarddebussche}, or \cite{DeTs11}; we structure this exposition in four subsections, as follows: Subsection 2.1 is devoted to the well-posedness of the linear equation \eqref{linear stochastic} (Proposition \ref{linear solution}) and the corresponding Strichartz estimates (Theorem \ref{stochastic strichartz}); in Subsection 2.2 we obtain the well-posedness of equation \eqref{eq 2.6} on $L^{2}(\Gamma)$ (Theorem \ref{wellposednessL2}); in Subsection 2.3, Theorems \ref{well-posedness} and \ref{convergence}, we state the well posedness in the energy domain and convergence of the approximate solutions when $\varepsilon \to 0$, for truncated nonlinearities; the extension of these last two results to full nonlinearities is much more delicate, and we restrict our study to the case of star-graphs, with the mention that our strategy is general and can be applied to other situations (see Subsection 2.4, Theorems \ref{thm 2.16}, \ref{convergence3}, and \ref{thm 2.19}). 
Section 3 contains the proofs of the results presented in Section 2.

\section{Preliminaries on quantum graphs and the main results} \label{preliminaries and main results}

First, let us give a brief overview on quantum graphs, following mainly \cite{laplacians} and \cite{berkolaiko}. 
A finite graph $\Gamma$ is a triplet $\Gamma=(V,E,\partial)$, where $V=\{v_i \}_{i}$ is a finite set of vertices, $E=\{e_j \}_j =:I \cup \mathcal{E}$ is a finite set of (internal, respectively external) edges that connect the vertices, and $\partial$ is a map (called orientation) which assigns to an internal edge $e \in I$ an ordered pair of vertices $\partial(e)=\{\partial^{-}(e), \partial^{+}(e)\}$, and to an external edge $e \in \mathcal{E}$ its single vertex. $\partial^{-}(e)$ and $\partial^{+}(e)$ are called the initial and the terminal vertex of the edge $e$, respectively. The graph $\Gamma$ is assumed to be connected, i.e. any two vertices can be connected by an edge w.r.t. the order given by $\partial$.

We endow the graph $\Gamma$ with the following metric structure: each internal edge $e$ is identified with an interval $[0,l_e]$ where zero corresponds to $\partial^{-}(e)$; similarly, an external edge corresponds to a semi line $[0,\infty)$. Based on this identification, each edge is endowed with the euclidean metric on the corresponding interval, and, in general, the distance between two points on $\Gamma$ is taken to be the length of the shortest path between them.

\vspace{0.2 cm}
\noindent {\bf Function spaces on $\Gamma$}. A complex valued function $f : \Gamma \to \mathbb{C}$ is regarded as a collection $f=(f_e)_{e \in E}$, where $f_e : I_e \to \mathbb{C}$; $I_e=[0,l_e]$ or $[0,\infty)$, for all $e \in E$.
We denote by $L^p(\Gamma)$, $1 \leq p \leq \infty$, the space of all elements $f=(f_e)_{e \in E}$ where $f_e \in L^p(I_e)$ for all $e \in E$. $L^p(\Gamma)$ becomes a Banach space with respect to the norm
\begin{equation*}
|| f ||^p_{L^p(\Gamma)} = \sum\limits_{e \in E} || f_e ||^p_{L^p(I_e)} \quad {\rm if} \quad p < \infty,\quad  || f ||_{L^{\infty}(\Gamma)} = \sup\limits_{e \in E} || f_e ||_{L^{\infty}(I_e)},
\end{equation*}

\noindent i.e. $L^p(\Gamma)=\bigoplus\limits_{e \in E} L^p(I_e)$. Similarly, we consider the Sobolev spaces $W^{k,p}(\Gamma)= \bigoplus W^{k,p}(I_e)$, $k\in \mathbb{N}^{\ast}, 1\leq p \leq \infty$ with the norm
\begin{equation*}
|| f ||^p_{W^{k,p}(\Gamma)} = \sum\limits_{e \in E} || f_e ||^p_{W^{k,p}(I_e)} .
\end{equation*}

\begin{rem}
Since $\Gamma$ is one dimensional, every element $f \in W^{k,p}(\Gamma)$ possesses a continuous version on each interval $I_e$, but there is no a priori information on how the values of $f$ at the vertices are coupled. The coupling conditions are provided by (the domain of) the heat operator, which we describe in the sequel.
\end{rem}

Let us first consider the space of test functions $D_0=\bigoplus\limits_{e \in E} C_0^{\infty}(I_e)$, where $C_0^{\infty}$ consists of infinitely differentiable functions with compact support on $\mathring{I_e}$, and the operator $\Delta_\Gamma^0 : D_0 \subset L^2(\Gamma) \to L^2(\Gamma)$, given by 
$$
(\Delta_\Gamma^0f)_e (x)=\dfrac{d^2}{dx^2} f_e (x), \;\; e \in E, x \in I_e.
$$
By symmetry, $(\Delta_\Gamma^0,D_0)$ is a closable operator on $L^2(\Gamma)$, and its minimal domain, i.e. the domain of its closure, is given by $D(\Delta_\Gamma^0):=\{ f \in W^{2,2}(\Gamma) : f_e (0)=f_e (l_e)= f'_e (0)=f'_e (l_e)=0, \ e \in E\}$. 
However, the interest is to consider other coupling conditions, especially those that correspond to self-adjoint extensions of $(\Delta_\Gamma^0,D_0)$, which clearly is not the case of $(\Delta_\Gamma^0,D(\Delta_\Gamma^0))$.
Fortunately, the self-adjoint extensions of $(\Delta_\Gamma^0,D_0)$ can be completely characterized in terms of the coupling conditions, as follows: let $\{A_v,B_v \}_{v \in V}$ be a family of matrices from $\mathbb{C}^{n_v \times n_v}$, where $n_v$ denotes the number of edges with common vertex $v \in V$. 
Let us consider the extension $\Delta_\Gamma$ of $(\Delta_\Gamma^0,D_0)$, with domain
\begin{equation*}
D(\Delta_\Gamma):=\{ f \in H^2(\Gamma): \ A_v \overline{f_v} + B_v \overline{f'_v}=0, \ v \in V \}
\end{equation*}
where $\overline{f_v}=(f_e(v))_{e \in E}$ and $\overline{f'_v}=(f'_e(v))_{e \in E}$ are column vectors.

\vspace{0.2 cm}
Now we can recall the following well known characterization.
\begin{thm}[cf. \cite{laplacians}] \label{self-adjointconditions} $(\Delta_\Gamma,D(\Delta_\Gamma))$ is self-adjoint if and only if the following conditions are satisfied:
\begin{enumerate}[label=(\roman*)]
\item The concatenated matrix $(A_v,B_v)$ has maximal rank, $n_v$;
\item $A_vB^{\dag}_v$ is self-adjoint, where $B^{\dag_v}$ is the adjoint transpose of $B_v$.
\end{enumerate}
\end{thm}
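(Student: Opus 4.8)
The plan is to realize $(\Delta_\Gamma, D(\Delta_\Gamma))$ as an intermediate extension sitting between the closed symmetric minimal operator $(\Delta_\Gamma^0, D(\Delta_\Gamma^0))$ and its adjoint, and to recast self-adjointness as a Lagrangian (maximal isotropic) condition on the boundary data. First I would identify the adjoint of the minimal operator: since $\Delta_\Gamma^0$ acts as $d^2/dx^2$ with all boundary values of $f$ and $f'$ annihilated, one-dimensional elliptic regularity gives that its adjoint is the maximal operator, again $d^2/dx^2$ but on the full space $H^2(\Gamma)$ with no coupling conditions. Every symmetric extension therefore has its domain squeezed between $D(\Delta_\Gamma^0)$ and $H^2(\Gamma)$ and is singled out by a linear restriction of the boundary vector, which is exactly the ansatz $A_v \overline{f_v} + B_v \overline{f_v'}=0$.

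The structural tool is Green's formula. Integrating by parts edge by edge, for $f,g\in H^2(\Gamma)$ one obtains
$$\langle \Delta_\Gamma f, g\rangle_{L^2(\Gamma)} - \langle f, \Delta_\Gamma g\rangle_{L^2(\Gamma)} = \sum_{v \in V} \omega_v\big((\overline{f_v}, \overline{f_v'}),(\overline{g_v}, \overline{g_v'})\big),$$
where, with the convention that derivatives point into the edges away from $v$, the vertex boundary form is the nondegenerate skew-Hermitian form $\omega_v(x,y)=\langle Jx,y\rangle_{\mathbb{C}^{2n_v}}$ with $J=\begin{pmatrix} 0 & -I_{n_v} \\ I_{n_v} & 0\end{pmatrix}$ and $x=(\overline{f_v},\overline{f_v'})$. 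Since the contributions decouple over vertices, the whole problem localizes: writing $\mathcal{L}_v:=\ker(A_v,B_v)\subset\mathbb{C}^{2n_v}$, the operator $\Delta_\Gamma$ is symmetric iff each $\mathcal{L}_v$ is isotropic for $\omega_v$, and self-adjoint iff each $\mathcal{L}_v$ is in addition maximal isotropic, i.e. Lagrangian ($\mathcal{L}_v=\mathcal{L}_v^{\perp}$). This last equivalence is the abstract boundary-triple dictionary, for which the main care is to verify that $\omega_v$ is genuinely nondegenerate on the finite-dimensional quotient $H^2(\Gamma)/D(\Delta_\Gamma^0)$.

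It then remains to translate ``Lagrangian'' into conditions (i) and (ii). Because $\omega_v$ is nondegenerate on a $2n_v$-dimensional space, an isotropic subspace has dimension at most $n_v$; as $\dim\mathcal{L}_v = 2n_v - \mathrm{rank}(A_v,B_v)\ge n_v$ always, isotropy already forces $\mathrm{rank}(A_v,B_v)=n_v$, which is (i), and conversely (i) is precisely what makes $\dim\mathcal{L}_v=n_v$, so that isotropic and Lagrangian coincide. For the second condition I would compute the orthogonal complement explicitly: $x\in\mathcal{L}_v^{\perp}$ iff $Jx\perp\ker(A_v,B_v)$, i.e. $Jx\in\mathrm{range}\,(A_v,B_v)^{\dagger}$, whence $\mathcal{L}_v^{\perp}=\mathrm{range}\begin{pmatrix} B_v^{\dagger} \\ -A_v^{\dagger}\end{pmatrix}$. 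Then $\mathcal{L}_v^{\perp}\subseteq\mathcal{L}_v$ reads $(A_v,B_v)\begin{pmatrix} B_v^{\dagger} \\ -A_v^{\dagger}\end{pmatrix}=A_v B_v^{\dagger}-B_v A_v^{\dagger}=0$, which is exactly the requirement that $A_v B_v^{\dagger}$ be self-adjoint, i.e. (ii); under (i) the two subspaces have equal dimension $n_v$, so this inclusion upgrades to the equality $\mathcal{L}_v=\mathcal{L}_v^{\perp}$ characterizing self-adjointness.

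I expect the main obstacle to be the functional-analytic groundwork rather than the linear algebra: pinning down that the adjoint of the minimal operator is exactly the no-boundary-condition maximal operator on $H^2(\Gamma)$, establishing nondegeneracy of the boundary form on the quotient, and correctly bookkeeping edge orientations and the external (semi-infinite) edges in Green's formula, where only one endpoint contributes. Once the boundary form is in hand, the passage to (i) and (ii) is the short computation above.
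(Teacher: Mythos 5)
The paper does not prove this theorem: it is imported verbatim from the cited reference \cite{laplacians} (Kostrykin--Schrader), so there is no internal proof to compare against. Your proposal is correct and is essentially the standard argument from that source: reduce to the maximal operator via elliptic regularity, use Green's formula to identify self-adjointness of the restriction cut out by $A_v\overline{f_v}+B_v\overline{f'_v}=0$ with the condition $\ker(A_v,B_v)=\ker(A_v,B_v)^{\perp_{\omega_v}}$, and then translate ``Lagrangian'' into (i) and (ii) by the computation $\mathcal{L}_v^{\perp}=\mathrm{range}\bigl(\begin{smallmatrix} B_v^{\dagger}\\ -A_v^{\dagger}\end{smallmatrix}\bigr)$ together with the dimension count $\dim\mathcal{L}_v=2n_v-\mathrm{rank}(A_v,B_v)$. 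The linear algebra checks out, and you have correctly flagged the only genuinely delicate points (identification of the adjoint of the minimal operator, nondegeneracy of the boundary form, and the sign/orientation conventions at endpoints of internal versus external edges); note also that the vertex-by-vertex decoupling you use is legitimate here precisely because the paper restricts to local coupling conditions.
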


\noindent We emphasize that the coupling matrices $A_v,B_v$ are not unique (but merely modulo an invertible matrix).
There is another characterization due to \cite[Theorem 1.4.4]{berkolaiko}, according to which the operator $\Delta_\Gamma$ is self-adjoint if and only if for every vertex $v$ of degree $d_v$, there are three unique orthogonal (and mutually orthogonal) projectors $P_{D,v}, P_{N,v}$, and $P_{R,v}=I-P_{D,v}-P_{N,v}$ acting on $\mathbb{C}^{d_v}$, and $\Lambda_v$  invertible and self-adjoint acting on $P_{R,v}\mathbb{C}^{d_v}$, such that the boundary values of $f$ satisfy
\begin{equation*}
\begin{cases}
P_{D,v}\overline{f_v}=0, & "Dirichlet \, Part"\\
P_{N,v}\overline{f^{'}_v}=0, & "Neumann \, Part"\\
P_{R,v}\overline{f^{'}_v}=\Lambda_vP_{R,v}\overline{f_v}, & "Robin \, Part"\\
\end{cases}.
\end{equation*}

\noindent The quadratic form $\mathcal{E}$ associated to $\Delta_\Gamma$ is given by
\begin{align} \label{formdomain}
&\mathcal{E}(f,f) =  \sum_{j=1}^{|E|} \int_{e_j}  |f'_j(x)|^2 \, dx + \sum_{v \in V} < \Lambda_v P_{R,v} f(v), P_{R,v} f(v) > \; \; \mbox{for all } f\in D(\mathcal{E});  \nonumber \\
& D(\mathcal{E}):= \{  f \in W^{1,2}(\Gamma) \, : \, P_{D,v}f(v)=0 \; \mbox{ for all } v \in V  \}.
\end{align}

We will frequently employ the following equivalence of norms.
\begin{prop}[cf. \cite{berkolaiko}, p. 23]\label{equivnorms} There exists $M>0$ such that for any $f \in D(\mathcal{E})$, the norm $ \| \cdot \|_{D(\mathcal{E})}$ given by
\vspace{-10pt}
\begin{equation}\label{formnorm}
\| f \|_{D(\mathcal{E})} := \sqrt{M \| f \|^2_{L^2(\Gamma)} + \mathcal{E}(f,f)}
\end{equation}
is well defined and equivalent to $\|\cdot\|_{W^{1,2}(\Gamma)}$. 
In particular, $(D(\mathcal{E}), \| \cdot \|_{D(\mathcal{E})})$ is a Hilbert space.

\end{prop}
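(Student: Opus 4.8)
The plan is to reduce everything to a single one-dimensional \emph{trace inequality} on intervals, which will let me absorb the vertex (Robin) term of $\mathcal{E}$ into the gradient and $L^2$ parts. I first record the sesquilinear form underlying $\|\cdot\|_{D(\mathcal{E})}$,
\[\langle f,g\rangle_{D(\mathcal{E})} := M\langle f,g\rangle_{L^2(\Gamma)} + \sum_{j} \int_{e_j} f_j'\,\overline{g_j'}\,dx + \sum_{v\in V} \langle \Lambda_v P_{R,v} f(v), P_{R,v} g(v)\rangle .\]
Since each $\Lambda_v$ is self-adjoint, this form is Hermitian, so $\|f\|_{D(\mathcal{E})}=\sqrt{\langle f,f\rangle_{D(\mathcal{E})}}$ is a genuine Hilbert (semi)norm \emph{provided} the radicand is nonnegative; establishing that nonnegativity together with the two-sided comparison to $\|\cdot\|_{W^{1,2}(\Gamma)}$ is the whole content of the statement.

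Next I would prove the trace estimate: for every edge $e$ (of length $l_e\in(0,\infty]$), every $g\in W^{1,2}(I_e)$, endpoint $v$ of $e$, and $\varepsilon>0$,
\[|g(v)|^2 \le \varepsilon\,\|g'\|_{L^2(I_e)}^2 + C_{\varepsilon,e}\,\|g\|_{L^2(I_e)}^2,\]
with $C_{\varepsilon,e}$ depending only on $\varepsilon$ and $l_e$. On a half-line this follows from $|g(0)|^2=-2\,\mathrm{Re}\int_0^\infty \overline{g}\,g'\,dx\le 2\|g\|_{L^2}\|g'\|_{L^2}$ and Young's inequality; on a finite interval I would average $|g(v)|^2\le |g(x)|^2+2\int_{I_e}|g|\,|g'|\,dx$ over $x\in I_e$ to pick up the extra $l_e^{-1}\|g\|_{L^2}^2$ term. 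Because $\Gamma$ is \emph{finite}, the maximum of the finitely many $C_{\varepsilon,e}$ gives a single constant $C_\varepsilon$, so summing over edges and using $\|\Lambda_v\|\le C_\Lambda<\infty$ yields
\[\Big|\sum_{v\in V}\langle \Lambda_v P_{R,v} f(v),P_{R,v} f(v)\rangle\Big| \le C_\Lambda \sum_{v\in V}|f(v)|^2 \le C_\Lambda\Big(\varepsilon \sum_j\|f_j'\|_{L^2}^2 + C_\varepsilon\|f\|_{L^2(\Gamma)}^2\Big).\]

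The equivalence then follows by choosing constants in the right order: fix $\varepsilon$ so small that $C_\Lambda\varepsilon\le \tfrac12$, then pick $M\ge C_\Lambda C_\varepsilon+\tfrac12$. The lower bound reads
\[M\|f\|_{L^2(\Gamma)}^2+\mathcal{E}(f,f) \ge (M-C_\Lambda C_\varepsilon)\|f\|_{L^2(\Gamma)}^2+\tfrac12\sum_j\|f_j'\|_{L^2}^2 \ge \tfrac12\|f\|_{W^{1,2}(\Gamma)}^2\ge 0,\]
which simultaneously gives nonnegativity of the radicand (hence well-definedness) and coercivity; the matching upper bound $\|f\|_{D(\mathcal{E})}^2\le C\|f\|_{W^{1,2}(\Gamma)}^2$ comes from the same absorption estimate applied with the plus sign. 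For the Hilbert space assertion it then suffices, by norm equivalence, to check that $D(\mathcal{E})$ is closed in $\big(W^{1,2}(\Gamma),\|\cdot\|_{W^{1,2}}\big)$: the trace estimate shows the evaluation map $f\mapsto (f(v))_{v\in V}$ is bounded from $W^{1,2}(\Gamma)$ into the finite-dimensional space of boundary values, so $f\mapsto (P_{D,v}f(v))_{v\in V}$ is continuous and $D(\mathcal{E})$, being its kernel, is a closed subspace of the Hilbert space $W^{1,2}(\Gamma)$, hence complete. Combined with the inner-product structure above, this makes $(D(\mathcal{E}),\|\cdot\|_{D(\mathcal{E})})$ a Hilbert space. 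The main obstacle is precisely the first absorption step: because $\Lambda_v$ is only self-adjoint and \emph{not} sign-definite, the Robin term may be negative and $\mathcal{E}$ alone need not be coercive, so one must trade a small share of the gradient norm (small $\varepsilon$) against a large share of the $L^2$ norm (large $M$); the admissibility of this trade-off relies on the finiteness of $\Gamma$, which keeps all edge-dependent constants uniformly bounded.
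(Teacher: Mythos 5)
Your proof is correct and complete: the $\varepsilon$-trace inequality $|g(v)|^2\le\varepsilon\|g'\|^2_{L^2(I_e)}+C_{\varepsilon,e}\|g\|^2_{L^2(I_e)}$ followed by absorption of the sign-indefinite Robin term (small $\varepsilon$ against large $M$, uniformly over the finitely many edges) is exactly the standard argument behind the cited result in \cite{berkolaiko}, and your closedness argument for $D(\mathcal{E})$ in $W^{1,2}(\Gamma)$ correctly settles the Hilbert space assertion. The paper itself offers no proof beyond the citation, so there is nothing further to compare; no gaps.
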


From now on we assume that $(\Delta_{\Gamma}, D(\Delta_{\Gamma})$ is a self-adjoint extension of $(\Delta^0_{\Gamma},D(\Delta^0_{\Gamma})))$ with local coupling conditions $\{ A_v,B_v \}_{v \in V}$. By $\mathrm{e}^{\mathrm{i}t \Delta_{\Gamma}}$ we denote the strongly continuous group of izometries on $L^2(\Gamma)$ generated by $\mathrm{i \Delta_{\Gamma}}$.

\vspace{0.2 cm}
\noindent
{\bf Main goals.} For the rest of the paper, $\beta:=(\beta(t))_{t\geq 0}$ denotes a standard 1-dimensional Brownian motion on a filtered probability space $(\Omega, \mathcal{F}, \mathcal{F}_t, \mathbb{P})$ which satisfies the usual hypotheses.
We point out that throughout, $(\beta(t))_{t\geq 0}$ could be replaced by $(\beta(t)+\mu t)_{t\geq 0}$ for some constant $\mu \in \mathbb{R}$.

The first main aim is to investigate the well-posedeness in $L^{2}(\Gamma)$ and $D(\mathcal{E})$ of the following  nonlinear Schr\"{o}dinger equation with white noise dispersion, on $\Gamma$:
\begin{equation} \label{eq 2.6}
\left\{
\begin{array}{lll}
                 dX(t)=-\frac{1}{2}\Delta_\Gamma^2 X(t) \; dt + i\Delta_\Gamma X(t) \; d \beta(t) + i|X|^{2\sigma}X(t) \; dt, & t\in[0,\infty)\\ [5pt]
                  X(0) =X_0.\\
         \end{array}
         \right.
\end{equation}
In fact, as in \cite{debouarddebussche}, the strategy is to tackle \eqref{eq 2.6} in mild form:
\begin{equation} \label{mild}
X(t)=S_{\beta}(t,0)X_0+\mathrm{i}\int_0^t S_{\beta}(t,s)|X(s)|^{2\sigma}X(s) \; ds, \quad t\geq 0, \ \mathbb{P}\mbox{-a.s.}
\end{equation}
where $S_\beta(\cdot,s)$ (see Subsection 2.1) gives the solution to the linear equation, starting at time $s\geq 0$.
We emphasize that the sign in front of the nonlinearity is not important, since $-\beta$ is still a Brownian motion.

The second aim is to study the convergence of the (local) solutions of 
\begin{equation} \label{approximate}
\begin{cases}
\dfrac{dX(t)}{dt} =\mathrm{i}  \dfrac{1}{\varepsilon} m\bigg(\dfrac{t}{\varepsilon^2}\bigg) \Delta_{\Gamma} X(t)+ \mathrm{i} |X|^{2\sigma}X(t) =0, & t>0, \ \mbox{ on } \Gamma\\
X(0)=X_0
\end{cases},
\end{equation}
to the solution of \eqref{eq 2.6}, when $\varepsilon \to 0$, provided that the process $m$ satisfies \ref{H0}.

\subsection{The linear stochastic equation and Strichartz type estimates} \label{linear equation and strichartz}

As in \cite{debouarddebussche}, the key ingredient to prove the well-posedness of \eqref{mild} is the Strichartz type estimates for the solution to the linear Schr\"{o}dinger equation with white noise dispersion:
\begin{equation}\label{linear stochastic}
\left\{
    \begin{array}{lll}
                  dX(t)=-\frac{1}{2}\Delta_\Gamma^2 X(t) \; dt + \mathrm{i}\Delta_\Gamma X \; d \beta(t), & t\geq s \\[5pt]
                  X(s) =X_s \\
	\end{array}
	\right.
\end{equation} 
Recall that on $\mathbb{R}$, the solution to \eqref{linear stochastic} can be explicitly obtained by Fourier transform (see \cite{Ma06} and \cite{debouarddebussche}), and it is given by
\begin{equation}\label{stochastic semigroup}
S_{\beta}(t,s)X_s(\omega):={\mathrm{e}^{\mathrm{i}[\beta(t)-\beta(s)](\omega)\Delta_\Gamma}}X_s(\omega) \;\; \mbox{ for }  s\leq t, \omega \in \Omega.
\end{equation}
Although Fourier transform is no longer available on $\Gamma$, we can use spectral arguments to rigorously show that $S_{\beta}(t,s)$ is well-defined and gives the solution to \eqref{linear stochastic}. 
More precisely, we have:
  
\begin{prop}\label{linear solution}
Let $s \geq 0$ and assume that $X_s\in D(\Delta_\Gamma^2)$ $\mathbb{P}$-a.s. Then $(S_{\beta}(t,s)X_s)_{t\geq s}$ given by \eqref{stochastic semigroup} is the (pathwise) unique strong solution to \eqref{linear stochastic}, with paths in $C([s,\infty),D(\Delta_\Gamma^2))$ a.s.
In particular, $|S_{\beta}(t,s)X_s|_{L^2}=|X_s|_{L^2} \ for \ all \ t \geq s \mbox{ a.s.}$
\end{prop}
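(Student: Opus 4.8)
The plan is to diagonalize $\Delta_\Gamma$ via the spectral theorem and thereby reduce the infinite-dimensional It\^o calculus to a scalar computation, mimicking on $\Gamma$ the role played by the Fourier transform on $\mathbb{R}$ in \cite{debouarddebussche}. Since $\Delta_\Gamma$ is self-adjoint, there is a unitary $U \colon L^2(\Gamma) \to L^2(\Sigma,\nu)$ and a real measurable function $a$ on $\Sigma$ with $U\Delta_\Gamma U^{-1}$ equal to multiplication by $a$; accordingly $\mathrm{e}^{\mathrm{i}b\Delta_\Gamma}$ becomes multiplication by $\mathrm{e}^{\mathrm{i}ba}$, and the hypothesis $X_s \in D(\Delta_\Gamma^2)$ translates into $(1+a^2)\widehat{X}_s \in L^2(\Sigma,\nu)$, where $\widehat{X}_s := UX_s$. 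Setting $\widehat{X}(t,\xi) := \mathrm{e}^{\mathrm{i}[\beta(t)-\beta(s)]a(\xi)}\widehat{X}_s(\xi)$, I would first record the pointwise (in $\xi$) scalar It\^o identity obtained by applying It\^o's formula to $b \mapsto \mathrm{e}^{\mathrm{i}ba(\xi)}$ along $b=\beta(t)-\beta(s)$: since this map has first derivative $\mathrm{i}a(\xi)\mathrm{e}^{\mathrm{i}ba(\xi)}$ and second derivative $-a(\xi)^2\mathrm{e}^{\mathrm{i}ba(\xi)}$, one obtains
\begin{equation*}
\mathrm{d}\widehat{X}(t,\xi) = \mathrm{i}a(\xi)\widehat{X}(t,\xi)\,\mathrm{d}\beta(t) - \tfrac{1}{2}a(\xi)^2\widehat{X}(t,\xi)\,\mathrm{d}t,
\end{equation*}
which is precisely the spectral image of \eqref{linear stochastic}, the factors $a$ and $a^2$ being the symbols of $\Delta_\Gamma$ and $\Delta_\Gamma^2$.

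The second step is to lift this scalar identity to an $L^2(\Gamma)$-valued integral equation. Integrating in $t$ and invoking a stochastic Fubini theorem to exchange the $\mathrm{d}\beta$- and $\mathrm{d}t$-integrals with integration against an arbitrary test element of $L^2(\Sigma,\nu)$, I would verify that
\begin{equation*}
\widehat{X}(t) = \widehat{X}_s + \mathrm{i}\int_s^t a\,\widehat{X}(r)\,\mathrm{d}\beta(r) - \tfrac{1}{2}\int_s^t a^2\,\widehat{X}(r)\,\mathrm{d}r
\end{equation*}
holds as an identity in $L^2(\Sigma,\nu)$. The integrability needed to make both integrals well-defined is exactly $\sup_{r}|a^2\widehat{X}(r)|_{L^2} = |a^2\widehat{X}_s|_{L^2} < \infty$, which is where $X_s \in D(\Delta_\Gamma^2)$ enters (note $|\mathrm{e}^{\mathrm{i}ba}|=1$, so every spectral norm is preserved pathwise). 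Applying $U^{-1}$ then returns the strong (integral) form of \eqref{linear stochastic} in $D(\Delta_\Gamma^2)$.

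For the regularity of paths I would argue that, since $\mathrm{e}^{\mathrm{i}b\Delta_\Gamma}$ commutes with $\Delta_\Gamma$ and $\Delta_\Gamma^2$ through the functional calculus and is unitary on $L^2(\Gamma)$, it restricts to a strongly continuous one-parameter group on the Hilbert space $D(\Delta_\Gamma^2)$ equipped with its graph norm; composing with the a.s.\ continuous path $t\mapsto \beta(t)-\beta(s)$ yields paths in $C([s,\infty),D(\Delta_\Gamma^2))$. The conservation law $|S_\beta(t,s)X_s|_{L^2}=|X_s|_{L^2}$ is then immediate from unitarity.

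Finally, for pathwise uniqueness I would take the difference $Z:=X-Y$ of two strong solutions, which solves \eqref{linear stochastic} with $Z(s)=0$, and apply It\^o's formula to $t\mapsto |Z(t)|_{L^2}^2$. Using self-adjointness, $\langle Z,\Delta_\Gamma^2 Z\rangle = |\Delta_\Gamma Z|_{L^2}^2$ and $\mathrm{Re}\,\langle Z,\mathrm{i}\Delta_\Gamma Z\rangle=0$, so the martingale part vanishes while the drift contribution $-|\Delta_\Gamma Z|_{L^2}^2\,\mathrm{d}t$ is cancelled exactly by the quadratic-variation term $|\mathrm{i}\Delta_\Gamma Z|_{L^2}^2\,\mathrm{d}t$; hence $|Z(t)|_{L^2}^2\equiv 0$ and $X=Y$. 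I expect the main obstacle to be the rigorous justification of the infinite-dimensional It\^o calculus under the unbounded operators $\Delta_\Gamma,\Delta_\Gamma^2$ — in particular the stochastic Fubini step and the verification that the spectral integrals define genuine $L^2(\Gamma)$-valued processes — which is exactly what the reduction to the multiplication representation is designed to control.
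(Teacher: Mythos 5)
Your argument is correct and, for the existence part, is essentially the paper's own proof: the paper likewise diagonalizes $\Delta_\Gamma$ through the spectral theorem --- written as Stone's formula, $f(-\Delta_\Gamma)u_0=\lim_n\lim_{\varepsilon\to0}\int_{-n}^{n}f(\lambda)[R_{\lambda+\mathrm{i}\varepsilon}-R_{\lambda-\mathrm{i}\varepsilon}]u_0\,d\lambda$, rather than in your abstract multiplication-operator form --- applies the scalar It\^o formula to $\lambda\mapsto \mathrm{e}^{\mathrm{i}[\beta(t)-\beta(s)]\lambda}$, and then interchanges the spectral integral with the $d\beta$- and $dt$-integrals by classical and stochastic Fubini/dominated convergence, with $X_s\in D(\Delta_\Gamma^2)$ supplying exactly the integrability you identify. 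One point you should make explicit (the paper does): the spectral representation is also what yields $(\mathcal{F}_t)$-adaptedness of $S_\beta(t,s)X_s$ for \emph{random} $X_s$, which is the whole reason the statement is not an immediate consequence of the Hilbert-space It\^o formula applied to $t\mapsto \mathrm{e}^{\mathrm{i}t\Delta_\Gamma}X_s$. Where you genuinely diverge is uniqueness: the paper runs the same spectral/It\^o machinery in reverse to show $\mathrm{e}^{-\mathrm{i}[\beta(t)-\beta(s)]\Delta_\Gamma}Z(t)\equiv X_s$ (an integrating-factor argument that identifies the solution formula directly), whereas you take the difference $Z$ of two solutions and compute $d|Z(t)|_{L^2}^2$, noting that the drift contribution $-|\Delta_\Gamma Z|_{L^2}^2\,dt$ is cancelled by the It\^o correction $|\mathrm{i}\Delta_\Gamma Z|_{L^2}^2\,dt$. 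Both arguments are valid within the class of strong solutions with paths in $C([s,\infty),D(\Delta_\Gamma^2))$; yours is the more standard energy identity, the paper's has the mild advantage of exhibiting the representation \eqref{stochastic semigroup} as the unique candidate.
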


\begin{rem}
In the previous proposition, if $X_s\in D(\Delta_\Gamma^2)$ is deterministic, then the fact that $S(\cdot,s)X_s$ is a solution would follow directly by It\^ o formula on Hilbert spaces (see e.g. \cite{daprato, liurockner}), since
$F:\mathbb{R}\rightarrow L^2(\Gamma)$, $F(t)={\mathrm{e}^{\mathrm{i}t\Delta_\Gamma}X_s}$ is twice Frechet differentiable. 
However, for the sake of the mild formulation \eqref{mild}, it is necessary to allow random initial data $X_s$, and we do this rigorously in Proposition \ref{linear solution} is to do this rigorously. ensures that $S(\cdot,s)X_s$ remains the solution to \eqref{linear stochastic} also for random initial data.
\end{rem}

In orther to get the desired Strichartz estimates, we point out that in the case $\Gamma = \mathbb{R}$, the key starting point in \cite{debouarddebussche} is the dispersive estimate $|e^{it\Delta}|_{L^{1}(\mathbb{R})\to L^{\infty}(\mathbb{R})} \lesssim |t|^{-1/2} ,t\in \mathbb{R}^{\ast}$.
Such an estimate on $\Gamma$ is verified in few situations, mainly because of the presence of nonempty point spectrum, and
it turns out that the following general hypothesis is much more convenient:
\begin{enumerate}
\item[\mylabel{H1}{\bf (H.1)}] The number of eigenvalues of $-\Delta_\Gamma$, counting their multiplicities, is at most finite, and there exists a constant $C > 0$ such that
\begin{equation} \label{H1eq}
\| {\mathrm{e}^{\mathrm{i} t \Delta_\Gamma} P_c} u_0 \|_{L^{\infty}(\Gamma)} \leq C \dfrac{1}{\sqrt{|t|}} \ \| u_0 \|_{L^1(\Gamma)}, \quad \text{for all } u_0 \in L^1(\Gamma) \cap L^2(\Gamma) \text{ and } t \neq 0,
\end{equation}
where $P_c=I-P_p $, and $P_p$ is the orthogonal projection onto the linear span of the eigenfunctions, in $L^2(\Gamma)$.
\end{enumerate}
In subsections 2.4 and 2.5 we discuss general and concrete situations when hypothesis \ref{H1} is fulfilled.

\begin{defi}
Following \cite{debouarddebussche}, an exponent pair $(r,p)$ is called admissible if $r=\infty, p=2$ or $2 \leq r,p<\infty$ and $\frac{2}{r}+\frac{1}{p}>\frac{1}{2}$.
\end{defi}

In the sequel, in order to lighten the notations, we shall often write $L^{r}_{\omega} L^{r}_{[s,s+T]} L^{p}_x$ instead of $L^{r}(\Omega; L^{r}([s,s+T]; L^{p}(\Gamma)))$.

Extending \cite[Propositions 3.10 and 3.11]{debouarddebussche}, we get the following Strichartz estimates.
\begin{thm}\label{stochastic strichartz}
Assume that \ref{H1} is satisfied. 
Let $T>0$, $s \geq 0$, and $(r,p)$ an admissible exponent pair. 
Then\\

(i) 

\vspace{-30pt} 
\begin{equation*}
 \displaystyle{\big\|S_{\beta}(\cdot,s)X_s \big\|_{L^{r}_{\omega} L^{r}_{[s,s+T]} L^{p}_x} \leq c_{r,p} \; T^{\beta/2} \; \|X_s \|_{L^{r}_{\omega} L^{2}_x}}, \ \text{ with } \beta=\tfrac{2}{r}-\tfrac{1}{2} (\tfrac{1}{2} - \tfrac{1}{p}) ;
\end{equation*}

(ii) Let $(\gamma, \delta)$ be another admissible pair such that $\frac{1}{\gamma}=\frac{1-\lambda}{r}, \frac{1}{\delta}=\frac{\lambda}{2}+\frac{1-\lambda}{p} $ for some $\lambda \in [0,1]$. 

\medskip

\hspace{30pt} (ii.1) If $\max \{ \rho , \rho'\}  \leq r$, $\overline{\beta}=\beta (1-\frac{\lambda}{2}) $, and $f$ is predictable, then
\begin{equation*}
\qquad \qquad \qquad  \bigg\| \int_{s}^{\cdot} S_{\beta}(\cdot, \sigma) f(\sigma) d \sigma  \bigg\|_{L^{\rho}_{\omega} L^{r}_{[s,s+T]} L^{p}_x} \leq c_{r,p,\gamma,\delta,\rho} \;T^{\overline{\beta}} \; \big\| f \big\|_{L^{\rho}_{\omega} L^{\gamma'}_{[s,s+T]} L^{\delta'}_x};
\end{equation*}

\hspace{30pt} (ii.2) If $r' \leq \rho \leq r$, then
\begin{equation*}
\qquad \qquad \qquad  \bigg\| \int_{s}^{\cdot} S_{\beta}(\cdot, \sigma) f(\sigma) d \sigma  \bigg\|_{L^{ \rho}_{\omega} L^{\gamma}_{[s,s+T]} L^{\delta}_x} \leq c_{r,p,\gamma,\delta,\rho} \; T^{\overline{\beta}} \; \big\| f \big\|_{L^{\rho}_{\omega} L^{r'}_{[s,s+T]} L^{p'}_x},
\end{equation*}
\end{thm}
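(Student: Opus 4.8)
The plan is to reduce everything to two ingredients—a pathwise (in $\omega$) dispersive bound coming from \ref{H1}, and a negative-moment estimate for Gaussian increments—and then to run a duality/$TT^{\ast}$ argument for (i) and a Hardy--Littlewood--Sobolev plus interpolation argument for (ii). First I would split $X_s=P_pX_s+P_cX_s$ (and likewise $f=P_pf+P_cf$ in (ii)). On the finite-dimensional range of $P_p$ the propagator $\mathrm{e}^{\mathrm{i}\tau\Delta_\Gamma}$ acts by phases on finitely many eigenfunctions, all of which lie in $L^p(\Gamma)$ for every $p$; hence $\|\mathrm{e}^{\mathrm{i}\tau\Delta_\Gamma}P_p\|_{L^2\to L^p}$ is bounded uniformly in $\tau$, and the point-spectrum contributions are controlled by elementary H\"older-in-time bounds producing powers of $T$ dominated by $T^{\beta/2}$ (resp.\ $T^{\overline\beta}$) on the relevant range of $T$, after subdividing $[s,s+T]$ into unit intervals if necessary. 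It then remains to treat $P_c$. From the $L^2$-isometry of Proposition \ref{linear solution}, \ref{H1}, and Riesz--Thorin interpolation I get the pointwise dispersive estimate $\|\mathrm{e}^{\mathrm{i}\tau\Delta_\Gamma}P_cg\|_{L^p}\le C|\tau|^{-(1/2-1/p)}\|g\|_{L^{p'}}$ for $2\le p\le\infty$ and $\tau\neq0$. The probabilistic ingredient is that for $0<a<1$ one has $\mathbb{E}\,|\beta(t)-\beta(\sigma)|^{-a}=c_a\,|t-\sigma|^{-a/2}$; i.e.\ averaging over the Gaussian increment halves the time singularity, which is precisely why the admissible range is enlarged to $\tfrac2r+\tfrac1p>\tfrac12$ and why positive powers of $T$ appear.

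For (i) I would argue by duality. Since the dual of $L^{r}_{\omega} L^{r}_{[s,s+T]}L^{p}_x$ is $L^{r'}_{\omega} L^{r'}_{[s,s+T]}L^{p'}_x$, and since $X_s$ is $\mathcal{F}_s$-measurable while $(\beta(t)-\beta(s))_{t\ge s}$ is independent of $\mathcal{F}_s$, I condition on $\mathcal{F}_s$ and treat $X_s$ as frozen. Testing against $g$ and moving the propagator onto the test function, the estimate reduces to bounding $\big\|\int_s^{s+T}\mathrm{e}^{-\mathrm{i}(\beta(t)-\beta(s))\Delta_\Gamma}g(t)\,dt\big\|_{L^{2}_x}$. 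Expanding the square produces a double time integral of $\langle \mathrm{e}^{\mathrm{i}(\beta(t')-\beta(t))\Delta_\Gamma}P_cg(t),g(t')\rangle$; inserting the pointwise dispersive bound and taking expectation with the Gaussian moment replaces the random kernel by the deterministic $|t-t'|^{-\gamma_p}$, where $\gamma_p=\tfrac12(\tfrac12-\tfrac1p)$. A Young/Hardy--Littlewood--Sobolev estimate on $[s,s+T]$ then closes the bound; strict admissibility guarantees $\gamma_p<2/r$, hence convergence, and the scaling of the kernel on an interval of length $T$ yields exactly $T^{\,2/r-\gamma_p}=T^{\beta}$ for the squared quantity, i.e.\ $T^{\beta/2}$ after taking the square root.

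For (ii) I would start from the pathwise bound $\big\|\int_s^{t}S_\beta(t,\sigma)P_cf(\sigma)\,d\sigma\big\|_{L^p}\le C\int_s^t|\beta(t)-\beta(\sigma)|^{-(1/2-1/p)}\|f(\sigma)\|_{L^{p'}}\,d\sigma$ and then take the mixed $L^\rho_{\omega} L^r_{[s,s+T]}L^p_x$ norm. Here predictability of $f$ is used crucially: since $f(\sigma)$ is $\mathcal{F}_\sigma$-measurable and $\beta(t)-\beta(\sigma)$ is independent of $\mathcal{F}_\sigma$ for $t>\sigma$, conditioning on $\mathcal{F}_\sigma$ turns the Gaussian average of the kernel into $|t-\sigma|^{-\gamma_p}$, reducing the problem to a deterministic Hardy--Littlewood--Sobolev inequality in time. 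I would first establish the two endpoint cases, $\lambda=0$ (where $(\gamma,\delta)=(r,p)$, the full-dispersion estimate) and $\lambda=1$ (where $(\gamma,\delta)=(\infty,2)$, which follows from the $L^2$-isometry alone), and then obtain the general admissible pair $(\gamma,\delta)$ together with the exponent $\overline\beta=\beta(1-\tfrac{\lambda}{2})$ by interpolation in the mixed-norm Bochner spaces. The constraints $\max\{\rho,\rho'\}\le r$ in (ii.1) and $r'\le\rho\le r$ in (ii.2) are exactly what is needed to commute the $L^\rho_{\omega}$ norm with the space--time norms via Minkowski's integral inequality and H\"older in $\omega$.

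I expect the main obstacle to be the joint handling of the randomness and the space--time integrations: unlike the deterministic theory, the dispersive kernel is itself random (through the Brownian increments), so the Gaussian averaging must be interleaved with the $L^p_x$ and $L^r_t$ norms in the correct order, and one must rely on independence of increments together with predictability to legitimately replace $|\beta(t)-\beta(\sigma)|^{-a}$ by its conditional mean. Getting the interpolation to reproduce the precise power $\overline\beta=\beta(1-\tfrac{\lambda}{2})$ while respecting the integrability constraints on $\rho$ is the technical heart of the argument.
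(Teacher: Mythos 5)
Your proposal is correct and follows essentially the same route as the paper: the same splitting $S_\beta=S_\beta P_c+S_\beta P_p$, the same derivation of the pathwise dispersive bound from \ref{H1} via the $L^2$-isometry and Riesz--Thorin, and the same elementary $L^\alpha$-eigenfunction bounds with a harmless power of $T$ for the finite point-spectrum part. The only difference is that where you spell out the $TT^{\ast}$/duality, Gaussian negative-moment, and Hardy--Littlewood--Sobolev steps for the continuous part, the paper simply invokes Propositions 3.7, 3.10 and 3.11 of de Bouard--Debussche, whose proofs are exactly the arguments you describe.
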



\subsection{Well-posedeness of equation (\ref{mild}) on $L^{2}(\Gamma)$} \label{nonlinear equation}

As already mentioned, the idea to solve \eqref{mild} on $L^2(\Gamma)$ is to apply the Banach fixed point theorem on some convenient space, based on the estimates obtained in Theorem \ref{stochastic strichartz}. However, looking at these estimates, one can notice that the smoothing effect is present in space-time, but not in $\Omega$.
For this reason, as in \cite{debouarddebussche}, we first need to consider a truncation in the $L^r_{[s,s+t]}L^p_x$-spaces, as follows:
let $\theta \in C_0^{\infty}(\mathbb{R})$ such that $\theta=1$ on $[0,1]$ and $\theta=0$ on $[2, \infty)$. 
For $X \in L^r_{\rm loc}([s,\infty),L^p_x)$  a.s., $R \geq 1$ and $t \geq 0$, we set
\begin{equation} \label{definition theta}
\theta_R^s(X)(t):=\theta\bigg(\frac{|X|_{L^r_{[s,s+t]}L^p_x}}{R}\bigg).
\end{equation}

\noindent For $s=0$, we set $\theta_R:=\theta^0_R$.\\

\noindent We consider the following truncated version of \eqref{linear stochastic}
\begin{equation} \label{truncated}
\left\{
\begin{array}{lll}
                 dX^R=-\frac{1}{2}\Delta_\Gamma^2 X^R \; dt + \mathrm{i} \Delta_\Gamma X^R \; d \beta(t) + \mathrm{i} \theta_R(X^R)|X^R|^{2\sigma}X^R \; dt, & t\in[0,T]\\ [5pt]
                  X^R(0) =X_0\\
         \end{array}
         \right.
\end{equation}
or in mild form:
\begin{align} \label{mild truncated}
X^R(t)=S_{\beta}(t,0)X_0+\mathrm{i}\int_0^t S_{\beta}(t,s)\theta_R(X^R)(s)|X^R(s)|^{2\sigma}X^R(s) \; ds, \quad t\in [0,T]\; a.s.
\end{align}

Throughout, by $L^r_{\mathcal{P}}(\Omega \times [0,T];L^{p}_x)$ we denote the predictable processes from $L^r(\Omega \times [0,T];L^{p}_x)$.

\begin{thm} \label{solution truncated}
Assume that \ref{H1} is satisfied and let $\sigma<2$, $p=2\sigma+2$ and $(r,p)$ be an admissible exponent pair. 
Then for any $\mathcal{F}_0$-measurable $X_0\in L^r_\omega L^2_x$ there exists a unique solution $X^R$ to \eqref{mild truncated} such that $X^R \in L^r_{\mathcal{P}}(\Omega \times [0,T];L^{p}_x)$ for any $T>0$, and has paths in $C(\mathbb{R}_+,L^2_x)$ a.s. 
Moreover, a.s. $|X^R(t)|_{L^2_x}=|X_0|_{L^2_x}, \; t\geq 0$  and $X^{R}\in L^{\rho}_{[0,T]}L^{q}_x$ a.s. for any $T>0$, $\rho \leq r$ and $(\rho,q)$ admissible.
\end{thm}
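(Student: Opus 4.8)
The plan is to solve \eqref{mild truncated} by a Banach fixed point argument in the space $L^r_{\mathcal P}(\Omega\times[0,T_*];L^p_x)$ for a suitably small \emph{deterministic} $T_*=T_*(R)>0$, and then to reach an arbitrary $[0,T]$ by iteration. Let $\mathcal T$ denote the map sending $X$ to the right-hand side of \eqref{mild truncated}, i.e. $\mathcal T(X)(t)=S_\beta(t,0)X_0+\mathrm i\int_0^t S_\beta(t,s)\,\theta_R(X)(s)\,|X(s)|^{2\sigma}X(s)\,ds$. First I would check that $\mathcal T$ preserves predictability: $S_\beta(t,0)X_0=\mathrm e^{\mathrm i\beta(t)\Delta_\Gamma}X_0$ is adapted and $L^2$-continuous, while the Duhamel term equals $\mathrm e^{\mathrm i\beta(t)\Delta_\Gamma}\int_0^t \mathrm e^{-\mathrm i\beta(s)\Delta_\Gamma}F(s)\,ds$ with $F=\theta_R(X)|X|^{2\sigma}X$, hence is a genuine pathwise Bochner integral of an adapted integrand; there is no It\^o integral to control here, which is precisely the payoff of having resolved the noise into $S_\beta$ in Proposition \ref{linear solution}. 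The free term is handled immediately by Theorem \ref{stochastic strichartz}(i), which gives $\|S_\beta(\cdot,0)X_0\|_{L^r_\omega L^r_{[0,T_*]}L^p_x}\le c_{r,p}T_*^{\beta/2}\|X_0\|_{L^r_\omega L^2_x}<\infty$.

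The heart of the matter is the Duhamel estimate through Theorem \ref{stochastic strichartz}(ii.1) with $\rho=r$. The choice $p=2\sigma+2$ is exactly what makes the nonlinearity act cleanly in space, since $\||u|^{2\sigma}u\|_{L^{p'}_x}=\|u\|_{L^p_x}^{2\sigma+1}$; this forces the target dual exponent $\delta'=p'$, i.e. $\lambda=0$ and $(\gamma,\delta)=(r,p)$, so the gain from the estimate is $T_*^{\bar\beta}=T_*^{\beta}$. It then remains to bound $\big\|\,\theta_R(X)\,\|X\|_{L^p_x}^{2\sigma+1}\big\|_{L^{r'}_{[0,T_*]}}$ in time, and here the truncation is essential: on the support of $\theta_R(X)(\cdot)$ one has $\|X\|_{L^r_{[0,\cdot]}L^p_x}\le 2R$, and a H\"older split in time (isolating $2\sigma$ powers, bounded by $(2R)^{2\sigma}$, against one power in $L^r$ and using $L^r\hookrightarrow L^b$ on the finite interval) produces a factor $T_*^{\,1-p/r}$ together with a constant $C(R)$. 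Since $\sigma<2$, the admissible pair with $p=2\sigma+2$ can be taken with $r\ge p$, so that $\beta>0$ and $1-p/r\ge0$ combine into a net positive power $T_*^{\kappa}$, $\kappa>0$. The same scheme, applied to $\theta_R(X)|X|^{2\sigma}X-\theta_R(Y)|Y|^{2\sigma}Y$ — split into the term where the nonlinearity is differentiated (using $\big||X|^{2\sigma}X-|Y|^{2\sigma}Y\big|\lesssim(|X|^{2\sigma}+|Y|^{2\sigma})|X-Y|$) and the term where the cutoff is differentiated (using that $\theta$ is Lipschitz, so $|\theta_R(X)-\theta_R(Y)|\le \tfrac{C}{R}\|X-Y\|_{L^r_{[0,\cdot]}L^p_x}$) — yields $\|\mathcal T(X)-\mathcal T(Y)\|\le C(R)T_*^{\kappa}\|X-Y\|$ in $L^r_{\mathcal P}(\Omega\times[0,T_*];L^p_x)$. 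Choosing $T_*$ with $C(R)T_*^{\kappa}<\tfrac12$ makes $\mathcal T$ a contraction and delivers the unique local solution. I expect this truncated nonlinear estimate — extracting a positive power of $T$ uniformly in the (arbitrarily large) data, while respecting the absence of any smoothing in $\omega$ — to be the main obstacle.

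Crucially $T_*$ depends only on $R$, which opens the way to globalization. I would first establish the mass conservation $\|X^R(t)\|_{L^2_x}=\|X_0\|_{L^2_x}$ on the local interval: applying It\^o's formula to $t\mapsto\|X^R(t)\|_{L^2_x}^2$ for the It\^o form \eqref{truncated}, the drift $-\tfrac12\Delta_\Gamma^2$ is cancelled by the It\^o correction of the $\mathrm i\Delta_\Gamma\,d\beta$ term (self-adjointness of $\Delta_\Gamma$), the martingale part vanishes because $\langle X^R,\mathrm i\Delta_\Gamma X^R\rangle$ is purely imaginary, and the gauge term $\mathrm i\theta_R|X^R|^{2\sigma}X^R$ contributes nothing since $\mathrm i\int|X^R|^{2\sigma+2}$ is imaginary. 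As this needs $X^R\in D(\Delta_\Gamma^2)$, it must be run on a regularized problem (e.g. spectral truncation or a Yosida approximation of $\Delta_\Gamma$) and passed to the limit via the $L^2$-continuity of the flow. Conservation then guarantees $X^R(kT_*)\in L^r_\omega L^2_x$ with unchanged norm, so the construction restarts at $T_*,2T_*,\dots$; note that the cutoff \eqref{definition theta} is cumulative from $0$, but its history up to $kT_*$ is an already-determined $\mathcal F_{kT_*}$-measurable quantity, so on each new interval it is again a legitimate truncation obeying the same Lipschitz and H\"older bounds, and the contraction constant $C(R)T_*^{\kappa}$ is unchanged. Hence $[0,T]$ is reached in $\lceil T/T_*\rceil$ steps for every $T>0$.

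It remains to record the path continuity and the extra integrability. Continuity of $t\mapsto X^R(t)$ in $L^2_x$ follows from the $L^2$-continuity of the linear part (Proposition \ref{linear solution}) together with the inhomogeneous endpoint estimate — Theorem \ref{stochastic strichartz}(ii) with the target pair $(\infty,2)$ — which maps the integrand $F\in L^{r'}_{[0,T]}L^{p'}_x$ (finite, as just shown) continuously into $L^2_x$. Finally, for any admissible $(\rho,q)$ with $\rho\le r$, I would reinsert the constructed $X^R$ into the mild identity \eqref{mild truncated} and apply Theorem \ref{stochastic strichartz}(i) to the free part and (ii.1) to the Duhamel part with target $(\rho,q)$; since the right-hand sides are controlled by $\|X_0\|_{L^r_\omega L^2_x}$ and, through the truncation, by the already established $L^r_{[0,T]}L^p_x$ bound, this yields $X^R\in L^{\rho}_{[0,T]}L^q_x$ a.s., completing the proof.
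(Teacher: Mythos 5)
Your fixed-point scheme (contraction in $L^r_{\mathcal P}(\Omega\times[0,T_*];L^p_x)$ via Theorem \ref{stochastic strichartz}(ii.1) with $\lambda=0$, the identity $\||u|^{2\sigma}u\|_{L^{p'}_x}=\|u\|_{L^p_x}^{2\sigma+1}$, the H\"older split in time yielding $(2R)^{2\sigma}T_*^{1-p/r}$, and iteration on intervals of length depending only on $R$) is essentially the paper's Step~1, and your treatment of the extra integrability matches Step~2. The genuine gap is in your conservation-of-mass step. You propose to apply It\^o's formula to $\|X^R(t)\|_{L^2_x}^2$ on a regularized problem (``spectral truncation or a Yosida approximation'') and pass to the limit. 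This is precisely the route of de Bouard--Debussche on $\mathbb{R}$, and the paper's Remark \ref{rem 2.9} points out that it does not transfer to $\Gamma$: the regularization there is a cutoff in Fourier space, whose analogue $1_{[-n,n]}(-\Delta_\Gamma)$ on a quantum graph is not known to be bounded on $L^p_x$ for $p\neq 2$, so the truncated nonlinearity cannot be controlled along the approximation and the limit passage is not justified. You have deferred exactly the obstacle that forces a different argument.

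The paper's resolution is simpler and purely pathwise, and it is worth seeing how it also subsumes your continuity step. One first checks (using the admissible pair $(2\sigma+1,2(2\sigma+1))$, which is where $\sigma<2$ enters) that $s\mapsto S_\beta(0,s)\theta_R(X^R)|X^R|^{2\sigma}X^R(s)$ is Bochner integrable in $L^2_x$, so that $Y(t):=S_\beta(0,t)X^R(t)=X_0+\mathrm i\int_0^t S_\beta(0,s)\theta_R(X^R)|X^R|^{2\sigma}X^R(s)\,ds$ has absolutely continuous $L^2_x$-valued trajectories; continuity of $X^R=S_\beta(\cdot,0)Y$ then follows from strong continuity of the group (your appeal to the endpoint Strichartz bound only gives an $L^\infty_tL^2_x$ estimate, not continuity). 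For conservation one then computes, for a.e.\ $t$,
\begin{equation*}
\frac{d}{dt}\|Y(t)\|_{L^2_x}^2
=2\,\Re\big\{\mathrm i\,\langle \theta_R(X^R)|X^R|^{2\sigma}X^R(t),\,X^R(t)\rangle_{L^2_x}\big\}=0,
\end{equation*}
using unitarity of $S_\beta(0,t)$; no It\^o formula, no $D(\Delta_\Gamma^2)$ regularity, and no regularization are needed. You should replace your Step on conservation with this argument (or supply a genuinely workable regularization on $\Gamma$, which is not available by the methods you name).
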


\begin{rem} \label{rem 2.9}
In \cite{debouarddebussche}, where $\Gamma = \mathbb{R}$, the proof of Theorem \ref{solution truncated} is based on a regularization of the solution to \eqref{truncated} obtained at a first stage by a fixed point argument on $L^r_{\mathcal{P}}(\Omega \times [0,T];L^{p}_x)$, using a cutoff in the Fourier space.
Since such a regularization cannot be performed on $\Gamma$, we have to use different arguments for the proof, which turn out to be simpler and more general; see Section 3.
\end{rem}

Based on Theorem \ref{solution truncated}, the arguments from \cite{debouarddebussche}, Section 5, work without any change to get the $L^{2}(\Gamma)$-well-posedness of \eqref{mild}.
Although we resume only to the statement (see Theorem \ref{wellposednessL2} below) and skip its proof, let us briefly explain how it can be worked out.
First of all, uniqueness follows by Theorem \ref{solution truncated}.
Then, using again Theorem \ref{solution truncated}, let $X^{R}_n$, $n\geq 0$ be the global solutions to \eqref{mild truncated}, obtained recursively for initial data $X_{\tau_R^{n}}$ where $\tau_R^{0}=0$ and
\begin{equation*}
\tau_R^{n}:=\inf\{t\geq0 : |X^{R}_n|_{L^{r}_{[0, t]}L^{p}_x}\geq R \} \;\; \mbox{ for all } n\geq 0.
\end{equation*}
By superposing $X_{\tau_R^{n}}1_{[0,\tau_R^{n}]}$ for all $n\geq 0$, we get a strong Markov solution to \eqref{mild} on $[0,\tau)$ where $\tau=\sum\limits_{n=0}^{\infty}\tau_R^{n}$.
To make sure that $\tau =\infty$ a.s., it is sufficient to show that there exists $\varepsilon>0$ s.t. $\lim\limits_{N}\lim\limits_{M}\mathbb{P}(\tau_R^{n}\leq \varepsilon \mbox{ for all } N\leq n\leq M )=0$.
But $\mathbb{P}(\tau_R^{n}\leq \varepsilon \mbox{ for all } N\leq n\leq M)=\mathbb{E}\big\{[\prod\limits_{n=N}^{M-1} 1_{\tau_R^{n}\leq \varepsilon}]\; \mathbb{P}(\tau_R^{M}\leq \varepsilon | \mathcal{F}_{\tau_R^{0}+\cdots+\tau_R^{M-1}})\big\}$.
Hence, it is sufficient to show that there exist $\varepsilon, R >0$ s.t. $\mathbb{P}(\tau_R^{M}\leq \varepsilon | \mathcal{F}_{\tau_R^{0}+\cdots+\tau_R^{M-1}})\leq \frac{1}{2}$ for all $M$, and this can indeed be obtained by the Strichartz estimates in Theorem \ref{stochastic strichartz}, the conservation of the $L^{2}$-norm obtained in Theorem \ref{solution truncated}, and the strong Markov property; for more details see \cite[Lemma 5.1]{debouarddebussche} and the discussion right after.

Consequently, we obtain:
\begin{thm} \label{wellposednessL2}
Assume that \ref{H1} is satisfied, and let $\sigma<2$ and $r$ such that $2\sigma+2\leq r < \frac{4(\sigma+1)}{\sigma}$. 
Then for any $X_0\in L^2(\Gamma)$, there exists a unique solution to \eqref{mild} which has paths in $C([0,\infty),L^2(\Gamma)) \, a.s.$.
In addition, $ |X(t)|_{L^2_x}=|X_0|_{L^2_x}, \; t\geq 0$ and $X\in L^{\rho}_{[0,T]}L^{q}_x$ a.s. for any $T>0$, $\rho \leq r$ and $(\rho,q)$ admissible.
\end{thm}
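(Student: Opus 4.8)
The plan is to obtain Theorem~\ref{wellposednessL2} as a consequence of Theorem~\ref{solution truncated} by a standard gluing-of-local-solutions argument, exactly following the roadmap sketched in the paragraph preceding the statement (which in turn imports \cite[Section 5]{debouarddebussche}). Since the truncated equation \eqref{mild truncated} is globally solvable, the only genuine content is to remove the truncation by showing that the natural blow-up time is almost surely infinite; the conservation of the $L^2$-norm from Theorem~\ref{solution truncated} is what makes this feasible, since it prevents the $L^2$-mass from escaping.

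First I would set up the iteration. Using Theorem~\ref{solution truncated}, construct global solutions $X^R_n$ of the truncated problem recursively, restarting at the initial datum $X_{\tau_R^n}$, where $\tau_R^0=0$ and $\tau_R^n:=\inf\{t\geq 0:\ |X^R_n|_{L^r_{[0,t]}L^p_x}\geq R\}$. On $[0,\tau_R^n]$ the cutoff $\theta_R$ equals $1$, so $X^R_n$ solves the genuine (untruncated) mild equation \eqref{mild} there. Superposing the pieces $X_{\tau_R^n}\mathbbm{1}_{[0,\tau_R^n]}$ yields a solution to \eqref{mild} on the random interval $[0,\tau)$ with $\tau=\sum_{n\geq 0}\tau_R^n$. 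The strong Markov property (which holds because the construction restarts from $\mathcal{F}_{\tau_R^0+\cdots+\tau_R^{n-1}}$-measurable data and the driving Brownian increments after a stopping time are independent) gives that the $\tau_R^n$ are, conditionally, identically distributed copies governed by the same law. Uniqueness on $[0,\tau)$ is inherited directly from the uniqueness in Theorem~\ref{solution truncated}, since two solutions agree up to each $\tau_R^n$.

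The crux is then to prove $\tau=\infty$ a.s. As indicated in the text, it suffices to exhibit $\varepsilon,R>0$ with $\mathbb{P}\big(\tau_R^{M}\leq\varepsilon\,\big|\,\mathcal{F}_{\tau_R^0+\cdots+\tau_R^{M-1}}\big)\leq\tfrac12$ for all $M$, because then the product bound $\mathbb{P}(\tau_R^n\leq\varepsilon\text{ for }N\leq n\leq M)=\mathbb{E}\big\{[\prod_{n=N}^{M-1}\mathbbm{1}_{\tau_R^n\leq\varepsilon}]\,\mathbb{P}(\tau_R^M\leq\varepsilon\mid\mathcal{F}_{\cdots})\big\}\leq 2^{-(M-N)}$ forces $\lim_N\lim_M\mathbb{P}(\tau_R^n\leq\varepsilon,\ N\leq n\leq M)=0$, whence the increments cannot all be small and $\tau$ diverges. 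The conditional estimate is where the Strichartz machinery enters: on $\{\tau_R^M\leq\varepsilon\}$ one has $|X|_{L^r_{[0,\tau_R^M]}L^p_x}\geq R$, while applying Theorem~\ref{stochastic strichartz}(i) to the linear part and Theorem~\ref{stochastic strichartz}(ii) to the Duhamel term of \eqref{mild}, together with the conserved identity $|X(t)|_{L^2_x}=|X_0|_{L^2_x}$, bounds $|X|_{L^r_{[0,\varepsilon]}L^p_x}$ by $c\,\varepsilon^{\alpha}|X_0|_{L^2_x}+c\,\varepsilon^{\overline\beta}|X|^{2\sigma+1}_{L^r_{[0,\varepsilon]}L^p_x}$ for suitable positive powers $\alpha,\overline\beta$. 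The hypothesis $2\sigma+2\leq r<\tfrac{4(\sigma+1)}{\sigma}$ is precisely what guarantees the relevant exponents are admissible and the time-powers positive, so that a Chebyshev/Markov inequality argument makes the conditional probability small for $\varepsilon$ chosen appropriately relative to $R$.

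The main obstacle I expect is the conditional Strichartz estimate: the self-improving fixed-point-type bound must be made uniform in $M$ via the strong Markov property, and this requires care to verify that the Strichartz constants and the $L^r_\omega L^2_x$-norm of the restart data are controlled independently of $n$ (here the $L^2$-conservation is essential, as it keeps $|X_{\tau_R^n}|_{L^2_x}=|X_0|_{L^2_x}$ fixed) and that the nonlinear term's exponents stay in the admissible range dictated by the constraint on $r$. Since the text states explicitly that ``the arguments from \cite{debouarddebussche}, Section 5, work without any change,'' the honest route is to reduce each of these points to its counterpart there, the only difference being that the dispersive input is now the abstract hypothesis \ref{H1} rather than the explicit decay on $\mathbb{R}$; but \ref{H1} was already used to prove Theorem~\ref{stochastic strichartz}, so no additional structure of $\Gamma$ is needed beyond what those estimates encode.
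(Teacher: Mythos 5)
Your proposal is correct and follows essentially the same route as the paper, which itself only sketches this argument (gluing the global truncated solutions from Theorem \ref{solution truncated}, then showing $\tau=\sum_n\tau_R^n=\infty$ a.s.\ via the conditional bound $\mathbb{P}(\tau_R^M\leq\varepsilon\mid\mathcal{F}_{\tau_R^0+\cdots+\tau_R^{M-1}})\leq\tfrac12$ obtained from the Strichartz estimates, the $L^2$-conservation, and the strong Markov property) and delegates the details to Section 5 of the de Bouard--Debussche reference. The extra detail you supply — the $2^{-(M-N)}$ product bound and the self-improving Strichartz inequality behind the conditional estimate — is consistent with that reference and with the paper's sketch.
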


\subsection{Well-posedeness of \eqref{approximate} for truncated nonlinearities and convergence when $\varepsilon \to 0$} \label{convergence of approximate solutions}

In this section our aim is to extend the results from \cite{Ma06} to quantum graphs. More precisely, let $\theta$ be the function from the beginning of Subsection \ref{nonlinear equation}, $\sigma \geq \frac{1}{2}, R > 0$ and consider the function $F: \mathbb{C} \rightarrow \mathbb{C}$ given by
\begin{equation*}
F(x):= |x|^{2 \sigma} x \theta(|x|^2/R),
\end{equation*}
which is from $C^1_c $ with Lipschitz derivative. Then, the equations we deal with in this subsection are the truncated versions of \eqref{approximate}, namely

\begin{equation} \label{eq 2.13}
\begin{cases}
\mathrm{i} \dfrac{du}{dt} + \dfrac{1}{\varepsilon} m\bigg(\dfrac{t}{\varepsilon^2}\bigg) \Delta_{\Gamma} u + F(u) =0, & x \in \Gamma, \ t>0\\
u(0)=u_0, & x \in \Gamma
\end{cases},
\end{equation}
with the corresponding limiting equation
\begin{equation} \label{eq 2.14}
\begin{cases}
u dt = \mathrm{i} \Delta_{\Gamma}u d\beta - \dfrac{1}{2} \Delta^2_\Gamma u dt + \mathrm{i} F(u) dt, & x \in \Gamma, \ t>0\\
u(0)=u_0, & x \in \Gamma
\end{cases},
\end{equation}

We recall that we perform such a truncation because there are no Strichartz estimates available for general varying dispersion as for the white noise case from Subsection \ref{linear equation and strichartz}; as a consequence, \eqref{approximate} will be solved only locally. 

First, we prove well-posedness in $C([0,T],L^2(\Gamma))$ and in $C([0,T],D(\mathcal{E}))$ for the equation in mild form
\begin{equation} \label{mildform}
u_n(t)=S_{n}(t,0) u_0 + \mathrm{i} \int_0^t S_{n}(t,s) F(u_n)(s) \, ds,
\end{equation}

\noindent for any $n \in C([0,T],\mathbb{R})$, where $S_n(t,s):=\mathrm{e}^{\mathrm{i} [n(t)-n(s)]\Delta_{\Gamma}}$.

We need to consider the following stability of $D(\mathcal{E})$ under nonlinearity:
\begin{enumerate}
\item[\mylabel{H2}{\bf (H.2)}]
$
\!
\begin{aligned}[t]
\text{If } u \in D(\mathcal{E}), \text{ then } F(u) \in D(\mathcal{E}).
\end{aligned}
$ 
\end{enumerate}

\begin{thm}\label{well-posedness}
Let $T>0$ and $n \in C([0,T],\mathbb{R})$. 
Then, for any initial data $u_0 \in L^2(\Gamma)$, there exists a unique solution $u_n \in C([0,T],L^2(\Gamma))$ to \eqref{mildform}.

Moreover, if $ u_0 \in D(\mathcal{E})$ and $F$ satisfies \ref{H2}, then $u_n \in C([0,T],D(\mathcal{E}))$.

\end{thm}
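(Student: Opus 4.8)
The plan is to solve \eqref{mildform} by a Banach fixed point argument, first in $C([0,T],L^2(\Gamma))$ and then, under the additional hypotheses, in $C([0,T],D(\mathcal{E}))$, identifying the two solutions through the uniqueness in $L^2(\Gamma)$. The two structural facts I rely on are that $\{S_n(t,s)\}_{s\le t}$ is a two-parameter family of isometries (being given by the unitary group $\mathrm{e}^{\mathrm{i}\tau\Delta_\Gamma}$, strongly continuous in $\tau$), and that $F$, being $C^1_c$ with Lipschitz derivative and $F(0)=0$, is globally Lipschitz on $\mathbb{C}$ with bounded first derivatives.

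For the $L^2$-part I would set $\Phi(u)(t):=S_n(t,0)u_0+\mathrm{i}\int_0^t S_n(t,s)F(u(s))\,ds$ on $C([0,T],L^2(\Gamma))$. Continuity of $\Phi(u)$ follows from strong continuity of the group together with the factorization $S_n(t,s)=S_n(t,0)S_n(0,s)$, which rewrites the Duhamel term as $S_n(t,0)\int_0^t S_n(0,s)F(u(s))\,ds$, a strongly continuous family of isometries applied to a continuous path. Since $F$ is globally Lipschitz with $F(0)=0$, the Nemytskii map $u\mapsto F(u)$ is globally Lipschitz from $L^2(\Gamma)$ to $L^2(\Gamma)$; combined with the $L^2$-isometry of $S_n(t,s)$ this gives
\begin{equation*}
\sup_{t\le T}\,|\Phi(u)(t)-\Phi(v)(t)|_{L^2}\le L\int_0^T |u(s)-v(s)|_{L^2}\,ds.
\end{equation*}
Iterating, the $k$-th iterate $\Phi^k$ obeys a bound with constant $(LT)^k/k!$, so $\Phi^k$ is a contraction for $k$ large and the Banach fixed point theorem yields the unique $u_n\in C([0,T],L^2(\Gamma))$.

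For the $D(\mathcal{E})$-part the first step is to observe that $\mathrm{e}^{\mathrm{i}\tau\Delta_\Gamma}$ restricts to an isometry on $(D(\mathcal{E}),\|\cdot\|_{D(\mathcal{E})})$: writing $\|f\|_{D(\mathcal{E})}^2=|(M-\Delta_\Gamma)^{1/2}f|_{L^2}^2$ and using that $\mathrm{e}^{\mathrm{i}\tau\Delta_\Gamma}$ commutes with $(M-\Delta_\Gamma)^{1/2}$ by the spectral theorem gives $\|\mathrm{e}^{\mathrm{i}\tau\Delta_\Gamma}f\|_{D(\mathcal{E})}=\|f\|_{D(\mathcal{E})}$, and the group is again strongly continuous on $D(\mathcal{E})$. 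The second step is the nonlinear estimate: using \ref{H2} to guarantee $F(w)\in D(\mathcal{E})$, the norm equivalence of Proposition \ref{equivnorms}, and the one-dimensional embedding $D(\mathcal{E})\hookrightarrow W^{1,2}(\Gamma)\hookrightarrow L^\infty(\Gamma)$, I control $(F(w))'=\partial_zF(w)\,w'+\partial_{\bar z}F(w)\,\overline{w}'$ in $L^2$ by the boundedness of $\partial F$, obtaining the linear bound $\|F(w)\|_{D(\mathcal{E})}\le C\|w\|_{D(\mathcal{E})}$; for the difference I split the chain-rule terms and use the Lipschitz continuity of $\partial F$ against the $L^\infty$-control of $u-v$, which shows $F$ is Lipschitz on bounded subsets of $D(\mathcal{E})$. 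With these two ingredients the same scheme runs in $C([0,T],D(\mathcal{E}))$ locally in time, and the \emph{linear} bound $\|F(w)\|_{D(\mathcal{E})}\le C\|w\|_{D(\mathcal{E})}$ feeds a Grönwall estimate that precludes finite-time blow up of the $D(\mathcal{E})$-norm, extending the solution to all of $[0,T]$. Since a $D(\mathcal{E})$-solution is in particular an $L^2$-solution, uniqueness in $L^2(\Gamma)$ forces it to coincide with $u_n$, whence $u_n\in C([0,T],D(\mathcal{E}))$.

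The main obstacle is the nonlinear estimate in $D(\mathcal{E})$, and this is precisely why \ref{H2} is imposed: $F(u)$ must respect the vertex coupling conditions encoded in $D(\mathcal{E})$ (the Dirichlet projections $P_{D,v}$), which is not automatic from $F(u)\in W^{1,2}(\Gamma)$; the $W^{1,2}$-estimate itself is routine once the non-holomorphic chain rule is handled. A second technical care is that no $L^\infty$-bound on $u'$ is available, so the Lipschitz-in-$D(\mathcal{E})$ estimate must be organized to use only the $L^\infty$-control of the difference $u-v$ coming from the Sobolev embedding together with the $L^2$-control of the derivatives.
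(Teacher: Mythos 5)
Your proposal is correct, and the $L^2$ part coincides with the paper's argument up to cosmetics (the paper contracts on a small time interval with $T$ depending only on $F$ and iterates, while you contract an iterate $\Phi^k$ via the $(LT)^k/k!$ bound; both are standard). For the $D(\mathcal{E})$ part, however, you take a genuinely different route. You run a second Banach fixed point directly in $C([0,T_0],D(\mathcal{E}))$, which forces you to prove that the Nemytskii map is locally Lipschitz on $D(\mathcal{E})$ — this needs the non-holomorphic chain rule, the Lipschitz continuity of $\partial F$, and the embedding $D(\mathcal{E})\hookrightarrow L^\infty(\Gamma)$ — and then you remove the finite-time blow-up alternative by Gr\"onwall using the linear bound $\|F(w)\|_{D(\mathcal{E})}\le C\|w\|_{D(\mathcal{E})}$, finally identifying the solution with $u_n$ by $L^2$-uniqueness. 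The paper instead never sets up a contraction in $D(\mathcal{E})$: it propagates the bound $v^{k+1}(t)\le v^0+C\int_0^t v^k(s)\,ds$ along the Picard iterates of the \emph{$L^2$} fixed point, extracts a weakly convergent subsequence in the Hilbert space $D(\mathcal{E})$ for each fixed $t$, identifies the weak limit with $u_n(t)$ via the $L^2$ convergence, and gets the uniform estimate \eqref{exponential estimate} by weak lower semicontinuity of the norm; continuity in $D(\mathcal{E})$ is then a separate step via strong continuity of the group on $D(\mathcal{E})$ and dominated convergence. The paper's route is more economical in hypotheses at this stage — only the linear growth bound \eqref{nonlinearity estimate} is needed, not any $D(\mathcal{E})$-Lipschitz estimate (which the paper only establishes later, for Theorem \ref{convergence}) — whereas your route is the more classical one and delivers local well-posedness and continuous dependence in $D(\mathcal{E})$ directly, with continuity of the solution coming for free from the ambient space of the contraction. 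Both of the ingredients you flag as the real obstacles (that \ref{H2} is exactly what guarantees $F(u)$ respects the Dirichlet vertex conditions, and that the difference estimate must be organized to put $u-v$ in $L^\infty$ and the derivatives in $L^2$) are correctly identified and handled.
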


\begin{rem}
In particular, Theorem \ref{well-posedness} applies to \eqref{eq 2.13} and \eqref{eq 2.14} for $n(t)=\int_0^t \frac{1}{\varepsilon} m \big( \frac{s}{\varepsilon^2} \big) \, ds$, and for $n(t)=\beta(t)$, respectively, $t \geq 0$.
\end{rem}

The main result of this section is the following. 

\begin{thm}\label{convergence} If \ref{H0} and \ref{H2} hold, then the mild solutions $(u_\varepsilon)_{\varepsilon>0}$ of (\ref{eq 2.13}) converge in law to the mild solution $u$ of (\ref{eq 2.14}) when $\varepsilon \to 0$, on $C([0,T],D(\mathcal{E}))$.
\end{thm}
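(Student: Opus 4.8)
The plan is to deduce the convergence in law from a purely deterministic continuity property of the solution map, combined with hypothesis \ref{H0} and the continuous mapping theorem. For $n \in C([0,T],\mathbb{R})$ let $\Phi(n) := u_n$ denote the unique mild solution of \eqref{mildform} provided by Theorem \ref{well-posedness}; under the standing assumptions $u_0 \in D(\mathcal{E})$ and \ref{H2}, this makes $\Phi$ a well-defined map $C([0,T],\mathbb{R}) \to C([0,T],D(\mathcal{E}))$. Writing $n_\varepsilon(t) := \int_0^t \tfrac{1}{\varepsilon}m(\tfrac{s}{\varepsilon^2})\,ds$ (which has continuous paths), the remark following Theorem \ref{well-posedness} identifies, pathwise, $u_\varepsilon = \Phi(n_\varepsilon)$ and $u = \Phi(\beta)$. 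Since \ref{H0} asserts precisely that $n_\varepsilon \to \beta$ in law on $C([0,T],\mathbb{R})$, it suffices to prove that $\Phi$ is continuous; the theorem then follows by the continuous mapping theorem, giving $\Phi(n_\varepsilon) \Rightarrow \Phi(\beta)$ in law on $C([0,T],D(\mathcal{E}))$.

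The heart of the matter is thus the claim: if $n_k \to n$ uniformly on $[0,T]$, then $u_{n_k} \to u_n$ in $C([0,T],D(\mathcal{E}))$. Two structural facts drive the argument. First, since $S_n(t,s) = \mathrm{e}^{\mathrm{i}[n(t)-n(s)]\Delta_\Gamma}$ is a function of the self-adjoint operator $\Delta_\Gamma$, it commutes with $(M-\Delta_\Gamma)^{1/2}$ and is therefore a linear \emph{isometry} of $(D(\mathcal{E}),\|\cdot\|_{D(\mathcal{E})})$, by Proposition \ref{equivnorms}, as well as of $L^2(\Gamma)$. Second, since $\Gamma$ is one-dimensional we have $D(\mathcal{E}) \hookrightarrow L^\infty(\Gamma)$, so that $F$, being $C^1_c$ with Lipschitz derivative, induces a map that is Lipschitz on bounded subsets of $D(\mathcal{E})$, as established in the proof of Theorem \ref{well-posedness}. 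Combining the isometry property with the linear growth of $F$ and Gronwall's inequality yields an a priori bound $\sup_k \|u_{n_k}\|_{C([0,T],D(\mathcal{E}))} \leq K$ that is uniform in $k$.

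Subtracting the mild formulas for $u_{n_k}$ and $u_n$ and regrouping gives
\begin{align*}
u_{n_k}(t) - u_n(t) = {}& [S_{n_k}(t,0)-S_n(t,0)]u_0 + \mathrm{i}\int_0^t [S_{n_k}(t,s)-S_n(t,s)]F(u_n(s))\,ds \\
& + \mathrm{i}\int_0^t S_{n_k}(t,s)[F(u_{n_k}(s))-F(u_n(s))]\,ds .
\end{align*}
The last term is controlled in $D(\mathcal{E})$ by $L_K \int_0^t \|u_{n_k}(s)-u_n(s)\|_{D(\mathcal{E})}\,ds$, using the $D(\mathcal{E})$-isometry of $S_{n_k}$ and the local Lipschitz bound for $F$; this is the term Gronwall will absorb. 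The first two terms are the genuine difficulty and carry the entire dependence on $\|n_k-n\|_\infty$.

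This is the step I expect to be the main obstacle. A naive quantitative estimate of $\|[\mathrm{e}^{\mathrm{i}a\Delta_\Gamma}-\mathrm{e}^{\mathrm{i}b\Delta_\Gamma}]v\|_{D(\mathcal{E})}$ by $|a-b|^\alpha$ would cost a factor $(-\Delta_\Gamma)^{\alpha}$ of extra regularity on $v$, which is unavailable since $u_0$ and $F(u_n(s))$ live only in $D(\mathcal{E})$. The remedy is to avoid any rate altogether: the unitary group $(\mathrm{e}^{\mathrm{i}a\Delta_\Gamma})_{a\in\mathbb{R}}$ restricts to a $C_0$-group on the Hilbert space $D(\mathcal{E})$ (being unitarily conjugate through $(M-\Delta_\Gamma)^{1/2}$ to the $L^2$ group), so that $a \mapsto \mathrm{e}^{\mathrm{i}a\Delta_\Gamma}w$ is continuous \emph{uniformly} for $w$ in a compact subset of $D(\mathcal{E})$ and $a$ bounded. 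Applying this with the compact sets $\mathcal{K}_0 = \{S_n(t,0)u_0 : t\in[0,T]\}$ and $\mathcal{K}_1 = \{F(u_n(s)) : s\in[0,T]\}$ (compact as continuous images of $[0,T]$), and writing $S_{n_k}(t,s)-S_n(t,s) = S_n(t,s)[\mathrm{e}^{\mathrm{i}\delta_{k}(s,t)\Delta_\Gamma}-I]$ with $|\delta_k(s,t)| \leq 2\|n_k-n\|_\infty$, one bounds the first two terms by a modulus of continuity $\omega(\|n_k-n\|_\infty)$ with $\omega(\delta)\to 0$ as $\delta\to 0$, uniformly in $t$. Gronwall's inequality then gives $\sup_t \|u_{n_k}(t)-u_n(t)\|_{D(\mathcal{E})} \leq \omega(\|n_k-n\|_\infty)\,e^{L_K T} \to 0$, establishing the continuity of $\Phi$ and hence the theorem. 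The analogous statement on $C([0,T],L^2(\Gamma))$ is entirely similar and slightly simpler, as \ref{H2} is not needed there.
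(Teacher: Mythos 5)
Your proposal is correct and follows essentially the same route as the paper: reduce via \ref{H0} and the continuous mapping theorem to the deterministic continuity of $n \mapsto u_n$ from $C([0,T],\mathbb{R})$ to $C([0,T],D(\mathcal{E}))$, decompose $u_{n_k}-u_n$ into a group-difference part and a nonlinearity-difference part, absorb the latter by Gronwall using the local Lipschitz property of $F$ on $D(\mathcal{E})$, and show the former tends to zero without any convergence rate. The only (inessential) divergence is in that last step, where the paper integrates $2\wedge|\lambda|\,|\delta_k|$ against the spectral measure of $F(u_n(s))$ and applies dominated convergence, whereas you invoke uniform strong continuity of the $C_0$-group $(\mathrm{e}^{\mathrm{i}a\Delta_\Gamma})_a$ on the compact sets $\{S_n(t,0)u_0\}$ and $\{F(u_n(s))\}$ in $D(\mathcal{E})$ — two equivalent ways of expressing the same rate-free continuity.
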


\subsection{Well-posedness in $D(\mathcal{E}_{\star})$ for full nonlinearity and convergence of the approximate solutions: the case of star-graphs} \label{well posedness star-graph}

Concerning the well-posedeness of \eqref{mild} in the energy domain $D(\mathcal{E})$ given by \eqref{formdomain}, let us mention once again that the dispersion destroys the conservation in time of the energy $\mathcal{E}(X(t),X(t))-\frac{1}{2 \sigma+2} \| X(t) \|^{2 \sigma+2}_{L^{2 \sigma+2}(\Gamma)}$, hence an approach as in e.g. \cite[Theorem C]{grecuignatjphysa} (more precisely, see Proposition 3.7) or \cite{AdNo09} is not suitable.  
On $\mathbb{R}$, the well-posedeness in $W^{1,2}(\mathbb{R})$ has been obtained in \cite{debouarddebussche} based on the Strichartz estimates and the fact that the first derivate operator commutes with the deterministic Schr\"odinger group, i.e. $\frac{d}{dx}e^{it\Delta}=e^{it\Delta}\frac{d}{dx}$.  
However, the situation changes drastically in our case because such a commuting property simply does not hold.
To overcome this problem, the strategy is to employ spectral arguments and explicit heat kernels formulas in order to be able to partially commute $\frac{d}{dx}$ with $e^{it\Delta_\Gamma}$, with the price of a reminder part which hopefully is also smoothing.
Since it may be too difficult to get such formulas for quantum graphs in general, we restrict our analysis to the case of star graphs, with the emphasis that our strategy is a general one and could be applied to other types of graphs. So, our main concern in this section is to prove well-posedness of equation \eqref{mild} in the energy domain $D(\mathcal{E}_{\star})$ described in \eqref{formdomain}, where $\mathcal{E}_{\star}$ denotes the form associated to a self-adjoint Hamiltonian on a star-graph, as well as the convergence of the solutions of \eqref{approximate}, when $\varepsilon \to 0$. We recall that a star-graph $\Gamma_{\star}$, is a metric graph that consists of a finite number $n \in \mathbb{N}^*$ of infinite length edges attached to a single common vertex, with each edge being identified with a copy of the positive real axis, $[0, \infty)$. 
\begin{figure}[H]
\centering
\includegraphics[scale=0.11]{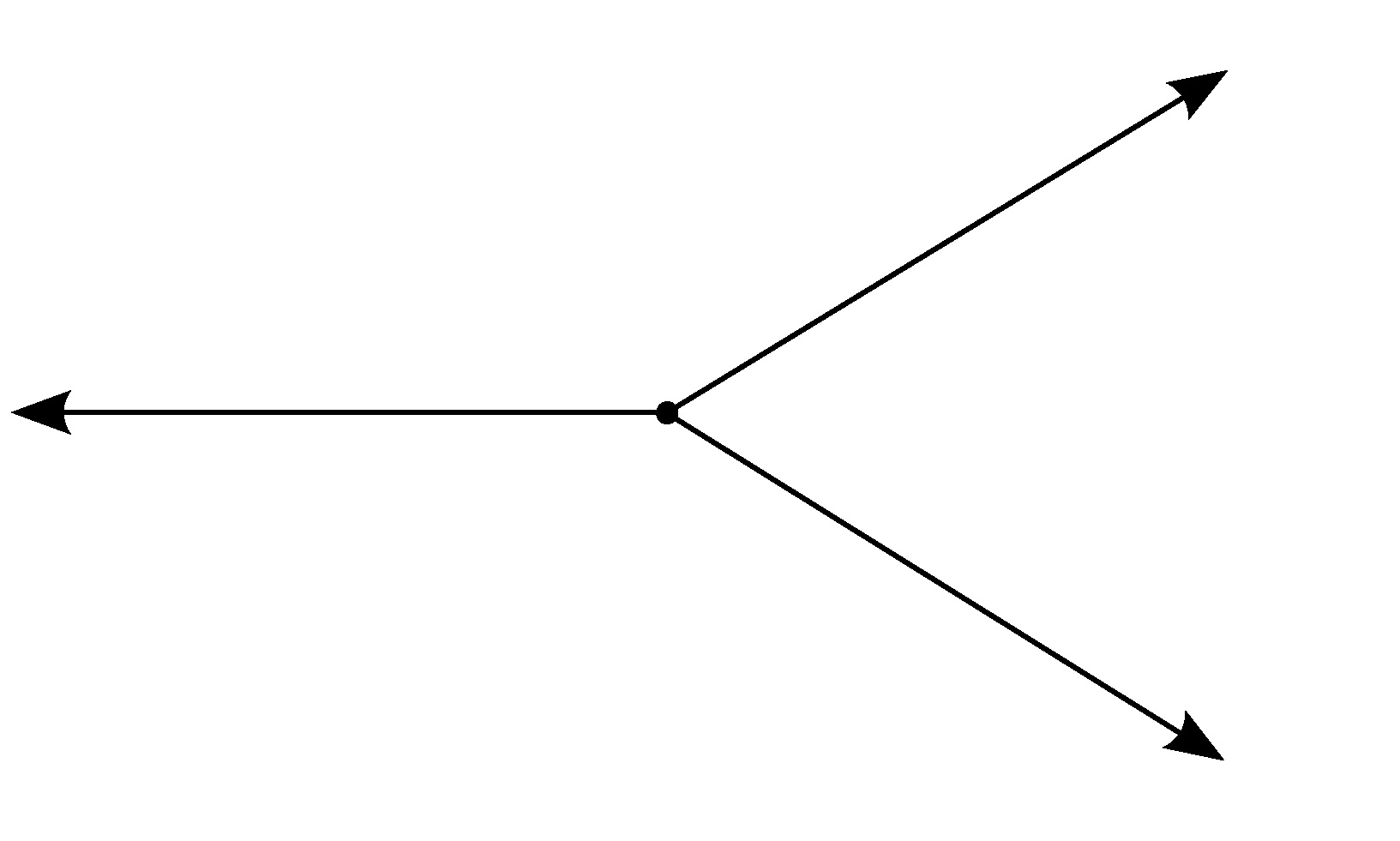} \caption{\small{Star-graph, $\Gamma_{\star}$. }}\label{stargraph} 
\end{figure}
We denote by $H_{\star}:=-\Delta_{\Gamma_{\star}}(A,B)$ the Hamiltonian on $\Gamma_{\star}$, with $A$ and $B$ satisfying the hypotheses of Theorem \ref{self-adjointconditions}. 

\begin{prop} \label{prop 2.13} Condition \ref{H1} is fulfilled for $H_{\star}$. In particular, Theorem \ref{wellposednessL2} applies.
\end{prop}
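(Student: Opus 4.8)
The statement asks me to verify, for the star-graph Hamiltonian $H_\star=-\Delta_{\Gamma_\star}(A,B)$, both clauses of \ref{H1}: the finiteness of the point spectrum and the $L^1$--$L^\infty$ dispersive bound \eqref{H1eq} for the continuous part (the closing ``in particular'' being then immediate from Theorem \ref{wellposednessL2}). The finiteness clause I would settle by elementary spectral analysis. Since every edge of $\Gamma_\star$ is a half-line, $-\Delta_{\Gamma_\star}$ has essential spectrum $[0,\infty)$ and no non-negative eigenvalue: an $L^2$-eigenfunction for $\lambda\ge 0$ would be, on each edge, a combination of $\mathrm{e}^{\pm\mathrm{i}\sqrt\lambda x}$ (if $\lambda>0$) or of $1,x$ (if $\lambda=0$), none of which is square-integrable on $[0,\infty)$. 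A negative eigenvalue $-\kappa^2$, $\kappa>0$, forces $f_j(x)=c_j\,\mathrm{e}^{-\kappa x}$ on each edge, so that $\overline{f_v}=(c_j)_j$ and $\overline{f'_v}=(-\kappa c_j)_j$; the vertex condition $A\overline{f_v}+B\overline{f'_v}=0$ of Theorem \ref{self-adjointconditions} becomes $(A-\kappa B)\vec c=0$, which admits a nontrivial solution iff $\det(A-\kappa B)=0$. This is a polynomial in $\kappa$ of degree at most $n$, not identically zero because $H_\star$ is bounded below (cf. Proposition \ref{equivnorms}), hence with finitely many roots, each of multiplicity $\dim\ker(A-\kappa B)\le n$; this gives the first clause.

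For the dispersive bound the plan is to use the explicit spectral representation of $H_\star$ on $P_cL^2(\Gamma_\star)$ through its generalized eigenfunctions. For $k>0$ these are the scattering states determined by the vertex scattering matrix $S(k)=-(A+\mathrm{i}kB)^{-1}(A-\mathrm{i}kB)$, which is unitary for real $k$; the state incoming through channel $m$ equals $\delta_{jm}\mathrm{e}^{-\mathrm{i}kx}+S_{jm}(k)\,\mathrm{e}^{\mathrm{i}kx}$ on edge $j$. Stone's formula then expresses the $(j,l)$-block of $\mathrm{e}^{\mathrm{i}t\Delta_{\Gamma_\star}}P_c$ as a kernel of the schematic form
\begin{equation*}
K_{jl}(t,x,y)=\frac{1}{2\pi}\int_{\mathbb{R}}\mathrm{e}^{-\mathrm{i}tk^2}\Big(\delta_{jl}\,\mathrm{e}^{\mathrm{i}k(x-y)}+S_{jl}(k)\,\mathrm{e}^{\mathrm{i}k(x+y)}\Big)\,dk .
\end{equation*}
The diagonal transport term integrates to the free Schr\"odinger kernel $(4\pi\mathrm{i}t)^{-1/2}\mathrm{e}^{\mathrm{i}(x-y)^2/4t}$, which already obeys the $|t|^{-1/2}$ bound, so everything reduces to controlling the reflection/transmission contribution $\int_{\mathbb{R}}\mathrm{e}^{\mathrm{i}(-tk^2+k(x+y))}S_{jl}(k)\,dk$ uniformly in $x+y\ge 0$.

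The hard part is precisely the energy dependence of $S(k)$ in this last integral. The phase $\phi(k)=-tk^2+k(x+y)$ is quadratic with a single non-degenerate stationary point ($\phi''\equiv-2t$), so I would invoke van der Corput's lemma: since $S(k)$ is unitary (hence $|S_{jl}(k)|\le 1$) and, being a rational matrix function with no poles on the real axis and a finite limit $S_\infty=\lim_{k\to\infty}S(k)$, has entries of finite total variation with $S'_{jl}\in L^1(\mathbb{R})$, one gets $\big|\int \mathrm{e}^{\mathrm{i}\phi}S_{jl}\,dk\big|\le C|t|^{-1/2}$ uniformly in $x+y$. Equivalently one may split $S=S_\infty+R$, the constant part contributing a constant-coefficient free kernel and $R(k)=O(1/k)$ being the oscillatory remainder whose poles, lying off the real axis, encode exactly the bound states found above. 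The two points I expect to absorb most of the work are: first, when $B$ is not invertible one must pass to the Kostrykin--Schrader (Dirichlet/Neumann/Robin) normal form, so that the Dirichlet and Neumann projectors produce constant-coefficient odd/even reflection kernels while only the invertible Robin block carries the genuinely $k$-dependent amplitude treated above; and second, the low-energy ($k\to 0$) regime, where one must rule out or absorb a possible zero-energy threshold resonance to keep the $k$-integral under control. Summing the diagonal, constant-reflection and Robin contributions over the finitely many blocks then yields $\|K_{jl}(t,\cdot,\cdot)\|_{L^\infty}\le C|t|^{-1/2}$ and hence \eqref{H1eq}.
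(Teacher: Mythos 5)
Your route is genuinely different from the paper's: the paper disposes of Proposition \ref{prop 2.13} in two lines by citation, taking the dispersive bound \eqref{H1eq} from \cite{grecuignatjphysa} (Theorem A) and the finiteness of the point spectrum from \cite{laplacians} (Identity (3.1) and Theorem 3.7, which moreover identify the number of negative eigenvalues as $n_+(AB^{\dag})$), whereas you attempt a self-contained argument. Your treatment of the spectral part is complete and correct: on a star graph every edge is a half-line, so non-negative energies admit no $L^2$-solutions, and the reduction of a negative eigenvalue $-\kappa^2$ to $\det(A-\kappa B)=0$, a polynomial of degree at most $n$ that cannot vanish identically because $H_\star$ is semi-bounded (Proposition \ref{equivnorms}), is a clean elementary substitute for the citation. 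Your treatment of the dispersive part correctly reconstructs the strategy actually used in \cite{grecuignatjphysa} (extending \cite{adami}): Stone's formula, separation of the free transport kernel from the reflection term $\int \mathrm{e}^{\mathrm{i}(-tk^2+k(x+y))}S_{jl}(k)\,dk$, and van der Corput with an amplitude of bounded variation. However, as written this half is an outline rather than a proof: you explicitly defer the two points where the work lives (non-invertible $B$ and the $k\to 0$ regime). It is worth noting that both are resolved simultaneously by the observation you already make in passing: in the Kostrykin--Schrader normal form the only $k$-dependent piece of the scattering amplitude is $-(\Lambda+\mathrm{i}k)^{-1}(\Lambda-\mathrm{i}k)P_R$ with $\Lambda$ invertible and self-adjoint, so $S(k)$ is smooth on all of $\mathbb{R}$ (no real poles, no zero-energy resonance) with $S'(k)=O(k^{-2})\in L^1(\mathbb{R})$, which is exactly the hypothesis van der Corput needs; making this explicit, together with a justification that the generalized-eigenfunction expansion represents $\mathrm{e}^{\mathrm{i}t\Delta_{\Gamma_\star}}P_c$ (i.e.\ that the bound-state poles in the upper half-plane are correctly subtracted), would close the argument. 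In short: what the paper buys by citing is a complete proof at zero cost; what your approach buys is transparency about the mechanism, but at the price of details you have flagged without supplying.
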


\begin{proof}
The estimate \eqref{H1eq} is one of the main results in \cite{grecuignatjphysa}, more precisely, Theorem A. The second part of \ref{H1} follows by \cite[Identity (3.1) and Theorem 3.7]{laplacians}, according to which there are no positive eigenvalues of $H_{\star}$ and the number of negative ones is finite counting their multiplicities and equals precisely the number of positive eigenvalues of $AB^{\dag}$, denoted further by $n_+(AB^{\dag})$.
\end{proof}

In fact, the work from \cite{grecuignatjphysa} extends to general coupling conditions the dispersive properties obtained in \cite{adami} for the following three particular couplings (see \cite{berkolaiko} for details and physical meaning):

\noindent 1. The {\it Kirchhoff Hamiltonian} $\Delta^K_{\Gamma}$ with domain
\begin{equation*}\label{kirchhoff}
\mathcal{D}(\Delta_{\Gamma}) = \big\{ \psi \in H^2(\Gamma): \ \psi_i(0)=\psi_j(0), \ 1 \leq i,j \leq n, \ \sum_{j=1}^{n} \psi'_j(0_+)=0  \big\};
\end{equation*}
\noindent 2. The {\it $\delta$ (Delta) Hamiltonian} $\Delta^{\delta}_{\Gamma}$ with domain
\begin{equation*}\label{delta}
\mathcal{D}(\Delta^{\delta}_{\Gamma}) = \big\{ \psi \in H^2(\Gamma): \ \psi_i(0)=\psi_j(0), \ 1 \leq i,j \leq n, \ \sum_{j=1}^{n} \psi'_j(0_+)= \alpha \psi_1 (0) \big\}, \alpha \in \mathbb{R};
\end{equation*}
\noindent 3. The {\it $\delta'$ (Delta-prime) Hamiltonian} $\Delta^{\delta'}_{\Gamma}$ with domain
\begin{equation*}\label{delta-prime}
\mathcal{D}(\Delta^{\delta'}_{\Gamma}) = \big\{ \psi \in H^2(\Gamma): \ \psi'_i(0_+)=\psi'_j(0_+), \ 1 \leq i,j \leq n, \ \sum_{j=1}^{n} \psi_j(0)= \beta \psi_1 (0) \big\}, \beta \in \mathbb{R}.
\end{equation*} 

\noindent To be more precise, in \cite{adami} $\alpha$ and $\beta$ are assumed to be strictly positive, and in these cases, \ref{H1} is satisfied for $ \mathrm{e}^{\mathrm{i} t \Delta} $ instead of $\mathrm{e}^{\mathrm{i} t \Delta} P_c$.

\begin{prop}
For Kirchhoff and $\delta-$couplings, condition \ref{H2} is also satisfied. In particular, Theorems \ref{well-posedness} and \ref{convergence} also apply.
\end{prop}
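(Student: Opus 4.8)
The plan is to verify that condition \ref{H2} holds for the nonlinearity $F$ when $\Delta_{\Gamma_{\star}}$ is the Kirchhoff or the $\delta$-Hamiltonian, and then to invoke Theorems \ref{well-posedness} and \ref{convergence} directly (hypothesis \ref{H1} being already granted by Proposition \ref{prop 2.13}). First I would make the form domain $D(\mathcal{E}_{\star})$ explicit. Writing $\mathbf{1}:=\tfrac{1}{\sqrt{n}}(1,\dots,1)\in\mathbb{C}^n$ and $P_{\mathbf{1}}$ for the orthogonal projection onto $\mathrm{span}\,\mathbf{1}$, a direct computation of the projectors from Theorem \ref{self-adjointconditions} (in the Berkolaiko--Kuchment form) shows that for both couplings $P_{D,v}=I-P_{\mathbf{1}}$, while the remaining projector is $P_{\mathbf{1}}$, appearing as the Neumann part $P_{N,v}$ for Kirchhoff (so $P_{R,v}=0$) and as the Robin part $P_{R,v}$, with $\Lambda_v=\alpha$, for the $\delta$-coupling (so $P_{N,v}=0$). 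In particular $P_{D,v}\overline{f_v}=0$ is equivalent to $\overline{f_v}\in\mathrm{span}\,\mathbf{1}$, i.e. to continuity of $f$ at the central vertex. Hence in both cases
$$
D(\mathcal{E}_{\star})=\{\, f\in W^{1,2}(\Gamma_{\star}):\ f_1(0)=\cdots=f_n(0)\,\},
$$
the only constraint being continuity of the vertex values (the Robin part affects $\mathcal{E}_{\star}(f,f)$ but not the form domain).

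Next I would check the two requirements ensuring $F(u)\in D(\mathcal{E}_{\star})$ whenever $u\in D(\mathcal{E}_{\star})$. For the regularity, recall that $F$ is $C^1_c$ with Lipschitz derivative and $F(0)=0$; thus $DF$ is bounded and $F$ is globally Lipschitz with $|F(z)|\le L|z|$. Viewing $\mathbb{C}\cong\mathbb{R}^2$ and using the chain rule for compositions of $W^{1,2}$ functions with $C^1$ maps having bounded derivative, on each edge $F(u_e)\in W^{1,2}(I_e)$ with $(F(u_e))'=DF(u_e)\,u_e'$, whence $\|F(u_e)\|_{W^{1,2}(I_e)}\le C\,\|u_e\|_{W^{1,2}(I_e)}$; since $\Gamma_{\star}$ has finitely many edges, summation gives $F(u)\in W^{1,2}(\Gamma_{\star})$. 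Here $F(0)=0$ is exactly what yields both the pointwise bound $|F(u_e)|\le L|u_e|$ and the correct decay on the semi-infinite edges. For the boundary condition, the nonlinearity acts pointwise on values, so $F(u)_j(0)=F(u_j(0))$; since $u\in D(\mathcal{E}_{\star})$ forces $u_1(0)=\cdots=u_n(0)$, we obtain $F(u)_1(0)=\cdots=F(u)_n(0)$, that is $P_{D,v}\,\overline{F(u)_v}=0$. Therefore $F(u)\in D(\mathcal{E}_{\star})$ and \ref{H2} is verified.

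Having established \ref{H2}, and with \ref{H1} from Proposition \ref{prop 2.13}, Theorems \ref{well-posedness} and \ref{convergence} apply verbatim. I do not expect a genuine obstacle here; the one conceptual point is the identification of the form domain in the first step. Condition \ref{H2} holds precisely because the Dirichlet part of the Kirchhoff and $\delta$ conditions is a continuity constraint on the scalar vertex value, which the pointwise nonlinearity $F$ automatically preserves: there is no true ``Dirichlet-zero'' component forcing $f(v)=0$, so no further compatibility is required. The only mildly technical ingredient is the Sobolev chain rule, which is standard for $C^1$ maps with bounded derivative, the hypothesis $F\in C^1_c$ with $F(0)=0$ being exactly what makes it and the $L^2(\Gamma_{\star})$-bound go through.
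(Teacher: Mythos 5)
Your proposal is correct and follows the same route as the paper, which simply observes that in both the Kirchhoff and $\delta$ cases the form domain is $D(\mathcal{E}_{\star})=\{\psi\in W^{1,2}(\Gamma_\star):\psi_1(0)=\cdots=\psi_n(0)\}$ and that the pointwise nonlinearity $F$ preserves this continuity condition. You merely supply the details (identification of the projectors and the Sobolev chain rule) that the paper leaves implicit in its one-line argument.
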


\begin{proof}
It is clear since $D(\mathcal{E}_{\star}):= \{ \psi \in W^{1,2}(\Gamma) \, : \, \psi_i(0)=\psi_j(0), \ 1 \leq i,j \leq n  \} $ in both cases.
\end{proof}

\noindent {\bf Coupling conditions with no Robin part.}

\noindent The following commuting formula will be crucially used and it could be itself of general interest.

\begin{prop} \label{derivativesemigroup} If $P_R=0$, i.e. there is no Robin coupling, and $v \in D(\mathcal{E}_{\star})$, then
\begin{equation} \label{formuladerivativesemigroup}
\partial_x (\mathrm{e}^{- \mathrm{i} t H_{\star}} v(x))= \mathrm{e}^{-\mathrm{i} t H_{\star}} P_c v' (x) + 2 \mathrm{e}^{ \mathrm{i} t \Delta_{\mathbb{R}}} \widetilde{v'}(x) + \sum_{l=1}^{n_+(AB^{\dag})} \langle v,\varphi_l \rangle \mathrm{e}^{-\mathrm{i}t \lambda_l} \varphi_l'(x),
\end{equation}

\noindent where $\mathrm{e}^{\mathrm{i} t \Delta_{\mathbb{R}}}$ is the unitary group generated by the free Schr\"{o}dinger operator on $\mathbb{R}$, $\{ (\lambda_l,\varphi_l) \}_l$ are the eigencouples of $H_{\star}$ and $\widetilde{v'}=(v'_1,v'_2,\dots,v'_n)^T$, with
\begin{equation}\label{extended}
\widetilde{v'_j}(x)=
\begin{cases}
v'_j(x), & x \in [0, \infty)\\
0, & x \in (-\infty,0)
\end{cases}.
\end{equation}

\end{prop}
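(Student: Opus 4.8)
The plan is to split $v$ according to the spectral decomposition $v = P_c v + P_p v$ and treat the two parts separately. The point-spectrum part is immediate: since $\mathrm{e}^{-\mathrm{i}tH_\star}\varphi_l = \mathrm{e}^{-\mathrm{i}t\lambda_l}\varphi_l$, one has $\mathrm{e}^{-\mathrm{i}tH_\star}P_p v = \sum_l \langle v,\varphi_l\rangle \mathrm{e}^{-\mathrm{i}t\lambda_l}\varphi_l$, and differentiating term by term (the sum is finite by Proposition \ref{prop 2.13}) produces exactly the last sum in \eqref{formuladerivativesemigroup}. It therefore remains to identify $\partial_x(\mathrm{e}^{-\mathrm{i}tH_\star}P_c v)$ with the first two terms, and this is where the explicit kernel enters.

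First I would record the kernel of the continuous part. Because $P_R=0$, the coupling at the vertex is governed by two complementary orthogonal projectors $P_D,P_N$ on $\mathbb{C}^n$ with $P_D+P_N=I$, and $v\in D(\mathcal{E}_\star)$ means precisely $P_D\overline{v_v}=0$, where $\overline{v_v}=(v_j(0))_j$ (see \eqref{formdomain}). For such scale-invariant conditions the scattering matrix is energy independent and equals $S=P_N-P_D$; consequently, from the generalized eigenfunction expansion (as in \cite{adami, grecuignatjphysa}), the propagator $\mathrm{e}^{-\mathrm{i}tH_\star}P_c$ acts by the kernel $K^c_{jk}(t,x,y)=\delta_{jk}\,p_t(x-y)+S_{jk}\,p_t(x+y)$, where $p_t(z)=(4\pi\mathrm{i}t)^{-1/2}\mathrm{e}^{\mathrm{i}z^2/(4t)}$ is the free kernel on $\mathbb{R}$. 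For a single Neumann, resp. Dirichlet, edge this reduces to the familiar even, resp. odd, reflection kernel, which is a useful consistency check.

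The computation is then a differentiation under the integral sign followed by an integration by parts. Writing $(\mathrm{e}^{-\mathrm{i}tH_\star}P_c v)_j(x)=\int_0^\infty p_t(x-y)v_j(y)\,dy+\sum_k S_{jk}\int_0^\infty p_t(x+y)v_k(y)\,dy$ and using $\partial_x p_t(x\mp y)=\mp\,\partial_y p_t(x\mp y)$, I would integrate by parts in $y$ on each edge. The contributions at $y=+\infty$ vanish by the decay of $v\in H^1$ together with that of $p_t$, while the contributions at $y=0$ assemble into $\big(v_j(0)-\sum_k S_{jk}v_k(0)\big)p_t(x)=2\,(P_D\overline{v_v})_j\,p_t(x)$, which is zero \emph{precisely} because $v\in D(\mathcal{E}_\star)$. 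What survives are the two families $\int_0^\infty p_t(x-y)v_j'(y)\,dy=\mathrm{e}^{\mathrm{i}t\Delta_\mathbb{R}}\widetilde{v_j'}(x)$ and $\sum_k S_{jk}\int_0^\infty p_t(x+y)v_k'(y)\,dy$; regrouping them via the algebraic identity that reconstructs both $\mathrm{e}^{-\mathrm{i}tH_\star}P_c v'$ and the free evolution $\mathrm{e}^{\mathrm{i}t\Delta_\mathbb{R}}\widetilde{v'}$ yields the claimed identity \eqref{formuladerivativesemigroup} (with the scattering-matrix signs tracked carefully through $S=P_N-P_D$). Adding back the point-spectrum derivative closes the argument.

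The main obstacle is analytic rather than algebraic: the kernel $p_t$ is an oscillatory, not absolutely, integrable object, so neither the differentiation under the integral nor the integration by parts is a priori licit. I would handle this by first establishing the identity on a dense convenient class — e.g. $v\in D(H_\star)$, or $v$ smooth on each edge, vanishing near the vertex and decaying rapidly — after regularizing the propagator (replacing $\mathrm{i}t$ by $\mathrm{i}t+\epsilon$ and letting $\epsilon\downarrow 0$), and then extend to all $v\in D(\mathcal{E}_\star)$ by density, using that every term in \eqref{formuladerivativesemigroup} is continuous in $v\in D(\mathcal{E}_\star)$ — a consequence of the $L^2$-boundedness (indeed unitarity) of $\mathrm{e}^{-\mathrm{i}tH_\star}$ and $\mathrm{e}^{\mathrm{i}t\Delta_\mathbb{R}}$ together with the boundedness of $v\mapsto v'$ from $D(\mathcal{E}_\star)$ into $L^2$. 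The other delicate point is the rigorous justification of the generalized eigenfunction completeness, and hence of the kernel $K^c$, on a graph that does carry bound states.
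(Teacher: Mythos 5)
Your route is essentially the paper's own. The paper likewise (i) regularizes the propagator to $\mathrm{e}^{-(\mathrm{i}t+\varepsilon)H_{\star}}$ and passes to the limit in $H^1$ by spectral calculus (Claims \ref{claim1} and \ref{claim2} play the role of your ``dense class plus continuity in $D(\mathcal{E}_{\star})$'' step), (ii) represents the continuous part by an explicit kernel, (iii) trades $\partial_x$ for $\partial_y$ and integrates by parts on each edge, and (iv) kills the boundary contribution because it is proportional to $[\mathbb{I}_n-G]\overline{v}(0)=2P_D\overline{v}(0)=0$ by \eqref{otherGexpression} and $v\in D(\mathcal{E}_{\star})$ --- exactly your $2(P_D\overline{v_v})_j\,p_t(x)=0$ observation. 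The one substantive difference is how the kernel is obtained: you posit the generalized-eigenfunction form $\delta_{jk}p_t(x-y)+S_{jk}p_t(x+y)$ and correctly flag its justification as the delicate point, whereas the paper derives everything from Stone's formula applied to the explicit Kostrykin--Schrader resolvent kernel $r(z;x,y)$, so the completeness of generalized eigenfunctions never has to be invoked; replacing your kernel ansatz by that resolvent representation is precisely how to discharge the gap you yourself identify. A side remark: when $P_R=0$ one may take $A=P_D$, $B=P_N$, so $AB^{\dag}=0$ and $H_{\star}$ has no eigenvalues; the point-spectrum sum you (and the paper) carry along is actually vacuous in this proposition.

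The step you leave as ``the algebraic identity that reconstructs both terms'' is the only place where something can go wrong, and it does deserve scrutiny: carrying out your own integration by parts, the surviving volume terms are $\int_0^{\infty}p_t(x-y)v_j'(y)\,dy-\sum_k S_{jk}\int_0^{\infty}p_t(x+y)v_k'(y)\,dy$, which regroup as $-\big[\mathrm{e}^{-\mathrm{i}tH_{\star}}P_cv'\big]_j+2\big[\mathrm{e}^{\mathrm{i}t\Delta_{\mathbb{R}}}\widetilde{v'}\big]_j$, i.e.\ the first term of \eqref{formuladerivativesemigroup} should carry a minus sign. A sanity check on a single Neumann edge ($S=1$): the Neumann evolution is the free evolution of the even extension of $v$, whose $x$-derivative is the free evolution of the \emph{odd} extension of $v'$, namely $-\mathrm{e}^{\mathrm{i}t\Delta_N}v'+2\mathrm{e}^{\mathrm{i}t\Delta_{\mathbb{R}}}\widetilde{v'}$; the version with a plus sign already fails at $t=0$, where it returns $3v'$ instead of $v'$. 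The same sign is visible in the paper's display \eqref{threelimits}, where $J_2$ is defined with a minus in front of the Stone integral of $r_+-r_-$ applied to $v'$ and hence equals $-\mathrm{e}^{-(\mathrm{i}t+\varepsilon)H_{\star}}P_cv'$. So do not assert that the regrouping ``yields the claimed identity'' without tracking this; the discrepancy is immaterial for the subsequent Strichartz-type estimates, which only use the modulus of each summand, but it is a genuine sign to fix in the statement.
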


We have the following well-posedness result.

\begin{thm}\label{thm 2.16}
Let $\sigma<2$, $p=2\sigma + 2$ and $(r,p)$ an admissible exponent pair. If \ref{H2} holds and $P_R =0$, then, for any $\mathcal{F}_0$-measurable $X_0\in L^r_\omega D(\mathcal{E}_{\star})$ there exists a unique solution $X$ to \eqref{mild} with $a.s$ paths in $C(\mathbb{R_+},D(\mathcal{E_{\star}}))$.
\end{thm}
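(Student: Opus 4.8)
The plan is to show that the $L^2$-solution $X$ furnished by Theorem \ref{wellposednessL2} in fact carries the additional $D(\mathcal{E}_\star)$-regularity; since $D(\mathcal{E}_\star)\hookrightarrow L^2(\Gamma_\star)$ and the $L^2$-solution is unique, both existence and uniqueness in $C(\mathbb{R}_+,D(\mathcal{E}_\star))$ follow once $\partial_x X$ is controlled in a Strichartz norm. As in Subsection \ref{nonlinear equation}, I would first treat the truncated equation \eqref{mild truncated} by a Banach fixed point argument, now in the space
$$\mathcal{Y}_T:=\Big\{X\in L^r_{\mathcal P}(\Omega\times[0,T];D(\mathcal{E}_\star)):\ \|X\|_{L^r_\omega L^r_{[0,T]}L^p_x}+\|\partial_x X\|_{L^r_\omega L^r_{[0,T]}L^p_x}<\infty\Big\},\qquad p=2\sigma+2.$$
The $L^p_x$-component is handled exactly as in Theorem \ref{solution truncated}, so the whole novelty lies in estimating the derivative, where the commuting formula \eqref{formuladerivativesemigroup} of Proposition \ref{derivativesemigroup} must replace the identity $\partial_x\mathrm{e}^{\mathrm{i}t\Delta}=\mathrm{e}^{\mathrm{i}t\Delta}\partial_x$ available only on $\mathbb{R}$.

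Writing $g(s):=\theta_R(X)(s)\,|X(s)|^{2\sigma}X(s)$ and observing that $\theta_R(X)(s)$ is a scalar, hence $\partial_x\theta_R(X)(s)=0$, I would differentiate \eqref{mild truncated} in $x$ and apply \eqref{formuladerivativesemigroup} (with $t$ replaced by $\beta(t)-\beta(s)$, since $S_\beta(t,s)=\mathrm{e}^{-\mathrm{i}[\beta(t)-\beta(s)]H_\star}$) to both $S_\beta(t,0)X_0$ and $S_\beta(t,s)g(s)$. This splits $\partial_x X$ into three families: the ``good'' part $S_\beta(t,s)P_c(\cdot)'$, the free-line part $2\,\mathrm{e}^{\mathrm{i}[\beta(t)-\beta(s)]\Delta_{\mathbb{R}}}\widetilde{g'(s)}$, and the finite eigenfunction sum $\sum_l\langle g(s),\varphi_l\rangle\,\mathrm{e}^{-\mathrm{i}[\beta(t)-\beta(s)]\lambda_l}\varphi_l'$. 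Here \ref{H2} is essential: it guarantees $g(s)\in D(\mathcal{E}_\star)$, so that \eqref{formuladerivativesemigroup} may legitimately be applied to $v=g(s)$.

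Each family is then estimated in $L^r_\omega L^r_{[0,T]}L^p_x$. The good part has the exact structure of the inhomogeneous estimate in Theorem \ref{stochastic strichartz}, hence is bounded by $T^{\overline\beta}\|g'\|_{L^r_\omega L^{r'}_{[0,T]}L^{p'}_x}$ with $\overline\beta>0$; since $\Gamma=\mathbb{R}$ trivially satisfies \ref{H1} with $P_c=I$, the free-line part obeys the same bound by the Strichartz theory on $\mathbb{R}$ (the setting of \cite{debouarddebussche}), and extension by zero preserves the $L^{p'}_x$-norm, so no boundary term arises. The eigenfunction part is a finite sum with $|\mathrm{e}^{-\mathrm{i}[\beta(t)-\beta(s)]\lambda_l}|=1$ and $\|\varphi_l'\|_{L^p_x}<\infty$ (the bound states being smooth and exponentially decaying), so it is dominated by $\int_0^t\|g(s)\|_{L^{p'}_x}\,ds$ and needs no control of $\partial_x X$. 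Using $|\partial_x(|X|^{2\sigma}X)|\lesssim|X|^{2\sigma}|\partial_x X|$ together with $\tfrac{1}{p'}=\tfrac{2\sigma}{p}+\tfrac{1}{p}$ (valid precisely because $p=2\sigma+2$) gives $\|g'(s)\|_{L^{p'}_x}\lesssim\|X(s)\|_{L^p_x}^{2\sigma}\|\partial_x X(s)\|_{L^p_x}$ on $\{\theta_R(X)\neq0\}$, and collecting the three bounds yields
$$\|\partial_x X\|_{L^r_\omega L^r_{[0,T]}L^p_x}\lesssim\|X_0\|_{D(\mathcal{E}_\star)}+T^{\overline\beta}R^{2\sigma}\,\|\partial_x X\|_{L^r_\omega L^r_{[0,T]}L^p_x}+\big(\text{terms controlled by the $L^2$-theory}\big).$$
Choosing $T$ small relative to $R$ turns the map into a contraction on a ball of $\mathcal{Y}_T$ (the estimate on differences being identical), and by $L^2$-uniqueness the fixed point coincides with the solution of Theorem \ref{solution truncated}; conservation of the $L^2$-norm makes the local time step uniform, so iteration gives $X^R\in C(\mathbb{R}_+,D(\mathcal{E}_\star))$.

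The truncation is then removed exactly as in Subsection \ref{nonlinear equation}: one introduces the stopping times $\tau_R^n$ at which $\|\,\cdot\,\|_{L^r_{[0,\cdot]}L^p_x}$ reaches $R$, glues the truncated solutions by the strong Markov property, and uses conservation of the $L^2$-norm to prove $\tau=\infty$ a.s.; since the $L^2$-theory already furnishes the a priori finiteness of $\|X\|_{L^r_{[0,T]}L^p_x}$ on every bounded interval, the energy estimate above closes and propagates the $D(\mathcal{E}_\star)$-regularity to all of $\mathbb{R}_+$. I expect the main obstacle to be exactly the failure of the commutation $\partial_x\mathrm{e}^{\mathrm{i}tH_\star}=\mathrm{e}^{\mathrm{i}tH_\star}\partial_x$ on the graph: Proposition \ref{derivativesemigroup} resolves it, but at the price of the free-line remainder---estimated through the $\Gamma=\mathbb{R}$ Strichartz theory---and the finite eigenfunction remainder, which one must argue is genuinely lower order, all the while checking that \ref{H2} keeps the nonlinearity inside $D(\mathcal{E}_\star)$ so that \eqref{formuladerivativesemigroup} remains applicable.
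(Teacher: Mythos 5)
Your proposal follows essentially the same route as the paper's proof: a fixed point for the truncated equation in a space controlling both $\|X\|_{L^r_\omega L^r_{[0,T]}L^p_x}$ and the $W^{1,p}_x$/$D(\mathcal{E}_\star)$ norms, with the commuting formula \eqref{formuladerivativesemigroup} splitting $\partial_x$ of the Duhamel term into the $P_c$ part (graph Strichartz), the free-line remainder (Strichartz on $\mathbb{R}$ after extension by zero), and the finite eigenfunction sum (trivially lower order), the cutoff $\theta_R$ absorbing the power $|X|^{2\sigma}$ via H\"older with $p=2\sigma+2$, and globalization by localizing along the $L^2$-solution of Theorem \ref{wellposednessL2}. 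The only cosmetic differences are the exact power of $R$ in the contraction constant and that the paper additionally carries the $L^r_\omega(C([0,T];D(\mathcal{E}_\star)))$ component explicitly in the fixed-point norm; neither affects the argument.
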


Finally, let us consider the approximate equations \eqref{approximate}, for which we emphasize that merely local solutions can be obtained, due to Theorem \ref{well-posedness}.
Nevertheless, as in \cite{DeTs11}, the global well-posedness in the energy domain of the limiting equation \eqref{eq 2.6} allows us to prove that the local solutions of \eqref{approximate} converge to the desired limit.
More precisely, we have:

\begin{thm} \label{convergence3}
Let $\sigma \in [1/2, 2)$ and assume that \ref{H2} holds and $P_R=0$. If $X_0 \in D(\mathcal{E}_\star)$, then for every $\varepsilon >0$ there exists a unique (mild) solution $X_\varepsilon$ to \eqref{approximate} with continuous paths in $D(\mathcal{E}_\star)$, which is defined on a random time interval $[0, \tau_\varepsilon )$.
Moreover, for any $T>0$
\begin{equation*}
\begin{aligned}
&\lim\limits_{\varepsilon \to 0} \mathbb{P}([\tau_\varepsilon<T])=0\\
\end{aligned}
\end{equation*}
and the process $X_\varepsilon \mathbbm{1}_{[0, \tau_\varepsilon)}$ converges in law to the solution $X$ of \eqref{mild} when $\varepsilon \to 0$, on $C([0,T];D(\mathcal{E}_\star))$.
\end{thm}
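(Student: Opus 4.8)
The plan is to transfer the global information available for the limit equation \eqref{mild} (Theorem \ref{thm 2.16}) to the approximate equations \eqref{approximate} through the convergence result for the truncated nonlinearity (Theorem \ref{convergence}), by means of a coupling and stopping-time argument. I organise the argument in three steps.

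\emph{Step 1 (pathwise local well-posedness in the energy domain).} For fixed $\varepsilon>0$ the coefficient $n_\varepsilon(t):=\int_0^t \tfrac{1}{\varepsilon} m(s/\varepsilon^2)\,ds$ is, for a.e. $\omega$, a continuous real function, so \eqref{approximate} is pathwise a deterministic NLS with time-dependent dispersion and propagator $S_{n_\varepsilon}(t,s)=\mathrm{e}^{\mathrm{i}[n_\varepsilon(t)-n_\varepsilon(s)]\Delta_{\Gamma_\star}}$. Exactly as in the proof of Theorem \ref{thm 2.16}, I would differentiate the mild formulation and use the commuting formula \eqref{formuladerivativesemigroup} of Proposition \ref{derivativesemigroup} to express $\partial_x S_{n_\varepsilon}$ in terms of $S_{n_\varepsilon}P_c\partial_x$, a free half-line Schr\"odinger term and a finite eigenfunction sum, each of which obeys the deterministic Strichartz estimates granted by \ref{H1} (valid for $H_\star$ by Proposition \ref{prop 2.13}). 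A contraction on $C([0,T_\varepsilon];D(\mathcal{E}_\star))$ intersected with the relevant Strichartz space, followed by the usual continuation argument, yields a unique maximal solution $X_\varepsilon$ on $[0,\tau_\varepsilon)$ with the blow-up alternative $\tau_\varepsilon<\infty\Rightarrow\|X_\varepsilon(t)\|_{D(\mathcal{E}_\star)}\to\infty$; joint measurability in $\omega$ makes $\tau_\varepsilon$ a stopping time.

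\emph{Step 2 (no early blow-up).} Fix $R>0$, let $F_R(x)=|x|^{2\sigma}x\,\theta(|x|^2/R)$, and recall that the truncated equations \eqref{eq 2.13}, \eqref{eq 2.14} are globally well-posed in $D(\mathcal{E}_\star)$ (Theorem \ref{well-posedness}), with solutions $X^R_\varepsilon$ and $X^R$ converging in law on $C([0,T];D(\mathcal{E}_\star))$ (Theorem \ref{convergence}). Since the truncation is pointwise and in one dimension $D(\mathcal{E}_\star)\hookrightarrow L^\infty(\Gamma_\star)$ (Proposition \ref{equivnorms}), I introduce $\sigma^R_\varepsilon:=\inf\{t:\|X^R_\varepsilon(t)\|^2_{L^\infty}\ge R\}$, below which $F_R(X^R_\varepsilon)=|X^R_\varepsilon|^{2\sigma}X^R_\varepsilon$, so by uniqueness $X_\varepsilon=X^R_\varepsilon$ on $[0,\sigma^R_\varepsilon)$ and hence $\{\tau_\varepsilon<T\}\subseteq\{\sigma^R_\varepsilon<T\}$. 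The functional $u\mapsto\sup_{[0,T]}\|u(t)\|_{L^\infty}$ is continuous on $C([0,T];D(\mathcal{E}_\star))$, so Portmanteau applied to $X^R_\varepsilon\Rightarrow X^R$ gives
$$\limsup_{\varepsilon\to0}\mathbb{P}(\sigma^R_\varepsilon<T)\le\mathbb{P}\Big(\sup_{[0,T]}\|X^R(t)\|^2_{L^\infty}\ge R\Big).$$
The analogous first-passage identity $X^R=X$ on $[0,\sigma^R)$, where $X$ is the \emph{global} solution of \eqref{mild} from Theorem \ref{thm 2.16}, identifies the right-hand side with $\mathbb{P}(\sup_{[0,T]}\|X(t)\|^2_{L^\infty}\ge R)$, which tends to $0$ as $R\to\infty$ because $X\in C([0,T];D(\mathcal{E}_\star))\hookrightarrow C([0,T];L^\infty)$ a.s. Letting $\varepsilon\to0$ and then $R\to\infty$ yields $\lim_{\varepsilon\to0}\mathbb{P}(\tau_\varepsilon<T)=0$.

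\emph{Step 3 (convergence in law) and main obstacle.} Writing $Y_\varepsilon:=X_\varepsilon\mathbbm{1}_{[0,\tau_\varepsilon)}$, I would conclude by the standard ``convergence together'' principle: (a) $X^R_\varepsilon\Rightarrow X^R$ for each $R$ (Theorem \ref{convergence}); (b) $X^R\to X$ in law as $R\to\infty$, since $\mathbb{P}(X^R\neq X\text{ on }[0,T])=\mathbb{P}(\sigma^R<T)\to0$; and (c) $Y_\varepsilon=X^R_\varepsilon$ on $\{\sigma^R_\varepsilon\ge T\}$ together with the Step~2 bound gives $\lim_{R}\limsup_{\varepsilon}\mathbb{P}\big(\|Y_\varepsilon-X^R_\varepsilon\|_{C([0,T];D(\mathcal{E}_\star))}>\delta\big)=0$ for every $\delta>0$. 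By Billingsley's approximation theorem, (a)--(c) force $Y_\varepsilon\Rightarrow X$ on $C([0,T];D(\mathcal{E}_\star))$. The main obstacle I anticipate is Step~2: because the varying dispersion destroys energy conservation there is no pathwise a priori bound preventing $X_\varepsilon$ from blowing up, and the only source of global control is the limit equation; making the coupling rigorous — matching the first-passage times of the truncated and full solutions and passing the $L^\infty$ bounds through the weak convergence of Theorem \ref{convergence} via Portmanteau with the correct closed threshold sets — is where the delicacy lies, whereas Step~1 is a routine adaptation of Theorem \ref{thm 2.16} and Step~3 is a soft measure-theoretic wrap-up.
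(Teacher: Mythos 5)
Your Steps 2 and 3 reproduce the paper's argument in all essentials: the paper likewise works with the truncated solutions $X^R_\varepsilon$, $X^R$ of \eqref{eq 2.13}--\eqref{eq 2.14}, introduces the first-passage times $\tau^R_\varepsilon=\inf\{t:|X^R_\varepsilon|_{L^\infty(\Gamma_\star)}\geq\sqrt R\}$ and $\tau^R$, identifies $X_\varepsilon$ with $X^R_\varepsilon$ below the threshold and $X$ with $X^R$ below $\tau^R$ (using the global $D(\mathcal{E}_\star)$-solution from Theorem \ref{thm 2.16} to get $\lim_R\mathbb{P}(\tau^R<T)=0$), and concludes by the same diagonal-in-$(R,\varepsilon)$ estimate. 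The only methodological difference is that the paper upgrades the weak convergence of Theorem \ref{convergence} to a.s.\ convergence via Skorokhod's representation and then compares stopping times pathwise ($\tau_\varepsilon\geq\tau_\varepsilon^{R+1}\geq\tau^R$ for $\varepsilon$ small), whereas you stay at the level of laws and use Portmanteau on the closed set $\{\sup_{[0,T]}\|u\|_{L^\infty}^2\geq R\}$ together with Billingsley's convergence-together theorem. Both are correct and buy the same thing; yours avoids the change of probability space, the paper's makes the stopping-time comparison more transparent.

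The genuine problem is your Step 1. You propose to obtain the local solution $X_\varepsilon$ by a contraction ``exactly as in the proof of Theorem \ref{thm 2.16}'', i.e.\ using the commuting formula \eqref{formuladerivativesemigroup} together with Strichartz estimates for the propagator $S_{n_\varepsilon}(t,s)=\mathrm{e}^{\mathrm{i}[n_\varepsilon(t)-n_\varepsilon(s)]\Delta_{\Gamma_\star}}$. Such estimates do not exist: \ref{H1} gives the decay $\|S_{n_\varepsilon}(t,s)P_c\|_{L^1\to L^\infty}\lesssim|n_\varepsilon(t)-n_\varepsilon(s)|^{-1/2}$, and for a generic continuous path $n_\varepsilon$ (as opposed to a Brownian one, whose Gaussian law is what makes Theorem \ref{stochastic strichartz} work after integrating in $\omega$) the right-hand side need not be integrable in $t$, so no space-time smoothing is available. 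This is precisely the reason the paper states that \eqref{approximate} can only be solved via truncation and only locally. The fix is the paper's route, which your Step 2 already implicitly contains: take the globally defined truncated solutions $X^R_\varepsilon$ from Theorem \ref{well-posedness} (whose proof needs no Strichartz, only the Lipschitz property of the truncated nonlinearity on $D(\mathcal{E}_\star)\hookrightarrow L^\infty$), and \emph{define} $X_\varepsilon$ on $[0,\tau_\varepsilon)$, $\tau_\varepsilon=\lim_R\tau^R_\varepsilon$, by superposing them; uniqueness below each $L^\infty$ threshold then gives consistency. With Step 1 replaced in this way, your proof is the paper's proof.
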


\begin{rem}
In the cases of Kirchhoff and Dirichlet boundary conditions, \ref{H2} holds and $P_R=0$, hence Theorems \ref{thm 2.16} and \ref{convergence3} apply.
\end{rem}

\noindent {\bf A case of non-zero Robin part: $\delta-$type condition.} If $P_R \neq 0$, we do not know if $\mathrm{e}^{\mathrm{i} t \Delta_{\Gamma}}$ has smoothing effect on $L^r_t W^{1,p}_x$ spaces in general. Nevertheless, we have at least one example of interest where $P_R=0$, namely $\delta-$ coupling conditions, for which a formula similar to \eqref{formuladerivativesemigroup} holds, and it ensures the desired smoothing effect.

\begin{prop} \label{derivativesemigroupdelta}
For $\delta-$coupling conditions, the following holds for all $v \in D(\mathcal{E}_{\star})$:
\begin{equation}
\partial_x (\mathrm{e}^{- \mathrm{i} t H_{\star}} v(x))= \mathrm{e}^{-\mathrm{i} t H_{\star}} P_c v' (x) + 2 \mathrm{e}^{ \mathrm{i} t \Delta_{\mathbb{R}}} \widetilde{v'}(x) + \sum_{l=1}^{n_+(AB^{\dag})} \langle v,\varphi_l \rangle \mathrm{e}^{-\mathrm{i}t \lambda_l} \varphi_l'(x) + \dfrac{2 \alpha v_1(0)}{n} \mathrm{e}^{\mathrm{i}t \Delta_{\mathbb{R}}} \varphi (x),
\end{equation}

\noindent with $\{ (\lambda_l,\varphi_l) \}_l$ the eigencouples of $H_{\star}$, $\psi(y):= \mathrm{e}^{\frac{\alpha}{3} y} \mathbbm{1}_{(-\infty,0]}(y)$ and $\widetilde{v'}$ as in \eqref{extended}.
\end{prop}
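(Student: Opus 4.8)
The plan is to follow the proof of Proposition \ref{derivativesemigroup} and to isolate the single new effect produced by the $\delta$--coupling, namely that its scattering matrix depends on the energy. First I would represent the continuous part $\mathrm{e}^{-\mathrm{i}tH_{\star}}P_c$ through the generalized eigenfunctions of $H_{\star}$: at energy $k^2$ with an incoming wave on edge $m$ these are
$$\Psi^{(m)}_{k,j}(x)=\delta_{jm}\,\mathrm{e}^{-\mathrm{i}kx}+S_{jm}(k)\,\mathrm{e}^{\mathrm{i}kx},$$
and imposing the two $\delta$--conditions $\psi_i(0)=\psi_j(0)$ and $\sum_j\psi'_j(0)=\alpha\psi_1(0)$ determines the scattering matrix explicitly as $S(k)=\frac{2\mathrm{i}k}{\mathrm{i}kn-\alpha}\,J-I$, where $J=\mathbf{1}\mathbf{1}^{T}$ is the all--ones matrix. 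This is the Kirchhoff matrix $\frac{2}{n}J-I$ perturbed by an energy--dependent rank--one term, as the algebraic identity $\frac{2\mathrm{i}k}{\mathrm{i}kn-\alpha}=\frac{2}{n}+\frac{2\alpha}{n}\,\frac{1}{\mathrm{i}kn-\alpha}$ makes precise.

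Next I would differentiate the resulting kernel in $x$ and integrate by parts on each half--line to move the derivative onto the datum. The mechanism is the identity
$$\mathrm{i}k\int_0^{\infty}\mathrm{e}^{-\mathrm{i}ky}v_j(y)\,dy=v_j(0)+\int_0^{\infty}\mathrm{e}^{-\mathrm{i}ky}v'_j(y)\,dy,$$
valid because $v_j\to 0$ at infinity. The terms carrying $\int_0^{\infty}\mathrm{e}^{-\mathrm{i}ky}v'_j$ reassemble, after the $k$--integration and exactly as in the proof of Proposition \ref{derivativesemigroup}, into $\mathrm{e}^{-\mathrm{i}tH_{\star}}P_c v'$ together with the diagonal free evolution $2\,\mathrm{e}^{\mathrm{i}t\Delta_{\mathbb{R}}}\widetilde{v'}$ (integration of $\int_0^{\infty}$ against the free kernel being integration over $\mathbb{R}$ against the zero--extension $\widetilde{v'}$); differentiating the point part gives $\sum_l\langle v,\varphi_l\rangle\mathrm{e}^{-\mathrm{i}t\lambda_l}\varphi_l'$. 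The whole novelty of the $\delta$--coupling is therefore confined to the boundary terms $v_j(0)$.

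These boundary terms come multiplied by the coefficient $\frac{2\mathrm{i}k}{\mathrm{i}kn-\alpha}$ of the rank--one part $J$ of $S(k)$. Splitting it as above, the energy--independent part $\frac{2}{n}$ contributes precisely the boundary terms already present in the $P_R=0$ case, which cancel and leave no extra contribution, while the energy--dependent part $\frac{2\alpha}{n}\,\frac{1}{\mathrm{i}kn-\alpha}$ is the one that survives. Using that on $D(\mathcal{E}_{\star})$ all vertex values coincide, so that the rank--one structure $J$ collects $\sum_j v_j(0)=n\,v_1(0)$, this surviving piece equals $\frac{2\alpha v_1(0)}{n}$ times $\int_0^{\infty}\mathrm{e}^{-\mathrm{i}tk^2}\mathrm{e}^{\mathrm{i}kx}\frac{1}{\mathrm{i}kn-\alpha}\,dk$. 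The final step is to recognize, through the Fourier identity $\int_{-\infty}^{0}\mathrm{e}^{(\alpha/n)y}\mathrm{e}^{-\mathrm{i}ky}\,dy=\frac{1}{\alpha/n-\mathrm{i}k}$, that this multiplier is a constant multiple of the transform of the profile $\psi(y)=\mathrm{e}^{(\alpha/n)y}\mathbbm{1}_{(-\infty,0]}(y)$, so that the surviving term collapses into the free evolution $\frac{2\alpha v_1(0)}{n}\,\mathrm{e}^{\mathrm{i}t\Delta_{\mathbb{R}}}\psi$ claimed in the statement.

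The hard part will be the rigorous handling of this last, energy--dependent term. The multiplier $\frac{1}{\mathrm{i}kn-\alpha}$ has a pole at $k=-\mathrm{i}\alpha/n$, so extending the half--line $k$--integral to all of $\mathbb{R}$ and interchanging it with the $x$--differentiation and the integration by parts must be justified by a contour/limiting argument; in the repulsive regime $\alpha>0$ considered in \cite{adami} the pole lies off the real axis, $\psi\in L^2((-\infty,0])$, and the bound--state sum is void, which is the clean case. A secondary point is that $v\in D(\mathcal{E}_{\star})$ only carries $W^{1,2}$ regularity, so I would first establish the claimed identity for $v$ in a dense subclass (say $v\in D(H_{\star})$ with $v'$ vanishing near the vertex) and then pass to the limit, using the isometry of $\mathrm{e}^{\mathrm{i}t\Delta_{\mathbb{R}}}$ and the norm equivalence of Proposition \ref{equivnorms} to control both sides in $D(\mathcal{E}_{\star})$.
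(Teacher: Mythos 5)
Your proposal is correct and takes essentially the same route as the paper: the paper's proof simply reuses the general identity \eqref{derivativesemigroupwhole} (valid for any coupling), computes $\{[\mathbb{I}_n-G(k;A,B)]v(0)\}_i=\tfrac{2\alpha}{\alpha-\mathrm{i}nk}\,v_1(0)$ from \eqref{otherGexpression} with $P_R=\mathbb{I}_n-P_D$ and $\Lambda=\alpha/n$, and identifies the surviving oscillatory integral with $\tfrac{2\alpha}{n}v_1(0)\,\mathrm{e}^{\mathrm{i}t\Delta_{\mathbb{R}}}\psi$ via \cite[Identity (2.15)]{adami} --- precisely your splitting of the energy-dependent rank-one part of the scattering matrix combined with the Fourier identity for $\psi(y)=\mathrm{e}^{(\alpha/n)y}\mathbbm{1}_{(-\infty,0]}(y)$. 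Your caveat about the regime $\alpha>0$ is apt, since the paper's representation $\int_0^{\infty}\mathrm{e}^{-\frac{\alpha}{n}u}\cdots\,du$ likewise converges only then, and your profile $\mathrm{e}^{(\alpha/n)y}$ is the correct one (the exponent $\alpha/3$ and the symbol $\varphi$ in the statement are typos for $\alpha/n$ and $\psi$).
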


Since $D(\mathcal{E})$ is continuously embedded in $L^{\infty}(\Gamma)$, we can bound $|u_{0,1}(0)|$ with $ \| u_0 \|_{D(\mathcal{E})}$,and the proofs of Theorems \ref{thm 2.16} and \ref{convergence3} can be easily adjusted to get:

\begin{thm} \label{thm 2.19}
The conclusions of Theorems \ref{thm 2.16} and \ref{convergence3} are valid for $\delta-$coupling conditions.
\end{thm}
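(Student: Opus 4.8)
The plan is to reduce Theorem \ref{thm 2.19} to Theorems \ref{thm 2.16} and \ref{convergence3} by verifying that the only structural ingredient those proofs actually used---namely the commuting formula of Proposition \ref{derivativesemigroup} that produces a decomposition of $\partial_x \mathrm{e}^{-\mathrm{i}tH_\star}v$ into a bounded group term plus explicitly smoothing pieces---remains available for $\delta$-coupling conditions via Proposition \ref{derivativesemigroupdelta}. First I would observe that $\delta$-couplings indeed have $P_R=0$ (the boundary condition is of mixed Dirichlet/Neumann type in the projector decomposition, with the constraint $\sum_j \psi_j'(0_+)=\alpha\psi_1(0)$ entering through the Neumann part rather than a Robin part), so \ref{H2} holds by the computation $D(\mathcal{E}_\star)=\{\psi\in W^{1,2}(\Gamma):\psi_i(0)=\psi_j(0)\}$ already recorded above; consequently Proposition \ref{prop 2.13} gives \ref{H1}, and the $L^2$-theory of Theorem \ref{wellposednessL2} is in force.

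The heart of the matter is that Proposition \ref{derivativesemigroupdelta} differs from \eqref{formuladerivativesemigroup} only by the extra term $\frac{2\alpha v_1(0)}{n}\mathrm{e}^{\mathrm{i}t\Delta_{\mathbb{R}}}\varphi(x)$. I would therefore go through the proof of Theorem \ref{thm 2.16} and check term-by-term that this additional summand is controlled in the same $L^r_tW^{1,p}_x$ norms. The key point is that $\mathrm{e}^{\mathrm{i}t\Delta_{\mathbb{R}}}$ is the free Schr\"odinger group on $\mathbb{R}$, for which the classical dispersive estimate $\|\mathrm{e}^{\mathrm{i}t\Delta_{\mathbb{R}}}\|_{L^1\to L^\infty}\lesssim|t|^{-1/2}$ and hence the full (deterministic, and after randomization the stochastic) Strichartz machinery of Theorem \ref{stochastic strichartz} apply verbatim; the fixed exponentially decaying profile $\varphi$ is a fixed $W^{1,p}\cap L^1$ function, so the new term obeys exactly the same space-time-$\Omega$ estimates as the term $2\mathrm{e}^{\mathrm{i}t\Delta_{\mathbb{R}}}\widetilde{v'}$ already handled in Theorem \ref{thm 2.16}. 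The scalar coefficient $\frac{2\alpha v_1(0)}{n}$ is harmless because, as noted right before the statement, $D(\mathcal{E})\hookrightarrow L^\infty(\Gamma)$ continuously, so $|v_1(0)|\leq\|v\|_{L^\infty}\lesssim\|v\|_{D(\mathcal{E}_\star)}$; this keeps all constants uniform and linear in the $D(\mathcal{E}_\star)$-norm, which is precisely what the contraction-mapping argument requires.

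With the commuting formula thus controlled, I would then run the fixed-point argument of Theorem \ref{thm 2.16} unchanged: apply $\partial_x$ to the mild formulation \eqref{mild}, substitute Proposition \ref{derivativesemigroupdelta}, estimate each of the four resulting terms in $L^\rho_\omega L^r_{[0,T]}W^{1,p}_x$ using Theorem \ref{stochastic strichartz} together with \ref{H2} (which guarantees $F(u)\in D(\mathcal{E}_\star)$ so that $\partial_x F(u)$ makes sense and satisfies the relevant Leibniz/chain bounds), and close the contraction for small $T$, then iterate using conservation of the $L^2$-norm to reach arbitrary $T$. For the convergence statement I would likewise reproduce the proof of Theorem \ref{convergence3}: the tightness and identification-of-the-limit arguments there depend only on the $D(\mathcal{E}_\star)$-bounds coming from the commuting formula and on the invariance principle \ref{H0}, both of which now hold, so $X_\varepsilon\mathbbm{1}_{[0,\tau_\varepsilon)}$ converges in law to $X$ on $C([0,T];D(\mathcal{E}_\star))$ and $\mathbb{P}(\tau_\varepsilon<T)\to0$.

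I expect the main obstacle to be bookkeeping rather than any genuinely new idea: one must confirm that the extra fixed-profile term $\mathrm{e}^{\mathrm{i}t\Delta_{\mathbb{R}}}\varphi$ does not spoil the \emph{uniformity} of the Strichartz constants across the recursive time intervals $[\tau_R^n,\tau_R^{n+1}]$ used to globalize the solution, since its coefficient depends on the (time-varying) trace $v_1(0)$ of the current iterate rather than on a conserved quantity. The resolution is that the $L^\infty$-embedding bound turns this coefficient into a factor controlled by $\|v\|_{D(\mathcal{E}_\star)}$, which is exactly the norm being propagated, so the estimate remains self-consistent; verifying this carefully---and checking that the $\delta$-interaction does not introduce any boundary contribution when integrating by parts in the energy estimates---is where the real, if routine, work lies.
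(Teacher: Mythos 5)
Your strategy is the paper's own: the paper disposes of Theorem \ref{thm 2.19} in a single sentence, namely by substituting the modified commuting formula of Proposition \ref{derivativesemigroupdelta} into the proofs of Theorems \ref{thm 2.16} and \ref{convergence3}, treating the additional term $\frac{2\alpha v_1(0)}{n}\mathrm{e}^{\mathrm{i}t\Delta_{\mathbb{R}}}\psi$ by the Strichartz machinery for the free group on $\mathbb{R}$ applied to the fixed profile $\psi$, and absorbing the scalar trace via $|v_1(0)|\lesssim \|v\|_{L^{\infty}(\Gamma_\star)}\lesssim \|v\|_{D(\mathcal{E}_\star)}$. Your discussion of why the time-varying coefficient does not spoil the uniformity of the contraction constants is exactly the bookkeeping the paper leaves implicit, and it is sound (modulo the tacit restriction $\alpha>0$, without which the profile $\psi$ is not even integrable).

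One assertion is wrong, however, and it is inconsistent with the rest of your own argument: $\delta$-couplings do \emph{not} have $P_R=0$. For $\alpha\neq 0$ the decomposition at the central vertex is $P_D=$ the projection onto $\{(1,\dots,1)\}^{\perp}$, $P_N=0$, and $P_R=\mathbb{I}_n-P_D\neq 0$ with $\Lambda=\alpha/n$; the condition $\sum_j\psi_j'(0_+)=\alpha\psi_1(0)$ is a genuine Robin condition, not a Neumann one, since a Neumann part $P_N\overline{f'_v}=0$ cannot carry the coupling constant $\alpha$ or any boundary value of $f$ itself. This is the whole point of the subsection: were $P_R=0$, Theorem \ref{thm 2.16} would apply verbatim and Theorem \ref{thm 2.19} would be vacuous. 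Concretely, the extra term in Proposition \ref{derivativesemigroupdelta} is the contribution $[\mathbb{I}_n-G(k;A,B)]u_0(0)=[1+(\Lambda+\mathrm{i}k)^{-1}(\Lambda-\mathrm{i}k)]P_Ru_0(0)$, which vanishes precisely when $P_Ru_0(0)=0$ and survives here because $P_R\neq 0$. (You may have been misled by a typo in the paper, which writes ``$P_R=0$'' in the sentence introducing the $\delta$-case even though the subsection heading and the proof of Proposition \ref{derivativesemigroupdelta} both say the Robin part is non-zero.) The fact you wanted to extract from the false premise --- that \ref{H2} holds --- is still true, by direct inspection of the form domain $D(\mathcal{E}_\star)=\{\psi\in W^{1,2}(\Gamma_\star):\psi_i(0)=\psi_j(0)\}$, so nothing downstream of the misstatement actually fails; but that justification must be replaced.
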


\subsection{Further examples}
First of all, we would like to emphasize that the results from Subsection \ref{nonlinear equation} hold for self-adjoint operators which satisfy \ref{H1}, on any $L^2-$space, i.e. the metric graph structure was not crucial for those results. For example, in \cite[Theorem 1.2]{ignatzuazua} it was shown that \ref{H1} is satisfied with $P_{c}=Id$ for $H=\frac{\partial}{\partial x} (a(x) \frac{\partial}{\partial x})$ on $L^2(\mathbb{R})$, with $a \in BV(\mathbb{R})$, bounded, such that $Var(log(a))<2 \pi$. In particular, Theorem \ref{wellposednessL2} applies to prove well-posedness on $L^2(\mathbb{R})$ for equation \eqref{mild} with $\Delta_{\Gamma}$ replaced by $H$. 

In the sequel, we present several examples of coupling conditions which induce self-adjoint extensions of the Laplacian $\Delta_{\Gamma}$ on different types of metric graphs $\Gamma$, for which the $L^1-L^{\infty}$ dispersive estimate \ref{H1} and stability condition \ref{H2} hold, other than star-graphs, the latter being already extensively studied in Subsection \ref{well posedness star-graph}.

\subsubsection{Simple graphs with internal edges}

We include here the case of the Schr\"{o}dinger group on the real line with several point defects, which can be regarded as simple graphs with a finite number $n \in \mathbb{N}^*$ of edges, with particular self-adjoint coupling conditions at each vertex.
\begin{figure}[H]
\centering
\includegraphics[scale=0.17]{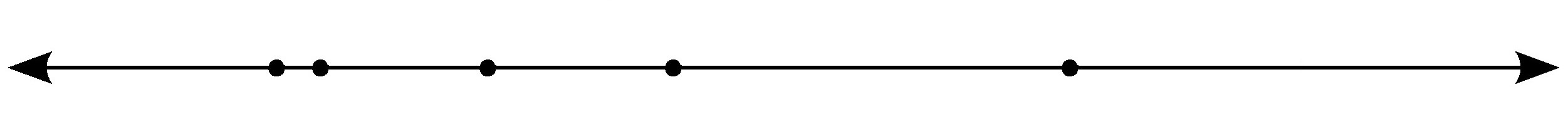}
\caption{\small{Simple graph with $n$ edges: $n-2$ internal ones and $2$ external ones.}}\label{stargraph} 
\end{figure}

In \cite{AdNo09}, the real line setting with a single point defect was considered. More precisely, dispersive estimates \ref{H1} are fulfilled in the case of all self-adjoint extensions, which can be described in one of the following ways:
\begin{equation*}
\mathcal{D}(H_U) = \big\{ \psi \in H^2(\mathbb{R} \setminus \{ 0 \}): \ \left[ \begin{array}{c} \psi(0_+) \\ \psi'(0_+) \end{array} \right] = U  \left[ \begin{array}{c} \psi(0_-) \\ \psi'(0_-) \end{array} \right] \big\},
\end{equation*}
\noindent where $U= \omega \begin{bmatrix} a & b \\ c & d \end{bmatrix}$ is given, with $|\omega|=1$ and $ad-bc=1$, or
\begin{equation*}
\mathcal{D}(H_{p,q}) = \big\{ \psi \in H^2(\mathbb{R} \setminus \{ 0 \}): \ \psi(0_+)=p \psi'(0_+), \psi(0_-)=q \psi'(0_-) \big\},
\end{equation*}
\noindent with given $p,q \in \mathbb{R} \cup \{ \infty \}$. Moreover, by \cite[Proposition 2.1]{AdNo09}, we can immediately deduce that the form domain stability condition \ref{H2} is satisfied for all $H_{p,q}$. In the case of $H_U$, if $b \neq 0$, than \ref{H2} is fulfilled. Otherwise, \ref{H2} is satisfied provided that $\omega \alpha = 1 $. Note that these can also be viewed in the framework of star-shaped graphs with two infinite length edges attached to a common vertex.

In \cite{kovariksacchetti2010}, the case of two symmetric Delta Dirac potentials placed at points $\pm a \in \mathbb{R}$ was considered. The Hamiltonian $\Delta_{\alpha}$ taken into consideration has domain:
\begin{equation*} \label{symmetricdelta}
\begin{aligned}
\mathcal{D}(\Delta_{\alpha}) = \big\{ \psi \in H^2(\mathbb{R} \setminus \{ \pm a \}): & \ \psi(\pm a +0) = \psi(\pm a-0), \\ 
& \psi'(\pm a +0) - \psi'(\pm a -0) = \alpha \psi(\pm a +0), \ \alpha \in \mathbb{R} \big\}.
\end{aligned}
\end{equation*}

\noindent They proved that the corresponding group $\mathrm{e}^{\mathrm{i} t \Delta_{\alpha}} P_c$ satisfies \ref{H1} provided that $a \alpha \neq -1$. We remark that for $\alpha \geq 0$ the discrete spectrum of $-\Delta_{\alpha}$ is empty and thus the dispersive estimate \ref{H1} holds true for $\mathrm{e}^{\mathrm{i} t \Delta_{\alpha}}$. Furthermore, \ref{H2} holds since the form domain is $H^1(\mathbb{R}_-) \oplus H^1(\mathbb{R}_+)$ with continuity at the points $\pm a$.

The dispersive results in \cite{kovariksacchetti2010} were extended in \cite{banicaignat2014} to several Dirac Deltas located at finitely many points $\{ x_j\}_{j=1}^{p}$, $p \in \mathbb{N}^*$, on the real line, with the associated strengths $\{\alpha_j \}_{j=1}^{p}$. More precisely, the domain of this Hamiltonian is 
\begin{equation*}
\begin{aligned}
\mathcal{D}(\Delta_{\alpha,p}) = \big\{ \psi \in H^2(\mathbb{R} \setminus & \{ \{x_j\}_{j=1}^{p} \}): \ \psi(x_j +0) = \psi(x_j-0), \\
& \psi'(x_j +0) - \psi'(x_j-0) = \alpha_j \psi(x_j +0), \ \alpha_j \in \mathbb{R}, \ j=1, \cdots, p \big\}.
\end{aligned}
\end{equation*}

\noindent The strengths $\{ \alpha_j \}_{j=1}^{p}$ and the points $\{x_j \}_{j=1}^{p}$ are assumed to satisfy some technical condition which excludes only a few explicit situations. We mention that in the case of positive strengths $\alpha_j > 0$, $j=1 \cdots p$, this condition is fulfilled and, moreover, \ref{H1} is satisfied by $\mathrm{e}^{\mathrm{i}t \Delta_{\alpha,p}}$. Clearly, \ref{H2} is again satisfied.

\subsubsection{Trees}

In this subsection we place ourselves in the framework of trees, more precisely, a particular case of regular trees (Figure \ref{ignattree}) and slightly more general ones (Figure \ref{banicaignattree}). A tree is a graph which has each two vertices connected by a single path of edges, and we say that the tree is {regular} if all the vertices of the same generation have equal number of descendants, and all edges from the same generation are of the same length (for more details, we refer to \cite{laplaceonregulartrees, mugnolo}).
\begin{figure}[H]
  \centering
  \begin{minipage}[b]{0.45\textwidth}
  \centering
    \includegraphics[width=4.5cm]{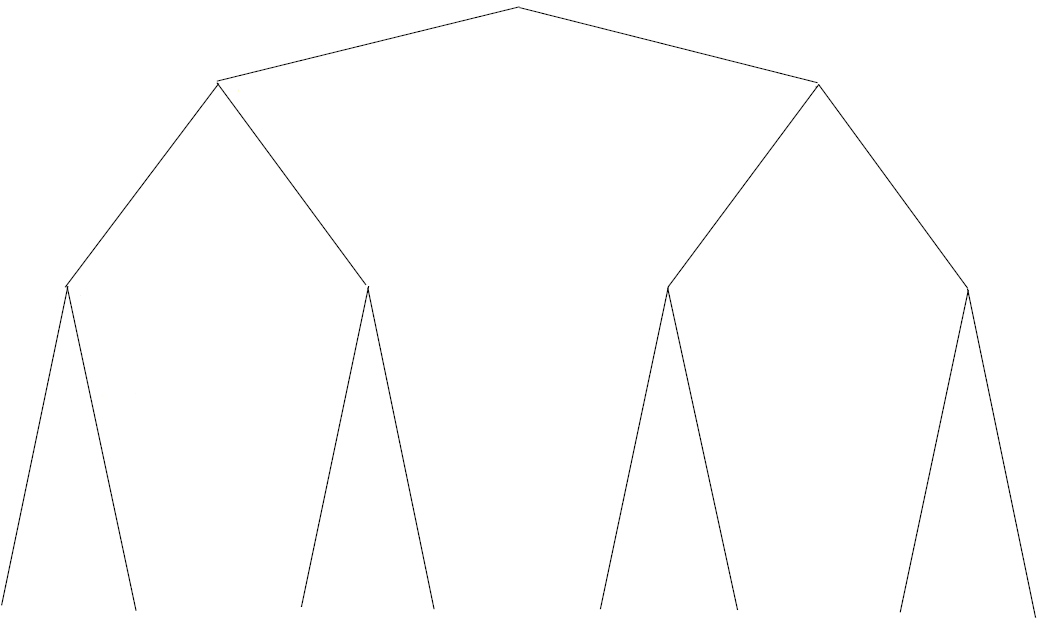}
    \caption{\small{Regular tree, $\Gamma_r$, with the last generation formed by infinite edges.}}\label{ignattree}
  \end{minipage}
  \hfill
  \begin{minipage}[b]{0.45\textwidth}
  \centering
    \includegraphics[width=4cm]{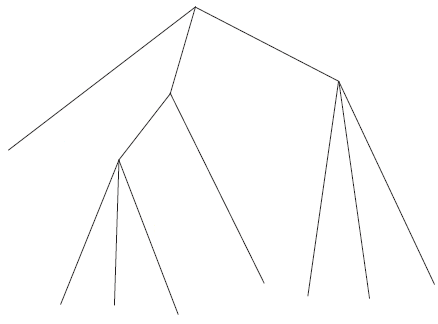}
    \caption{\small{A more general tree, $\Gamma_t$, with vertices of degree greater or equal three.}}\label{banicaignattree}
  \end{minipage}
\end{figure}

In both cases, $\Gamma_r$ and $\Gamma_t$, we denote by $E_v$ the set of edges adjacent to the vertex $v$. For each vertex $v$ and $E_v$, we set, if the edge $e$ is of finite length,
\begin{equation*}
i(v,e)=
\left\{
\begin{array}{lll}
0, & v=\partial^-(e)\\
l_e, & v=\partial^+(e)
\end{array}, 
\right.
\end{equation*}
otherwise $ i(v,e)=0.$
The normal derivative of the restriction of $\psi$ on the edge $e \in E_v$ evaluated at the endpoints is
\begin{equation*}
\dfrac{\partial \psi_e}{\partial n_e} (i(v,e))=
\left\{
\begin{array}{lll}
-\psi'_e(0_+), & i(v,e)=0\\
\psi'_e({l_e}_-), & i(v,e)=l_e\\
\end{array}.
\right.
\end{equation*}
Consider now the Laplacian with Kirchhoff coupling conditions at each vertex of the tree
\begin{equation*}
\mathcal{D}(\Delta_{\Gamma}) = \big\{ \psi \in H^2(\Gamma): \ \psi_e(i(v,e))=\psi_{e'}(i(v,e')), e,e' \in E_v,  \sum_{e \in E_v} \dfrac{\partial \psi_e}{\partial n_e} (i(v,e))=0, \ v \in V \big\},
\end{equation*}
where $\Gamma=\Gamma_r$ or $\Gamma=\Gamma_t$. In \cite{ignatsiam} (for $\Gamma_r$) and in \cite{banica2011} (for $\Gamma_t$), it was shown that the dispersive estimate \ref{H1} holds. Moreover, \ref{H2} is satisfied in both cases, since the form domain consists of $H^1(\Gamma)$ functions with continuity at the vertices.

\section{Proofs of the main results}

\subsection{Proofs of results from Subsection \ref{linear equation and strichartz}}

\begin{proof}[\bf Proof of Proposition \ref{linear solution}] 
The fact that a.s. $(S_\beta(t,s)X_s)_{t\geq s}$ has paths in $C([s,\infty),D(\Delta_\Gamma^2))$ and $|X(t)|_{L^2}=|X(s)|_{L^2}, t\geq s$ follows directly from \eqref{stochastic semigroup} and the properties of the deterministic Schr\"{o}dinger group $(e^{it\Delta_\Gamma})_{t\in \mathbb{R}}$.
To show that $S_\beta$ is the unique solution to \eqref{linear stochastic}, we rely on the following well known spectral representation of $f(-\Delta_\Gamma)$ which holds for any continuous function $f:\mathbb{R}\rightarrow \mathbb{C}$:
\begin{equation}\label{spectral}
f(-\Delta_\Gamma)u_0= \lim\limits_n\lim\limits_{\varepsilon \to 0}\int_{-n}^{n}f(\lambda)[R_{\lambda+\mathrm{i}\varepsilon}-R_{\lambda-\mathrm{i}\varepsilon}]u_0\; d \lambda, \quad \forall u_0 \in D(f(\Delta_\Gamma)),
\end{equation}
where $R(z):=(-\Delta_{\Gamma}- z )^{-1}$, for $z$ in the resolvent set of $-\Delta_{\Gamma}$.
The limits are in $L^2(\Gamma)$, and their order cannot be reversed.

Let $X_s\in D(\Delta_\Gamma^2) \ \mathbb{P}\mbox{-.a.s.}$ 
By \eqref{spectral}, we have 
\begin{equation}\label{spectral for X}
S_\beta(t,s)X_s= \lim\limits_n\lim\limits_{\varepsilon \to 0}\int_{-n}^{n} {\mathrm{e}^{\mathrm{i}[\beta(t)-\beta(s)]\lambda}}[R_{\lambda+\mathrm{i}\varepsilon}-R_{\lambda-\mathrm{i}\varepsilon}]X_s \; d\lambda,
\end{equation}
hence $(S_\beta(t,s)X_s)_{t\geq s}$ is $(\mathcal{F}_t)_{t \geq s}$-adapted. 
Using It\^o formula  for $\mathrm{e}^{\mathrm{i}[\beta(\cdot)-\beta(s)]\lambda}$ in \eqref{spectral for X}, we easily arrive at
\begin{align*}
S_\beta(t,s)X_s &=X_s +\mathrm{i} \lim\limits_n\lim\limits_{\varepsilon \to 0}\int_{-n}^{n} \int_s^t {\lambda} {\mathrm{e}^{\mathrm{i}[\beta(r)-\beta(s)]\lambda}} [R_{\lambda+\mathrm{i}\varepsilon}-R_{\lambda-\mathrm{i}\varepsilon}]X_s \; d\beta(r) \; d\lambda \qquad \qquad \qquad \\
& \quad -{\frac{1}{2}} \lim\limits_n\lim\limits_{\varepsilon \to 0}\int_{-n}^{n}  \int_s^t  \lambda^2 {\mathrm{e}^{\mathrm{i}[\beta(r)-\beta(s)]\lambda}} [R_{\lambda+\mathrm{i}\varepsilon}-R_{\lambda-\mathrm{i}\varepsilon}]X_s \; dr \; d\lambda.
\end{align*}
By Fubini and dominated convergence theorems (classical and stochastic versions, see e.g. \cite{daprato}, \cite{liurockner}), but also \eqref{spectral}, we get a.s.
\begin{align*}
S_\beta(t,s)X_s & =X_s+\mathrm{i} \int_{s}^{t} \lim\limits_n\lim\limits_{\varepsilon \to 0} \int_{-n}^{n}  {\lambda} {\mathrm{e}^{\mathrm{i}[\beta(r)-\beta(s)]\lambda}} [R_{\lambda+\mathrm{i}\varepsilon}-R_{\lambda-\mathrm{i}\varepsilon}]X_s \; d\lambda \; d\beta(r) \\
& \ \ \qquad -{\frac{1}{2}} \int_{s}^{t}  \lim\limits_n\lim\limits_{\varepsilon \to 0} \int_{-n}^{n}  \lambda^2 {\mathrm{e}^{\mathrm{i}[\beta(r)-\beta(s)]\lambda}} [R_{\lambda+\mathrm{i}\varepsilon}-R_{\lambda-\mathrm{i}\varepsilon}]X_s \; d\lambda \; dr\\
&=X_s-{\frac{1}{2}} \int_{s}^{t} \Delta_{\Gamma}^2 X(r) \; dr + \mathrm{i} \int_{s}^{t} \Delta_{\Gamma} X(r) \; d\beta(r) \; \quad \mbox{ for all } t \geq s.
\end{align*}

Finally, if $(Z(t))_{t\geq 0}$ is another solution to \eqref{linear stochastic}, by similar spectral arguments and stochastic calculus as above, one can easily check that ${\mathrm{e}^{-\mathrm{i} [\beta(t)-\beta(s)] \Delta_\Gamma}} Z(t)= X_s \ \mathbb{P}\mbox{-a.s.}$ for all $t\geq 0$.
But this clearly completes the proof since both $Y$ and $S_\beta(\cdot,s)$ have continuous paths, and by the group property of $({\mathrm{e}^{\mathrm{i} t \Delta_\Gamma}})_{t \in \mathbb{R}}$ we obtain 
\begin{equation*}
Z(t)={\mathrm{e}^{\mathrm{i}[\beta(t)-\beta(s)]\Delta_\Gamma}}X_s=S_\beta(t,s) \ \mathbb{P}\mbox{-.a.s.}, \mbox{ for all } t \geq 0. \qedhere
\end{equation*}
\end{proof}

\begin{proof}[\bf Proof of Theorem \ref{stochastic strichartz}] 

Let us set for all $s$ and $t \in \mathbb{R}_+$, 
\begin{equation}
\begin{aligned}
S^c_{\beta}(t,s):=S_{\beta}(t,s)P_c \text{ and } S^p_{\beta}(t,s):=S_{\beta}(t,s)P_p.
\end{aligned}
\end{equation}

\noindent Clearly, it is sufficient to prove the desired estimates for $S^c_{\beta}$ and $S^p_{\beta}$ separately. Concerning $S^c_{\beta}$, note first that from \ref{H1} and the fact that the deterministic group $(S(t))_{t\in \mathbb{R}}$ is an isometry on $L^2(\Gamma)$, by Riesz-Thorin interpolation theorem we get
\begin{equation*}
\| S^c(t-s) X_s\|_{L^p(\Gamma)} \lesssim {|t-s|^{\frac{1}{p}-\frac{1}{2}}}\| X_s\|_{L^{p'}(\Gamma)}, \quad \text{for all } X_s\in L^{p'}(\Gamma),
\end{equation*}
\noindent hence,
\begin{equation*}
\|S^c_{\beta}(t,s)X_s\|_{L^p(\Gamma)} \lesssim {|\beta(t)-\beta(s)|^{\frac{1}{p}-\frac{1}{2}}} \|X_s\|_{L^{p'}(\Gamma)}, \quad \text{for all } X_s \in L^{p'}(\Gamma) \ \mathbb{P}\mbox{-a.s.}
\end{equation*}

\noindent Then, the proofs of Propositions 3.7, 3.10 and 3.11 in \cite{debouarddebussche} work without any change to get the estimates for $S^c_{\beta}$. In the case of $S^p_{\beta}$, for all $t \in \mathbb{R_+}$,
\begin{equation*}
S^p_{\beta}(t)X_s = \sum_{j=1}^{|\sigma_p(-\Delta_{\Gamma})|} \mathrm{e}^{-\mathrm{i} \beta(t) \lambda_j} \langle X_s,\varphi_j \rangle_{L^2(\Gamma)} \varphi_j,
\end{equation*}

\noindent where $\lambda_j \in \mathbb{R}$ are the eigenvalues of $-\Delta_{\Gamma}$ and $\varphi_j$ the corresponding eigenfunctions, which by \cite[Section 3]{laplacians}, belong to $L^{\alpha}(\Gamma)$, for all $1 \leq \alpha \leq \infty$. Let $(r,p)$ be an admissible pair. For every $\alpha \geq 1$, similarly to \cite[Proof of Corollary 1] {grecuignatjphysa} we get
\begin{equation*}
\| S^p_{\beta}(t) X_s \|_{L^p(\Gamma)} \leq  \Bigg( \sum_{j=1}^{|\sigma_p(-\Delta_{\Gamma})|} \| \varphi_j \|_{L^{\alpha'}(\Gamma)} \| \varphi_j \|_{L^p(\Gamma)} \Bigg) \| X_s \|_{L^{\alpha}(\Gamma)},
\end{equation*}
\begin{equation} \label{Pp Strichartz}
\| S^p_{\beta}(\cdot,s) X_s \|_{L^r_{\omega}L^r_{[s,s+T]}L^p_x} \leq   \Bigg(\sum_{j=1}^{|\sigma_p(-\Delta_{\Gamma})|} \| \varphi_j \|_{L^{\alpha'}(\Gamma)} \| \varphi_j \|_{L^p(\Gamma)} \Bigg) T^{1/r} \| X_s \|_{L^r_{\omega}L^{\alpha}_x},
\end{equation}
\begin{equation} \label{Pp Strichartz 2}
\Big\| \int_s^{\cdot} S^p_{\beta}(\cdot,\sigma) f (\sigma) \, d\sigma \Big\|_{L^r_{\omega}L^r_{[s,s+T]}L^p_x} \leq   \Bigg(\sum_{j=1}^{|\sigma_p(-\Delta_{\Gamma})|} \| \varphi_j \|_{L^{\alpha'}(\Gamma)} \| \varphi_j \|_{L^p(\Gamma)} \Bigg) T^{1/r} \| f \|_{L^r_{\omega}L^1_{[s,s+T]},L^{\alpha}_x},
\end{equation}

\noindent which conclude the desired estimates.
\end{proof}

\subsection{Proofs of results from Subsection \ref{nonlinear equation}}

\begin{proof}[\bf Proof of Theorem \ref{solution truncated}] Denoting by $\mathcal{T} X^R$ the right hand side of \eqref{mild truncated}, the latter becomes $X^R= \mathcal{T} X^R$. The proof which relies on the Strichartz type estimates obtained in Theorem \ref{stochastic strichartz}, will be split in four steps.

\noindent {\it Step 1.} $\mathcal{T}$ is a contraction on the "space-time-omega" balls:
\begin{equation*}
E(T,a):=\{X \; \mathcal{F}_t-adapted: \; \vertiii{X} := |X|_{L^r_\omega L^{\infty}_{[0,T]}L_x^2}+|X|_{L^r_{\mathcal{P}}(\Omega \times [0,T];L^{p}_x)} \leq a\}
\end{equation*}

\noindent for some convenient $a$ and $T$, where $T$ does not depend on the initial data so that this procedure can be iterated to obtain a global solution.

\noindent {\it Step 2}.  The solution $X^R$ from {\it Step 1} possesses {extra integrability} properties: for any $T>0$, $\rho\leq r$ and $(\rho,q)$ admissible,
\begin{equation*}
X^{R}\in L^{\rho}_{[0,T]}L^{q}_x \; \; \; \mathbb{P}\mbox{-a.s.}
\end{equation*}

\noindent {\it Step 3.} $X^R$ has a version with trajectories in
$C([0,T];L^2_x) \; \; \; \mathbb{P}\mbox{-a.s.}$.

\noindent {\it Step 4.} For any $t>0$, 
\begin{equation*}
|X^R(t)|_{L^2_x}=|X_0|_{L^2_x} \; \; \;  \mathbb{P}\mbox{-a.s.}
\end{equation*}

\noindent{\it Proof of Step 1.} Note that $E(T,a)$ is a complete metric space. First we find $T$ and $a$ such that $\mathcal{T}:E(T,a) \to E(T,a)$ is well-defined. To do this, for any $t\in[0,T]$, by the Strichartz type estimates from Theorem \ref{stochastic strichartz}, we have
\begin{align*}
| \mathcal{T} X^R|_{L^r_\omega L^{\infty}_{[0,T]}L_x^2}  &\leq |X_0|_{L^r_\omega L_x^2}+\bigg| \int_0^t S_{\beta}(t,s)\theta_R(X^R)(s)|X^R(s)|^{2\sigma}X^R(s) \; ds \bigg| _{L^r_\omega L^{\infty}_{[0,T]}L_x^2} \\
&\leq |X_0|_{L^r_\omega L_x^2}+ c_{r,p} \;T^{\overline{\beta}}\; \big|\theta_R(X^R)|X^R|^{2\sigma+1} \; \big|_{L^{r}_{\omega} L^{r'}_{[0,T]} L^{p'}_x} \\
&=|X_0|_{L^r_\omega L_x^2}+ c_{r,p} \;T^{\overline{\beta}}\; \bigg| \bigg[ \int_0^T   \theta_R(X^R)^{r'}(t) \bigg( \int_{\Gamma} |X^R(t)|^{(2\sigma +1) p'} \; dx \bigg)^{\frac{r'}{p'}} \; dt \bigg]^{1/r'} \;  \bigg|_{L^{r}_{\omega}}\\
\intertext{Since $p'=\frac{2\sigma+2}{2\sigma+1}$, the right hand side equals}
&= |X_0|_{L^r_\omega L_x^2}+ c_{r,p} \;T^{\overline{\beta}}\; \bigg| \bigg( \int_0^T   \theta_R(X^R)^{r'}(t) |X^R(t)|^{(2\sigma +1) r'}_{L_x^{p}} \; dt \bigg)^{\frac{1}{r'}} \;  \bigg|_{L^{r}_{\omega}} \\
\intertext{Since $r \geq p=2\sigma+2$, we can apply {\it H\"{o}lder's inequality} to get}
&\leq |X_0|_{L^r_\omega L_x^2}+ c_{r,p} \;T^{\overline{\beta}}\; \bigg| \bigg( \int_0^T  [ \theta_R(X^R)(t) ]^{r/(2\sigma+1)} |X^R(t)|^{r}_{L_x^{p}} \; dt \bigg)^{\frac{2\sigma+1}{r}} T^{\frac{r-r'(2\sigma+1)}{rr'}} \;  \bigg|_{L^{r}_{\omega}} \\
\intertext{Letting $\widetilde{t}(\omega):= \inf \{s: \, |X^R(\omega)|_{L^r([0,s];L^p_x)} \geq 2R \}$, from definition of $\theta_R$ in \eqref{definition theta}, we have}
& = |X_0|_{L^r_\omega L_x^2}+ c_{r,p} \; T^{\tilde{\beta}} \; \bigg| \bigg( \int_0^{\widetilde{t}}  [ \theta_R(X^R)(t) ]^{r/(2\sigma+1)} |X^R(t)|^{r}_{L_x^{p}} \; dt \bigg)^{\frac{2\sigma+1}{r}} \;  \bigg|_{L^{r}_{\omega}} \\
\intertext{where $\tilde{\beta}=\overline{\beta}+{\frac{r-r'(2\sigma+1)}{rr'}} > 0$. Furthermore,}
& \leq |X_0|_{L^r_\omega L_x^2}+ c_{r,p} \; T^{\tilde{\beta}} \; \bigg| \bigg( \int_0^{\widetilde{t}} |X^R(t)|^{r}_{L_x^{p}} \; dt \bigg)^{\frac{2\sigma+1}{r}} \;  \bigg|_{L^{r}_{\omega}} \\
& \leq |X_0|_{L^r_\omega L_x^2}+ c_{r,p} \;T^{\tilde{\beta}}\; (2R)^{2\sigma} \; \big| X^R \big| _{L^{r}_{\omega} L^r_{[0,T]}L_x^{p}}.
\end{align*}

\noindent Similarly, using again the Strichartz type estimates,
\begin{align} \label{LrLrLpalready}
| \mathcal{T} X^R &|_{L^r_\omega L^r_{[0,T]}L_x^p} \nonumber \\ \nonumber
&\leq |S_{\beta}(t,0) X_0|_{L^r_\omega L^r_{[0,T]}L_x^p}+\bigg| \int_0^t S_{\beta}(t,s)\theta_R(X^R)(s)|X^R(s)|^{2\sigma}X^R(s) \; ds \bigg|_{L^r_\omega L^r_{[0,T]}L_x^p} \\ \nonumber
&\leq c_{r,p}\; T^{\beta/2}\; |X_0|_{L^r_\omega L_x^2}+c_{r,p} \; T^{\overline{\beta}}\;  \big|\theta_R(X^R)|X^R|^{2\sigma+1} \; \big|_{L^r_\omega L^{r'}_{[0,T]}L_x^{p'}} \\
&\leq c_{r,p}\; T^{\beta/2}\; |X_0|_{L^r_\omega L_x^2}+c_{r,p} \;T^{\tilde{\beta}}\; (2R)^{2\sigma} \; \big| X^R \big| _{L^r_\omega L^r_{[0,T]}L_x^p}. 
\end{align}

\noindent Hence, 
\begin{equation*}
\vertiii{\mathcal{T} X^R} \leq (1+ c_{r,p}\; T^{\beta/2})\; |X_0|_{L^r_\omega L_x^2}+2\; c_{r,p} \;T^{\tilde{\beta}}\; (2R)^{2\sigma} \; \big| X^R \big| _{L^r_\omega L^r_{[0,T]}L_x^p}. 
\end{equation*}

\noindent Letting $a:=2\;(1+ c_{r,p}\; T^{\beta/2})\; |X_0|_{L^r_\omega L_x^2}$, there exists $T \leq 1$ which depends only on $R, r, p$ s.t. $c_{r,p} \;T^{\tilde{\beta}}\; (2R)^{2\sigma} < \dfrac{1}{4}$, so that $\vertiii{\mathcal{T}X^R} \leq a$.

Now we prove that $\mathcal{T}$ is a contraction on $E(T,a)$. If $X$ and $Y$ $\in E(T,a)$, then by Strichartz estimates in Theorem \ref{stochastic strichartz},
\begin{align*}
\vertiii{\mathcal{T}X - \mathcal{T}Y} & =\bigg| \int_0^t S_{\beta}(t,s) \bigg[ \theta_R(X)(s)|X(s)|^{2\sigma}X(s) - \theta_R(Y)(s)|Y(s)|^{2\sigma}Y(s) \bigg] \; ds \bigg|_{L^r_\omega L^{\infty}_{[0,T]}L_x^2} \\
&+\bigg| \int_0^t S_{\beta}(t,s) \bigg[ \theta_R(X)(s)|X(s)|^{2\sigma}X(s) - \theta_R(Y)(s)|Y(s)|^{2\sigma}Y(s) \bigg] \; ds \bigg|_{L^r_\omega L^r_{[0,T]}L_x^p} \\
&\leq c_{r,p}\; T^{\overline{\beta}}\; \big| \theta_R(X)|X|^{2\sigma}X - \theta_R(Y)|Y|^{2\sigma}Y \big|_{L^r_\omega L^{r'}_{[0,T]}L_x^{p'}}. 
\end{align*}

\noindent By \cite[Proof of Theorem 4.1]{debouarddebussche} we have that
\begin{equation*}
 \big| \theta_R(X)|X|^{2\sigma}X - \theta_R(Y)|Y|^{2\sigma}Y \big|_{L^r_\omega L^{r'}_{[0,T]}L_x^{p'}} \leq C\;T^{\gamma}\; | X-Y |_{L^r_\omega L^r_{[0,T]}L_x^p}
\end{equation*}

\noindent with $\gamma=1-\frac{2\sigma+2}{r}$, hence 
\begin{equation*}
\vertiii{ \mathcal{T}X-\mathcal{T}Y} \leq c_{r,p}\; C\; T^{\tilde{\beta}+\gamma}\; \vertiii{X-Y},
\end{equation*}

\noindent which means that if $T \leq T_0$ for some sufficiently small $T_0$, then $\mathcal{T}$ is a strict contraction on $E(T,a)$. Since $T_0$ does not depend on the initial data, we can iterate the construction and obtain a global solution for the truncated equation.

\noindent{\it Proof of Step 2.} If $\rho \leq r$ and $(\rho,q)$ is an admissible pair, by Theorem \ref{stochastic strichartz} (i) and (ii.2) we get
\begin{equation*}
\begin{aligned}
|X^R|_{L^{\rho}_{\omega}L^{\rho}_{[0,T]}L_x^q} & \leq c_{\rho,q} T^{\beta/2} |X_0|_{L^{\rho}_{\omega}L^2_x} + \tilde{c}_{\rho,q}T^{\overline{\beta}} \big|\theta_R(X^R)|X^R|^{2\sigma+1} \; \big|_{L^{\rho}_{\omega} L^{r'}_{[0,T]} L^{p'}_x}\\
& \leq c_{\rho,q} T^{\beta/2} |X_0|_{L^{\rho}_{\omega}L^2_x} + \tilde{c}_{\rho,q}T^{\overline{\beta}} \big|\theta_R(X^R)|X^R|^{2\sigma+1} \; \big|_{L^r_\omega L^{r'}_{[0,T]}L_x^{p'}} \\
& \leq c_{\rho,q} T^{\beta/2} |X_0|_{L^{\rho}_{\omega}L^2_x} + \tilde{c}_{\rho,q}T^{\overline{\beta}} (2R)^{2\sigma} \; \big| X^R \big| _{L^r_\omega L^r_{[0,T]}L_x^p} < \infty.
\end{aligned}
\end{equation*}

\noindent{\it Proof of Step 3.} We claim that
\begin{equation} \label{commute group}
X^R(t)=S_{\beta}(t,0)X_0 + \mathrm{i} S_{\beta}(t,0) \int_0^t S_{\beta}(0,s) \theta_R(X^R)|X^R|^{2 \sigma} X^R(s) \, ds.
\end{equation}
Note that since $S_{\beta}(t,0) \in  \mathcal{L}(L^2_x,L^2_x)$, it is sufficient to check the Bochner integrability of the integrand. Indeed,
\begin{equation*}
\begin{aligned}
\int_0^T \big|S_{\beta}(0,s) \theta_R(X^R)|X^R|^{2 \sigma} X^R(s) \big|_{L^2_x} \, ds & = \int_0^T \big| \theta_R(X^R)|X^R|^{2 \sigma} X^R(s) \big|_{L^2_x} \, ds \\
& \leq \int_0^T \big| |X^R|^{2 \sigma+1}\big|_{L^2_x} \, ds = \int_0^T \Big[ \int_{\Gamma} |X^r|^{2(2 \sigma +1)} \, dx \Big]^{1/2} \, ds\\
& = |X^R|^{2\sigma+1}_{L^{2 \sigma+1}_{[0,T]} L^{2(2\sigma+1)}_x}
\end{aligned}
\end{equation*}

\noindent Since $\sigma < 2$, the exponent pair $(2\sigma+1, 2(2\sigma+1))$ is admissible and hence, by the additional integrability obtained at {\it Step 2}, 
\begin{equation*}
|X^R|_{L^{2 \sigma+1}_{\omega}L^{2 \sigma+1}_{[0,T]}L^{2(2\sigma+1)}_x} < \infty \text{ and hence }
|X^R|_{L^{2 \sigma+1}_{[0,T]} L^{2(2\sigma+1)}_x} < \infty \; \; \;  \mathbb{P}\mbox{-a.s.}
\end{equation*}

\noindent So,
\begin{equation} \label{bochner integrability}
\int_0^T \big|S_{\beta}(0,s) \theta_R(X^R)|X^R|^{2 \sigma} X^R(s) \big|_{L^2_x} \, ds \leq |X^R|^{2\sigma+1}_{L^{2 \sigma+1}_{[0,T]} L^{2(2\sigma+1)}_x} < \infty.
\end{equation}

\noindent Let now $Y(t):= S_{\beta}(0,t)X^R(t)$. Then, by \eqref{commute group}
\begin{equation*}
Y(t)=X_0 + \mathrm{i} \int_0^t S_{\beta}(0,s) \theta_R(X^R)|X^R|^{2 \sigma} X^R(s) \, ds \; \; \; \mathbb{P}\mbox{-a.s.} \, for \ all \ t \in[0,T].
\end{equation*}

\noindent Since the integrand belongs to $L^1([0,T];L^2_x)$, it follows that $(Y(t))_{t \in [0,T]}$ has a version with absolutely continuous trajectories in $[0,T]$ with values in $L^2_x$, which we again denote by $Y$. We show now that $(S_{\beta}(t,0)Y(t))_{t \in [0,T]}$, which is a version of $(X^R(t))_{t \in [0,T]}$, has continuous trajectories with values in $L^2_x$. We have that
\begin{equation*}
\begin{aligned}
\big|S_{\beta}(t,0)Y(t)&-S_{\beta}(s,0)Y(s)\big|_{L^2_x}\\ & \leq \big|S_{\beta}(t,0)Y(t) -S_{\beta}(s,0)Y(t)\big|_{L^2_x} + \big|S_{\beta}(s,0)Y(t)-S_{\beta}(s,0)Y(s)\big|_{L^2_x} \\
& = \big|[S_{\beta}(t,0)-S_{\beta}(s,0)] Y(t)\big|_{L^2_x} + \big|Y(t)-Y(s)\big|_{L^2_x}.
\end{aligned}
\end{equation*}

\noindent So, by the continuity of the group and the fact that $(Y(t))_{t \in [0,T]}$ is with absolutely continuous trajectories, we get the desired result.

\noindent{\it Proof of Step 4.} Note that since $S(t)$ is an isometry on $L^2_x$, the conservation of the $L^2_x$--norm of $X^R$ is equivalent with the $L^2_x$-norm conservation of $Y(t)$. Clearly, $\phi:[0,T] \to \mathbb{R}_+$ defined as
$
\phi(t):=|Y(t)|_{L^2_x}
$ is absolutely continuous, hence it is sufficient to show that $\phi'=0 \, \, \lambda- \mbox{a.e.}$ This holds true because
\begin{equation*}
\begin{aligned}
\frac{d}{dt} |Y(t)|_{L^{2}_x}^{2} &= 2 \Re \{\langle \frac{d}{dt}Y(t), Y(t) \rangle_{L^{2}_x}\}\\
& =2 \Re \{\mathrm{i} \langle S_{\beta}(0,t) \theta_R(X^R)|X^R|^{2\sigma}X^R(t),S_{\beta}(0,t) X^R(t) \rangle_{L^2_x}\} \\
& =2\Re \{\mathrm{i} \langle \theta_R(X^R)|X^R|^{2\sigma}X^R(t), X^R(t) \rangle_{L^2_x}\} = 0.
\end{aligned}
\end{equation*}
\noindent Hence, $
\phi(t)=\phi(0)$ for all $ t \in [0,T],
$
which completes our proof.
\end{proof}

\subsection{Proofs of results from Subsection \ref{convergence of approximate solutions}}

\begin{proof}[\bf{Proof of Theorem \ref{well-posedness}}]

\noindent \textbf{Well-posedness in} $\bm{C([0,T],L^2(\Gamma))}$. Consider the complete metric function space
\begin{equation*}
E(T,a)= \{u \in C([0,T],L^2(\Gamma)): \| u  \|_{C([0,T],L^2(\Gamma))} := \sup_{t \in [0,T]} \| u(t) \|_{L^2(\Gamma)} \leq a\}, 
\end{equation*}

\noindent where $T, a \in \mathbb{R}_+$ will be chosen later. Denote by $\uptau(u_n)(t)$ the right hand side of \eqref{mildform}.

First of all, note that if $u_n \in C([0,T]; L^2(\Gamma))$ then $F(u_n) \in L^{\infty}([0,T];L^2(\Gamma))$. Hence, we can use the same argument as in Step 3 of the proof of Theorem \ref{solution truncated} to deduce that $\uptau(u_n) \in C([0,T];L^2(\Gamma))$. Moreover,
\begin{equation*}
\| \uptau(u_n) \|_{C([0,T],L^2(\Gamma))} \leq \| u_0 \|_{L^2(\Gamma)} +  (2R^{\sigma}) \, T \,\|  u_n \|_{C([0,T],L^2(\Gamma))}.
\end{equation*}

\noindent Hence, taking for instance $a=2 \| u_0 \|_{L^2(\Gamma)}$ and $T \leq \dfrac{1}{2 (2 R)^{\sigma}}$, $\uptau:E(T,a) \to E(T,a)$ is well-defined.

Let now $u,v \in E(T,a)$, with $T$ and $a$ as before. Then,
\begin{equation*}
\begin{aligned}
\| \uptau(u)(t)-\uptau(v)(t) \|_{L^2(\Gamma)} & \leq \int_0^t \| F(u)-F(v) \|_{L^2(\Gamma)}\, ds \leq C(R,\sigma) \int_0^t \| u(s) -v(s) \|_{L^2(\Gamma}) \, ds,
\end{aligned}
\end{equation*}
\noindent Therefore,
\begin{equation*}
\| \uptau(u) -\uptau(v) \|_{C([0,T],L^2(\Gamma))} \leq C(R,\sigma) \, T \, \| u-v \|_{C([0,T],L^2(\Gamma))}.
\end{equation*}

\noindent Taking now $T \leq \min\bigg( \dfrac{1}{2 (2R)^{\sigma}}, \dfrac{1}{C(R,\sigma)} \bigg)$, we deduce that $\uptau :E(T,a) \to E(T,a)$ is a strict contraction, hence, there exists a unique solution $u_n \in C([0,T],L^2(\Gamma))$ to \eqref{mildform}. Since $T$ does depend only on the nonlinearity $F$, we can reiterate to get a global solution.

\noindent \textbf{Well-posedness in} $\bm{C([0,T],D(\mathcal{E}))}$. By the properties of $F$,
\begin{equation} \label{nonlinearity estimate}
\| F(u_n) \|_{D(\mathcal{E})} \leq C(R,\sigma) \| u_n \|_{D(\mathcal{E})}.
\end{equation}

\noindent Since $u_0 \in D(\mathcal{E}) \subset L^2(\Gamma)$, by the first part we have a unique solution $u_n \in C([0,T],L^2(\Gamma))$.

\noindent We show that $u_n$ belongs to $C([0,T],D(\mathcal{E}))$.
To this end, note that $u_n$ is the limit in $C([0,T];L^2(\Gamma))$ of the sequence $(u_n^k)_k$ constructed as follows:
\begin{equation*}
\begin{aligned}
&u_n^0=u_0\\
& u_n^{k+1}(\cdot)=\uptau(u_n^k)(\cdot)=S_{n}(\cdot,0)) u^0 + \mathrm{i} \int_{0}^{\cdot} S_{n} (\cdot,s) F(u_n^k)(s) \, ds, \quad \text{for all } k \geq 0. 
\end{aligned}
\end{equation*}

\noindent So, using \eqref{nonlinearity estimate} we get
\begin{equation*}
\begin{aligned}
\| u_n^{k+1} (t) \|_{D(\mathcal{E})} &\leq \| u^0 \|_{D(\mathcal{E})} + \int_0^t \| F(u_n^k(s)) \|_{D(\mathcal{E})} \, ds \leq \| u_n^0 \|_{D(\mathcal{E})} + C(R,\sigma) \int_0^t \| u_n^k(s) \|_{D(\mathcal{E})} \, ds.
\end{aligned}
\end{equation*}

\noindent Hence, if $v^k(t):= \| u_n^k(t) \|_{D(\mathcal{E})}$, we have for all $t \geq 0$ and $k \geq 0$
\begin{equation*}
v^{k+1}(t) \leq v^0 + C \int_0^t v^k(s) \, ds.
\end{equation*}

\noindent Hence, by \cite[Lemma 3.2 (i)]{debouarddebussche},
\begin{equation*}
\| u_n^k(t) \|_{D(\mathcal{E})} \leq \| u_0 \|_{D(\mathcal{E})} \mathrm{e}^{C(R,\sigma) T} , \quad \forall t \leq T.
\end{equation*}

\noindent Thus, for each fixed $t$, $(u_n^k(t))_k$ is a bounded sequence in the Hilbert space $D(\mathcal{E})$, hence there exists a subsequence $(u_n^{k_l})_l$ which is weakly convergent to some limit in $D(\mathcal{E})$. Since
\begin{equation*}
u_n^k(t) \underset{k \to \infty}\longrightarrow u_n(t) \in L^2(\Gamma),
\end{equation*}

\noindent we get that $u_n(t) \in D(\mathcal{E})$ for all $t \in [0,T]$. Since the norm on $D(\mathcal{E})$ is lower semi-continuous w.r.t. the weak topology, we also get
\begin{equation} \label{exponential estimate}
\| u_n(t) \|_{D(\mathcal{E})} \leq \|u_0\|_{D(\mathcal{E})} \mathrm{e}^{C(R,\sigma) T}, \quad \forall t \leq T.
\end{equation}

\noindent To show that $u_n \in C([0,T],D(\mathcal{E}))$, note first that $(S(t))_{t \in \mathbb{R}}$ is a continuous group on $D(\mathcal{E})$, since for all $u_0 \in D(\mathcal{E})$ and $t_k \to 0$, by the spectral representation and the spectral measure property \eqref{spectralmeasureproperty},
\begin{equation*}
\|S(t_k)u_0 - S(t)u_0  \|^2_{D(\mathcal{E})} = \int_{\sigma(-\Delta_{\Gamma})} (\lambda+M) |1 - cos[(t_k-t)\lambda]| \, d \mu_{u_0,u_0} (\lambda)
\end{equation*}

\noindent converges to $0$ when $k \to \infty$ by dominated convergence, since $\lambda + M \in L^1(\sigma(-\Delta_{\Gamma}), d\mu_{u_0,u_0})$ because $u_0 \in D(\mathcal{E})$.

\noindent Then, the desired continuity follows by dominated convergence theorem and \eqref{nonlinearity estimate}, since  
\begin{align*}
\| S(t_n-s) F(u(s))\|_{D(\mathcal{E})} &\leq \| F(u(s)) \|_{D(\mathcal{E})} \leq C(R,\sigma) \|u_0\|_{D(\mathcal{E})} \mathrm{e}^{C(R,\sigma) T} \\
& \leq C(R,\sigma) \| u_0\|_{D(\mathcal{E})} \mathrm{e}^{C(R,\sigma) T} \in L^1([0,T]). \qedhere
\end{align*} 
\end{proof}

\begin{proof}[\bf Proof of Theorem \ref{convergence}] Denote by $\beta_{\varepsilon}(\cdot):= \int_0^{\cdot} \frac{1}{\varepsilon} m\big(\frac{s}{\varepsilon^2}\big) \, ds$, with the process $m$ as in \ref{H0}. Since by hypothesis, $\beta_{\varepsilon}$ converges to $\beta$ converges in distribution on $C([0,T];\mathbb{R})$ for all $T>0$ it is sufficient to show that the mapping
\begin{equation*}
C([0,T];\mathbb{R}) \ni n \longmapsto u_n \in C([0,T];D(\mathcal{E})) 
\end{equation*}

\noindent is continuous, where $u_n$ is the solution of \eqref{mildform}. First of all, since $\| \cdot \|_{D(\mathcal{E})}$ is equivalent with $\| \cdot \|_{H^1}$ \eqref{equivnorms}, one can easily check that
\begin{equation*}
\|F(v) - F(w) \|_{D(\mathcal{E})} \leq C(R,\sigma)(1+ \| w \|_{D(\mathcal{E})}) \| v- w \|_{D(\mathcal{E})},
\end{equation*}

\noindent for all $v$ and $w$ in $D(\mathcal{E})$. Consequently, if $n_k \underset{k \to \infty}\longrightarrow n \in C([0,T];\mathbb{R})$, using also the fact that $S(t)$ is an isometry on $D(\mathcal{E})$,
\begin{equation*}
\begin{aligned}
\| u_{n_k}(t) - u_n(t) \|_{D(\mathcal{E})} & \leq \int_0^t \Big\| [S_{n_k} (t,s) - S_{n} (t,s) ] F(u_n)(s) \Big\|_{D(\mathcal{E})} \, ds \\
& \quad + \int_0^t \Big\| S_{n_k} (t,s) [ F(u_{n_k})(s) - F(u_n)(s) ]  \Big\|_{D(\mathcal{E})} \, ds \\
& \leq \int_0^T \Big\| [S_{n_k} (\cdot,s) - S_{n} (\cdot,s) ] F(u_n)(s) \Big\|_{C([0,T];D(\mathcal{E}))} \, ds \\
& \quad + C(R,\sigma) [1+ \| u_n \|_{C([0,T];D(\mathcal{E}))}] \int_0^t \| u_{n_k}(s)- u_n(s) \|_{D(\mathcal{E})} \, ds.
\end{aligned}
\end{equation*}

\noindent If we denote the first term in r.h.s. of the last inequality by $a_k$, we get by Gr\"{o}nwall lemma
\begin{equation*}
\| u_{n_k} - u_n \|_{C([0,T];D(\mathcal{E}))} \leq a_k \mathrm{e}^{C(R,\sigma) [1+ \| u_n \|_{C([0,T];D(\mathcal{E}))}]T}.
\end{equation*}

\noindent But $a_k \mathop{\longrightarrow}\limits_{k \to \infty} 0 $ by dominated convergence, since
\begin{equation*}
\begin{aligned}
\Big\| [S_{n_k} (\cdot,s) - S_{n} (\cdot,s) ] F(u_n)(s) \Big\|_{C([0,T];D(\mathcal{E}))} \leq 2 \| F(u_n)(s) \|_{D(\mathcal{E})} \leq 2 |f|_{\infty} \| u_n \|_{C([0,T];D(\mathcal{E}))}
\end{aligned}
\end{equation*}
\noindent and 
\begin{equation*}
\begin{aligned}
\Big\| [S_{n_k} (\cdot,s) & - S_{n} (\cdot,s) ] F(u_n)(s) \Big\|^2_{C([0,T];D(\mathcal{E}))} \\
& \leq 2 \int_{\sigma(-\Delta_{\Gamma})} (\lambda + M) [2 \wedge |\lambda| | n_k(\cdot) - n_k(s) - (n(\cdot) - n(s)) | ] \, d \mu_{F(u_n),F(u_n)} (\lambda)
\end{aligned}
\end{equation*}

\noindent which converges to $0$ for all $s \in [0,T]$, again by dominated convergence since $F(u_n) \in D(\mathcal{E})$.
\end{proof}

\subsection{Proofs of results from Subsection \ref{well posedness star-graph}}

\begin{proof}[\bf Proof of Proposition \ref{derivativesemigroup}]

We need the following claim whose proof is postponed right after the proof of Proposition \ref{derivativesemigroup}.

\begin{claim} \label{claim1} If $u_0 \in D(\mathcal{E}_{\star})$, then
$\lim_{\varepsilon \to 0}\mathrm{e}^{-(\mathrm{i}t + \varepsilon)H_{\star}}u_0 =\mathrm{e}^{- \mathrm{i}t H_{\star}} u_0  \ \text{ in } H^1(\Gamma).$
\end{claim}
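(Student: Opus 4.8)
The plan is to reduce the claim to a statement in the spectral calculus of the self-adjoint operator $H_\star$. By Proposition \ref{equivnorms} the norm $\|\cdot\|_{D(\mathcal{E}_\star)}$ defined in \eqref{formnorm} is equivalent to $\|\cdot\|_{W^{1,2}(\Gamma)}$, so it is enough to establish the convergence in $D(\mathcal{E}_\star)$. Since by Proposition \ref{prop 2.13} the operator $H_\star$ has no positive eigenvalues and only finitely many negative ones, it is bounded below; accordingly I fix $M$ as in \eqref{formnorm} large enough that $\lambda + M \geq 0$ for every $\lambda \in \sigma(H_\star)$. Writing $\mu_{f,f}$ for the spectral measure of $H_\star$ associated with $f$, one then has, exactly as in \eqref{spectralmeasureproperty},
\[
\|f\|^2_{D(\mathcal{E}_\star)} = \int_{\sigma(H_\star)} (\lambda + M)\, d\mu_{f,f}(\lambda), \qquad f \in D(\mathcal{E}_\star).
\]

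First I would express the difference through the functional calculus. Setting $g_\varepsilon(\lambda) := \mathrm{e}^{-\mathrm{i}t\lambda}(\mathrm{e}^{-\varepsilon \lambda}-1)$, one has
\[
\mathrm{e}^{-(\mathrm{i}t+\varepsilon)H_\star}u_0 - \mathrm{e}^{-\mathrm{i}t H_\star}u_0 = g_\varepsilon(H_\star)\,u_0.
\]
Because $\lambda \mapsto g_\varepsilon(\lambda)$ is bounded on $\sigma(H_\star)$ (which is bounded below) and commutes with $(H_\star+M)^{1/2}$, the vector $g_\varepsilon(H_\star)u_0$ again belongs to $D(\mathcal{E}_\star) = D((H_\star+M)^{1/2})$, and the spectral representation of the norm gives
\[
\big\| \mathrm{e}^{-(\mathrm{i}t+\varepsilon)H_\star}u_0 - \mathrm{e}^{-\mathrm{i}t H_\star}u_0 \big\|^2_{D(\mathcal{E}_\star)} = \int_{\sigma(H_\star)} (\lambda+M)\, \big| \mathrm{e}^{-\varepsilon\lambda}-1 \big|^2 \, d\mu_{u_0,u_0}(\lambda),
\]
where I used $|\mathrm{e}^{-\mathrm{i}t\lambda}| = 1$ for real $\lambda$.

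It then remains to let $\varepsilon \to 0^+$ by dominated convergence, splitting $\sigma(H_\star)$ into its nonnegative part and the finite set $\{\lambda_l\}$ of negative eigenvalues. On $[0,\infty)$ one has $0 \leq 1 - \mathrm{e}^{-\varepsilon\lambda} \leq 1$, so the integrand is bounded by $\lambda + M$, which is $\mu_{u_0,u_0}$-integrable precisely because $u_0 \in D(\mathcal{E}_\star)$, and it tends to $0$ pointwise; on the finite negative part the integral reduces to the finite sum $\sum_l (\lambda_l + M)\,|\mathrm{e}^{-\varepsilon\lambda_l}-1|^2\, |\langle u_0,\varphi_l\rangle|^2$, each term vanishing as $\varepsilon \to 0$. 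Hence the right-hand side converges to $0$, which proves the claim. The only point requiring genuine care --- and thus the main obstacle --- is the justification of the norm identity above together with the fact that $\lambda + M$ serves as an $\varepsilon$-independent integrable majorant on the nonnegative spectrum; the negative part is harmless since it is controlled by a finite sum.
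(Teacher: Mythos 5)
Your proof is correct and follows essentially the same route as the paper's: reduce to convergence in the $D(\mathcal{E}_{\star})$-norm via Proposition \ref{equivnorms}, rewrite the squared norm of the difference through the spectral measure using the identity \eqref{spectralmeasureproperty}, and conclude by dominated convergence with the majorant $\lambda+M \in L^1(\sigma(H_{\star}), d\mu_{u_0,u_0})$. Your explicit splitting of $\sigma(H_{\star})$ into its nonnegative part and the finitely many negative eigenvalues makes the domination slightly more careful than the paper's one-line justification, but the argument is the same.
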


\noindent As in the proof of Proposition \ref{prop 2.13}, let $n_+(AB^{\dag})$ be the number of (negative) eigenvalues $(\lambda_l)_l$ of $H_{\star}$ and denote by $(\varphi_l)_l$ the corresponding eigenfunctions. By \cite[Proposition 3.2]{grecuignatjphysa} and e.g. \cite[Theorem VII.11]{reedsimon}, the continuous spectrum of $H_{\star}$ is $[0,\infty)$. Consequently, we have the following representation:
\begin{equation*}
\mathrm{e}^{-(\mathrm{i} t +\varepsilon)H_{\star}} u_0(x)= \frac{1}{2 \pi \mathrm{i}} \lim_{\delta \to 0} \int_0^{\infty} \mathrm{e}^{-(\mathrm{i}t+\varepsilon)\lambda} [R_{\lambda+ \mathrm{i} \delta} - R_{\lambda-\mathrm{i}\delta}] u_0 (x) \, d\lambda + \sum_{l=1}^{n_+(AB^{\dag})} \langle u_0,\varphi_l \rangle \mathrm{e}^{-(\mathrm{i}t + \varepsilon) \lambda_l} \varphi_l
\end{equation*}

\noindent and the limit is in $L^2(\Gamma_{\star})$, where $R_z := (H_{\star}-z)^{-1}$. Taking into account the explicit kernel obtained in \cite[Lemma 4.2]{laplacians}, we arrive at:
\begin{equation*}
\begin{aligned}
&\mathrm{e}^{-(\mathrm{i}t + \varepsilon)H_{\star}} u_0 (x_i)\\ & = \dfrac{1}{2 \pi \mathrm{i}} \lim_{\delta \to 0} \int_0^{\infty} \mathrm{e}^{-(\mathrm{i}t + \varepsilon) \lambda} \Big\{\sum_{j=1}^n \int_{I_j} [ r(\sqrt{\lambda + \mathrm{i} \delta}; x_i,y_j) - r(\sqrt{\lambda - \mathrm{i} \delta};x_i,y_j) ] u_0(y_j) \, dy_j \Big\}  \, d\lambda\\
& + \sum_{l=1}^{n_+(AB^{\dag})} \langle u_0,\varphi_l \rangle \mathrm{e}^{-(\mathrm{i}t + \varepsilon) \lambda_l} \varphi_l(x),
\end{aligned}
\end{equation*}

\noindent with the matrix kernel $r(z;x;y)$ s.t. $r(z;x_i,y_j)= \dfrac{\mathrm{i}}{2 z} \mathrm{e}^{\mathrm{i}z |x_i-y_j|} \delta_{ij} + \dfrac{\mathrm{i}}{2z} \mathrm{e}^{\mathrm{i}z x_i} [G(z;A,B)]_{i,j} \mathrm{e}^{\mathrm{i} z y_j}$,

\noindent where $z^2 \in \mathbb{C} \setminus \sigma(H_{\star})$, $x_i$ and $y_j$ belong to the edges $I_i$ and $I_j$ respectively, $i,j=\overline{1,n}$ and the matrix $G$ is given by $G(z;A,B)=-(A+\mathrm{i}zB)^{-1}(A-\mathrm{i}zB)$.

\begin{claim} \label{claim2} If $u_0 \in D(\mathcal{E}_{\star})$, then for all $ 1 \leq i \leq n$,
\begin{equation*}
\begin{aligned}
&\partial_{x_i} (\mathrm{e}^{-(\mathrm{i}t + \varepsilon)H_{\star}} u_0(x_i)) \\
& =\dfrac{1}{2 \pi \mathrm{i}} \lim_{\delta \to 0} \partial_{x_i} \int_0^{\infty} \mathrm{e^{-(\mathrm{i}t + \varepsilon) \lambda}} \Big\{ \sum_{j=1}^n \int_{I_j} [r(\sqrt{\lambda + \mathrm{i} \delta}; x_i,y_j) - r(\sqrt{\lambda - \mathrm{i} \delta};x_i,y_j) ] u_0(y_j) \, dy_j  \Big\} \, d\lambda \\
&+ \sum_{l=1}^{n_+(AB^{\dag}} \langle u_0,\varphi_l \rangle \mathrm{e}^{-(\mathrm{i}t + \varepsilon) \lambda_l} \varphi'_l(x_i).
\end{aligned}
\end{equation*}
\end{claim}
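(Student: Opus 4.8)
The plan is to split the representation into the finite eigenfunction sum and the continuous--spectrum integral, and to reduce the whole statement to a single interchange of $\partial_{x_i}$ with $\lim_{\delta\to0}$. The eigenfunction part is a finite linear combination of the $\varphi_l$, each smooth on every edge, so $\partial_{x_i}$ acts on it term by term and reproduces $\sum_l\langle u_0,\varphi_l\rangle\mathrm{e}^{-(\mathrm{i}t+\varepsilon)\lambda_l}\varphi_l'(x_i)$ with no limit involved. Writing $I_\delta(x_i)$ for the continuous--spectrum integral, what remains is precisely the identity $\partial_{x_i}\big(\lim_{\delta\to0}I_\delta\big)=\lim_{\delta\to0}\partial_{x_i}I_\delta$, which I would deduce from the classical theorem on differentiation of a convergent family: it suffices that $I_\delta\to I_0$ pointwise on $\overline{I_i}$ and that $\partial_{x_i}I_\delta$ converges \emph{uniformly} on $\overline{I_i}$ as $\delta\to0$.

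Both ingredients I would extract from the smoothing effect of $\varepsilon>0$. Since $H_\star$ is bounded below with only finitely many (negative) eigenvalues and continuous spectrum $[0,\infty)$, the weight $\lambda\mapsto\lambda^k\mathrm{e}^{-(\mathrm{i}t+\varepsilon)\lambda}$ is bounded on $\sigma(H_\star)$ for every $k$, so $\mathrm{e}^{-(\mathrm{i}t+\varepsilon)H_\star}u_0\in D(H_\star^k)$ for all $k$; in particular its continuous part $I_0$ lies in $H^2(\Gamma_\star)$, and for fixed $\delta>0$ the integrand of $I_\delta$ is genuinely differentiable in $x_i$, so $\partial_{x_i}I_\delta$ is well defined. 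Applying the spectral representation \eqref{spectral} not only to $I_\delta$ but also, with the extra bounded weight $\lambda$, to $H_\star I_\delta$, the same Stone-type argument that gives $I_\delta\to I_0$ in $L^2(\Gamma_\star)$ also gives $H_\star I_\delta\to H_\star I_0$ in $L^2(\Gamma_\star)$; by ellipticity of $-d^2/dx^2$ with the coupling conditions (so that the graph norm is equivalent to $\|\cdot\|_{H^2(\Gamma_\star)}$), this upgrades to convergence $I_\delta\to I_0$ in $H^2(\Gamma_\star)$.

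Finally, each edge of $\Gamma_\star$ is an interval or a half-line, so the one--dimensional Sobolev embedding $H^2(I_j)\hookrightarrow C^1(\overline{I_j})$ turns the $H^2$-convergence into uniform convergence of both $I_\delta$ and $\partial_{x_i}I_\delta$ on $\overline{I_i}$. The differentiation theorem then yields $\partial_{x_i}I_0=\lim_{\delta\to0}\partial_{x_i}I_\delta$, and adding back the differentiated eigenfunction sum gives the claim.

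The hard part will be the uniform, $\delta$-independent control near the spectral threshold $\lambda=0$. If one prefers to argue directly on the explicit kernel of \cite[Lemma 4.2]{laplacians} rather than through the graph norm, the essential mechanism is that the $x_i$-derivative cancels the singular prefactor $1/z$ of $r$,
\[
\partial_{x_i}\!\Big(\tfrac{\mathrm{i}}{2z}\,\mathrm{e}^{\mathrm{i}z|x_i-y_j|}\Big)=-\tfrac12\,\sign(x_i-y_j)\,\mathrm{e}^{\mathrm{i}z|x_i-y_j|},
\]
so the differentiated integrand no longer carries the $z^{-1}$ singularity responsible for the delicate behaviour at $z=\sqrt{\lambda\pm\mathrm{i}\delta}\to0$; combined with the factor $\mathrm{e}^{-\varepsilon\lambda}$ and the outgoing branch $\mathrm{Im}\sqrt{\lambda\pm\mathrm{i}\delta}\ge0$ (which makes $|\mathrm{e}^{\mathrm{i}zw}|\le1$ for $w\ge0$) one then seeks a $\delta$- and $x_i$-uniform integrable majorant and passes to the limit by dominated convergence. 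The technical crux of that route is the uniform boundedness of the scattering matrix $G(\sqrt{\lambda\pm\mathrm{i}\delta};A,B)=-(A+\mathrm{i}zB)^{-1}(A-\mathrm{i}zB)$ for small $\delta$ up to $\lambda=0$, which rests on the self-adjointness structure of Theorem~\ref{self-adjointconditions} and the standing hypothesis $P_R=0$. In either approach I would stress that $\varepsilon>0$ is indispensable: it is the decay $\mathrm{e}^{-\varepsilon\lambda}$ that secures $\delta$-uniform integrability once the compensating $1/z$ has been spent on the derivative, which is exactly why the claim is stated for $\mathrm{e}^{-(\mathrm{i}t+\varepsilon)H_\star}$.
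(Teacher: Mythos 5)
Your proposal is correct in outline, but it takes a genuinely different and heavier route than the paper. The paper never aims for $H^2$ or $C^1$ convergence: it observes that the claim amounts to showing that the Stone-formula approximants converge to $\mathrm{e}^{-(\mathrm{i}t+\varepsilon)H_{\star}}u_0$ in $D(\mathcal{E}_{\star})$, whose norm is equivalent to $H^1(\Gamma_{\star})$, and convergence in $H^1$ already lets $\partial_{x_i}$ pass through the limit with the limit of the derivatives taken in $L^2$ --- which is all the stated identity requires. That convergence follows in two lines from the spectral-measure identity \eqref{spectralmeasureproperty}: the form norm of the difference is $\int_{\sigma(H_{\star})}(\lambda+M)\,|g^{\varepsilon}_{\delta}(\lambda)-g^{\varepsilon}(\lambda)|^2\,d\mu_{u_0,u_0}(\lambda)$, and dominated convergence applies because $u_0\in D(\mathcal{E}_{\star})$ makes $\lambda+M$ integrable against $d\mu_{u_0,u_0}$. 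Your route buys a stronger conclusion (uniform convergence of $I_\delta$ and $\partial_{x_i}I_\delta$ on each edge, and it uses only the smoothing from $\varepsilon>0$ rather than $u_0\in D(\mathcal{E}_{\star})$), but at a cost you currently leave implicit: the identification of $H_{\star}I_\delta$ with ``the spectral integral with the extra weight $\lambda$'' is not automatic, since commuting $H_{\star}$ with the $\lambda$-integral of resolvents produces the extra term $\tfrac{\delta}{2\pi}\int_0^{\infty}\mathrm{e}^{-(\mathrm{i}t+\varepsilon)\lambda}\bigl[R_{\lambda+\mathrm{i}\delta}+R_{\lambda-\mathrm{i}\delta}\bigr]u_0\,d\lambda$, which must separately be shown to vanish; equivalently, you should phrase everything through bounded functional calculus as $(\mathrm{id}\cdot g^{\varepsilon}_{\delta})(H_{\star})u_0$ and verify that $\lambda g^{\varepsilon}_{\delta}(\lambda)$ is bounded uniformly in $\delta$ and in $\lambda\in\sigma(H_{\star})$ (this is precisely where $\mathrm{e}^{-\varepsilon\lambda}$ enters). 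You also need the standard equivalence of the graph norm of $H_{\star}$ with $\|\cdot\|_{H^2(\Gamma_{\star})}$ and the edgewise embedding $H^2\hookrightarrow C^1$. Your alternative kernel-based route is likewise viable but is not what the paper does either: the explicit resolvent kernel is used only after this claim, to compute the limit, not to justify the interchange of $\partial_{x_i}$ with $\lim_{\delta\to 0}$. None of this is a fatal gap, but the functional-calculus step is the one you should not omit in a complete write-up.
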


Let us now deal with the first term in the above r.h.s. and notice that
\begin{align} \label{integrandI}
\partial_{x_i} \nonumber & \int_0^{\infty} \mathrm{e^{-(\mathrm{i}t + \varepsilon) \lambda}} \Big\{ \sum_{j=1}^n  \int_{I_j} [r(\sqrt{\lambda + \mathrm{i} \lambda}; x_i,y_j) - r(\sqrt{\lambda - \mathrm{i} \delta};x_i,y_j) ] u_0(y_j) \, dy_j  \Big\} \, d\lambda  \\ \nonumber
&= \int_0^{\infty} \mathrm{e^{-(\mathrm{i}t + \varepsilon) \lambda}} \Big\{ \sum_{j=1}^n  \int_{I_j} \partial_{x_i} [r(\sqrt{\lambda + \mathrm{i} \delta}; x_i,y_j) - r(\sqrt{\lambda - \mathrm{i} \delta};x_i,y_j) ] u_0(y_j) \, dy_j  \Big\} \, d\lambda \\ 
&=: \int_0^{\infty} \mathrm{e^{-(\mathrm{i}t + \varepsilon) \lambda}} \sum_{j=1}^n [I_{(j)}(\sqrt{\lambda + \mathrm{i} \delta}; x_i) - I_{(j)}(\sqrt{\lambda - \mathrm{i} \delta};x_i) ] \, d\lambda. 
\end{align}

\noindent We treat now the integrands $I_{(j)}(\sqrt{\lambda \pm \mathrm{i} \delta};x_i)$:
\begin{align*}
&I_{(j)} (\sqrt{\lambda \pm \mathrm{i} \delta}; x_i) =\int_{I_j} \partial_x r(\sqrt{\lambda \pm \mathrm{i} \delta};x_i,y_j) u_0(y_j) \, dy_j\\
&= \int_{I_j} \partial_{x_i} \Bigg[ \frac{\mathrm{i}}{2 \sqrt{\lambda \pm \mathrm{i} \delta}} \mathrm{e}^{\mathrm{i} \sqrt{\lambda \pm \mathrm{i} \delta} |x_i-y_j|} \delta_{ij} + \dfrac{\mathrm{i}}{2 \sqrt{\lambda \pm \mathrm{i} \delta}} \mathrm{e}^{\mathrm{i} \sqrt{\lambda \pm \mathrm{i} \delta} x_i} [G(\sqrt{\lambda \pm \mathrm{i} \delta}; A,B)]_{i,j} \mathrm{e}^{\mathrm{i} \sqrt{\lambda \pm \mathrm{i} \delta} y_j} \Bigg] u_0(y_j) \, dy_j \\
&=\int_{I_j} \Bigg[ \frac{\mathrm{i}}{2 \sqrt{\lambda \pm \mathrm{i} \delta}} \mathrm{i} \sqrt{\lambda \pm \mathrm{i} \delta} \sign(x_i-y_j) \mathrm{e}^{\mathrm{i} \sqrt{\lambda \pm \mathrm{i} \delta} |x_i-y_j|} \delta_{ij}\\
& \qquad +\dfrac{\mathrm{i}}{2 \sqrt{\lambda \pm \mathrm{i} \delta}}  \mathrm{i} \sqrt{\lambda \pm \mathrm{i} \delta} \mathrm{e}^{\mathrm{i} \sqrt{\lambda \pm \mathrm{i} \delta} x_i} [G(\sqrt{\lambda \pm \mathrm{i} \delta}; A,B)]_{i,j} \mathrm{e}^{\mathrm{i} \sqrt{\lambda \pm \mathrm{i} \delta} y_j} \Bigg] u_0(y_j) \, dy_j \\
&=  \int_{I_j} \partial_{y_j} \Bigg[ \frac{-\mathrm{i}}{2 \sqrt{\lambda \pm \mathrm{i} \delta}} \mathrm{e}^{\mathrm{i} \sqrt{\lambda \pm \mathrm{i} \delta} |x_i-y_j|} \delta_{ij} + \dfrac{\mathrm{i}}{2 \sqrt{\lambda \pm \mathrm{i} \delta}} \mathrm{e}^{\mathrm{i} \sqrt{\lambda \pm \mathrm{i} \delta} x_i} [G(\sqrt{\lambda \pm \mathrm{i} \delta}; A,B)]_{i,j} \mathrm{e}^{\mathrm{i} \sqrt{\lambda \pm \mathrm{i} \delta} y_j} \Bigg] u_0(y_j) \, dy_j.
\end{align*}

\noindent Integrating by parts, we continue with
\begin{equation*}
\begin{aligned}
&= \dfrac{\mathrm{i}}{2 \sqrt{\lambda \pm \mathrm{i} \delta}} \mathrm{e}^{\mathrm{i} \sqrt{\lambda \pm \mathrm{i} \delta} x_i} \Big[ \delta_{ij} - [G(\sqrt{\lambda \pm \mathrm{i} \delta}; A, B)]_{i,j}   \Big] u_{0,j}(0)\\
&- \int_{I_j} \Bigg[ \frac{-\mathrm{i}}{2 \sqrt{\lambda \pm \mathrm{i} \delta}} \mathrm{e}^{\mathrm{i} \sqrt{\lambda \pm \mathrm{i} \delta} |x_i-y_j|} \delta_{ij} + \dfrac{\mathrm{i}}{2 \sqrt{\lambda \pm \mathrm{i} \delta}} \mathrm{e}^{\mathrm{i} \sqrt{\lambda \pm \mathrm{i} \delta} x_i} [G(\sqrt{\lambda \pm \mathrm{i} \delta}; A,B)]_{i,j} \mathrm{e}^{\mathrm{i} \sqrt{\lambda \pm \mathrm{i} \delta} y_j} \Bigg] u'_{0,j}(y_j) \, dy_j
\end{aligned}
\end{equation*}

\noindent Hence, 
\begin{align*}
I_{(j)}(\sqrt{\lambda + \mathrm{i} \delta}; & x_i)- I_{(j)} (\sqrt{\lambda - \mathrm{i} \delta}; x_i) = \dfrac{\mathrm{i}}{2 \sqrt{\lambda + \mathrm{i} \delta}} \mathrm{e}^{\mathrm{i} \sqrt{\lambda + \mathrm{i} \delta} x_i} \Big[ \delta_{ij} - [G(\sqrt{\lambda + \mathrm{i} \delta}; A, B)]_{i,j}   \Big] u_{0,j} (0)\\
& \quad \qquad \qquad \qquad \qquad - \dfrac{\mathrm{i}}{2 \sqrt{\lambda - \mathrm{i} \delta}} \mathrm{e}^{\mathrm{i} \sqrt{\lambda - \mathrm{i} \delta} x_i} \Big[ \delta_{ij} - [G(\sqrt{\lambda - \mathrm{i} \delta}; A, B)]_{i,j}  \Big] u_{0,j} (0) \\
& - \int_{I_j} [ r(\sqrt{\lambda + \mathrm{i} \delta}; x_i,y_j) - r(\sqrt{\lambda - \mathrm{i} \delta};x_i,y_j)  ] u'_{0,j}(y_j) \, dy_j \\
&  +2 \int_{I_j} \frac{\mathrm{i}}{2 \sqrt{\lambda + \mathrm{i} \delta}} \mathrm{e}^{\mathrm{i} \sqrt{\lambda + \mathrm{i} \delta} |x_i-y_j|} \delta_{ij} u'_{0,j}(y_j) - \dfrac{\mathrm{i}}{2 \sqrt{\lambda - \mathrm{i} \delta}} \mathrm{e}^{\mathrm{i} \sqrt{\lambda - \mathrm{i} \delta} |x_i-y_j|} \delta_{ij} u'_{0,j}(y_j) \, dy_j.
\end{align*}

\noindent Substituting this back in \eqref{integrandI}, we get that
\begin{align} \label{threelimits}
& \partial_{x_i} ( \mathrm{e}^{- (\mathrm{i} t + \varepsilon )H_{\star}} u_0(x_i) ) = \dfrac{1}{2 \pi \mathrm{i}} \lim_{\delta \to 0} \int_0^{\infty} \mathrm{e}^{-(\mathrm{i} t + \varepsilon) \lambda} \sum_{j=1}^n \Big\{  \dfrac{\mathrm{i}}{2 \sqrt{\lambda + \mathrm{i} \delta}} \mathrm{e}^{\mathrm{i} \sqrt{\lambda + \mathrm{i} \delta} x_i} [ \delta_{ij} - [G(\sqrt{\lambda + \mathrm{i} \delta}; A, B)]_{i,j} ] \nonumber \\ \nonumber
& \qquad \qquad \qquad \qquad \qquad - \dfrac{\mathrm{i}}{2 \sqrt{\lambda - \mathrm{i} \delta}} \mathrm{e}^{\mathrm{i} \sqrt{\lambda - \mathrm{i} \delta} x_i} [ \delta_{ij} - [G(\sqrt{\lambda - \mathrm{i} \delta};A,B)]_{i,j} ] \Big\} u_{0,j}(0) \, d \lambda \\ \nonumber
& - \dfrac{1}{2 \pi \mathrm{i}} \lim_{\delta \to 0} \int_0^{\infty} \mathrm{e}^{-(\mathrm{i} t + \varepsilon) \lambda} \sum_{j=1}^n \int_{I_j} [ r(\sqrt{\lambda + \mathrm{i} \delta}; x_i,y_j) - r(\sqrt{\lambda - \mathrm{i} \delta};x_i,y_j)  ] u'_{0,j}(y_j) \, dy_j  \, d\lambda\\ \nonumber
& + \dfrac{1}{2 \pi \mathrm{i}} \lim_{\delta \to 0} \int_0^{\infty} 2 \mathrm{e}^{-(\mathrm{i} t + \varepsilon ) \lambda}  \int_{I_i} \frac{\mathrm{i}}{2 \sqrt{\lambda + \mathrm{i} \delta}} \mathrm{e}^{\mathrm{i} \sqrt{\lambda + \mathrm{i} \delta} |x_i-y_i|} u'_{0,i}(y_i) - \dfrac{\mathrm{i}}{2 \sqrt{\lambda - \mathrm{i} \delta}} \mathrm{e}^{\mathrm{i} \sqrt{\lambda - \mathrm{i} \delta} |x_i-y_i|} u'_{0,i}(y_i) \, dy_i \, d\lambda\\
& =:J_1+J_2+J_3
\end{align}

\noindent By the spectral theorem, $J_2 = \mathrm{e}^{-(\mathrm{i} t + \varepsilon) H_{\star}} P_c u'_0(x_i)$. By \eqref{extended},
\begin{equation*}
\begin{aligned}
J_3= \dfrac{1}{2 \pi \mathrm{i}} \lim_{\delta \to 0} \int_0^{\infty} 2 \mathrm{e}^{-(\mathrm{i} t + \varepsilon ) \lambda} \int_{\mathbb{R}} \frac{\mathrm{i}}{2 \sqrt{\lambda + \mathrm{i} \delta}} \mathrm{e}^{\mathrm{i} \sqrt{\lambda + \mathrm{i} \delta} |x_i-y_i|} \widetilde{u'}_{0,i}(y_i) - \dfrac{\mathrm{i}}{2 \sqrt{\lambda - \mathrm{i} \delta}} \mathrm{e}^{\mathrm{i} \sqrt{\lambda - \mathrm{i} \delta} |x_i-y_i|} \widetilde{u'}_{0,i}(y_i) \, dy_i \, d\lambda.
\end{aligned}
\end{equation*}

\noindent Making the change of variable $k=\sqrt{\lambda}$ and $k=-\sqrt{\lambda}$ in the first and second integral, respectively, and taking into account that $\lim_{\delta \to 0} \sqrt{k^2 \pm \mathrm{i} \delta} = \pm |k|$, by the dominated convergence theorem we get
\begin{equation*}
J_3= \dfrac{1}{\pi} \int_{\mathbb{R}} \mathrm{e}^{-(\mathrm{i} t + \varepsilon) k^2} \int_ \mathbb{R} \mathrm{e}^{\mathrm{i} k | x_i -y_i|} \widetilde{u'}_{0,i}(y_i) \, dy_i \, dk. \qquad \qquad \qquad \qquad \qquad \qquad \qquad \qquad \quad
\end{equation*}

\noindent Note that letting $\varepsilon$ go to zero, we recover precisely $2 \mathrm{e}^{ \mathrm{i} t \Delta_{\mathbb{R}}} \widetilde{u'}_0$ \cite[(3.6)-(3.7)]{angulopava}.

Proceeding with the same changes of variables in the case of the first term in \eqref{threelimits}, from the properties of the matrix $G$ in \cite[Proof of Lemma 3.3]{grecuignatjphysa}, again by the dominated convergence theorem we have
\begin{align} \label{finalextraterm}
J_1 &= \dfrac{1}{2 \pi} \int_{\mathbb{R}} \mathrm{e}^{-(\mathrm{i} t + \varepsilon) k^2} \mathrm{e}^{\mathrm{i}k x_i} \sum_{j=1}^n [\delta_{ij} - [ G(k;A,B)]_{i,j}] u_{0,j}(0) \, dk \\ \nonumber
& = \dfrac{1}{2 \pi} \int_{\mathbb{R}} \mathrm{e}^{-(\mathrm{i} t + \varepsilon) k^2} \mathrm{e}^{\mathrm{i}k x_i} \{ [ \mathbb{I}_n -G(k;A,B)] u_{0}(0)\}_i \, dk.
\end{align}

\noindent Thus, letting $\varepsilon$ tend to $0$, we obtain
\begin{equation} \label{derivativesemigroupwhole}
\begin{aligned}
\partial_{x_i}(\mathrm{e}^{- \mathrm{i} t H_{\star}} u_0(x_i)) & = \mathrm{e}^{-\mathrm{i} t H_{\star}} P_c u'_0(x_i) +  2 \mathrm{e}^{-\mathrm{i} t \Delta_{\mathbb{R}}} \widetilde{u'_0}(x_i) + \sum_{l=1}^{n_+(AB^{\dag})} \langle u_0,\varphi_l \rangle \mathrm{e}^{-(\mathrm{i}t + \varepsilon) \lambda_l} \varphi_l'(x_i)\\
& \quad + \lim_{\varepsilon \to 0}  \dfrac{1}{2 \pi} \int_{\mathbb{R}} \mathrm{e}^{-(\mathrm{i} t + \varepsilon) k^2} \mathrm{e}^{\mathrm{i}k x_i} \{ [ \mathbb{I}_n -G(k;A,B)] u_{0}(0)\}_i \, dk.
\end{aligned}
\end{equation}

\noindent By \cite[Lemma 2.1.3]{berkolaiko} we can write
\begin{equation} \label{otherGexpression}
G(k;A,B) = -P_D + P_N -(\Lambda + \mathrm{i} k)^{-1}(\Lambda - \mathrm{i} k)P_R,
\end{equation}

\noindent with $P_D,P_N,P_R$ and $\Lambda$ as in Theorem \ref{self-adjointconditions}. Now we use that by hypothesis there is no Robin part, i.e.$P_R=0$, so plugging \eqref{otherGexpression} into \eqref{finalextraterm}, we get that
\begin{equation*}
J_1 = \dfrac{1}{2 \pi} \int_{\mathbb{R}} \mathrm{e}^{-(\mathrm{i} t + \varepsilon) k^2} \mathrm{e}^{\mathrm{i}k x_i} [ P_D u_{0}(0)]_i \, dk =0,
\end{equation*}

\noindent since $u_0 \in D(\mathcal{E}_{\star})$, hence $P_D u_0(0)=0$.
\end{proof}

\begin{proof}[\bf Proof of Claim \ref{claim1}] From the equivalence of $H^1$ and $D(\mathcal{E_{\star}})-$norms by Proposition \ref{equivnorms}, the proof resumes to show that $ \lim_{\varepsilon \to 0} \, \| \mathrm{e}^{-(\mathrm{i} t + \varepsilon)H_{\star}}u_0 - \mathrm{e}^{-\mathrm{i}t H_{\star}} u_0\|_{D(\mathcal{E}_{\star})}=0$. Indeed, letting $h_{\varepsilon}(\lambda):=\mathrm{e}^{-(\mathrm{i} t + \varepsilon)\lambda} - \mathrm{e}^{-\mathrm{i}t \lambda}$, by the spectral representation of the form we have
\begin{equation*}
\| h_{\varepsilon}(H_{\star})u_0 \|_{D(\mathcal{E}_{\star})} = \int_{\sigma(H_{\star})} (\lambda +M)\,d \mu_{h_{\varepsilon}(H_{\star})u_0,h_{\varepsilon}(H_{\star})u_0}(\lambda).
\end{equation*}

\noindent By \cite[Theorem 3.1]{teschl}, for any self-adjoint operator $H$ and for every two bounded measurable functions $f$ and $g$ on $\mathbb{R}$, the spectral measure has the property
\begin{equation} \label{spectralmeasureproperty}
d \mu_{f(H)u,g(H)v} = f \bar{g} d \mu_{u,v}.
\end{equation}
\noindent Thus, 
\begin{equation*}
\| h_{\varepsilon}(H_{\star})u_0\|_{D(\mathcal{E}_{\star})} = \int_{\sigma(H_{\star})} (\lambda+M) |h_{\varepsilon}(\lambda)|^2 \,d \mu_{u_0,u_0}(\lambda) = \int_{\sigma(H_{\star})} (\lambda+M) |\mathrm{e}^{-\varepsilon \lambda}-1|^2 \,d \mu_{u_0,u_0}(\lambda) \longrightarrow 0,
\end{equation*}

\noindent by dominated convergence, since $\lambda \mapsto \lambda+M \in L^1(\sigma(H),d\mu_{u_0,u_0})$, because $u_0 \in D(\mathcal{E})$.
\end{proof}

\begin{proof}[\bf Proof of Claim \ref{claim2}] Let $\varepsilon>0$ and $u_0 \in D(\mathcal{E}_{\star})$. By the same equivalence of norms invoked in the proof of Claim \ref{claim1}, it sufficient to show that
\begin{equation} \label{spectralrepresentationDE}
\lim_{\delta \to 0} \dfrac{1}{2 \pi \mathrm{i}} \int_{\sigma(H_{\star})} \mathrm{e}^{-(\mathrm{i}t +\varepsilon)\lambda} [R_{\lambda + \mathrm{i} \delta} - R_{\lambda - \mathrm{i} \delta}] u_0 \, d\lambda = \mathrm{e}^{-(\mathrm{i}t + \varepsilon)H_{\star} }u_0 \quad \text{in} \ D(\mathcal{E}_{\star}).
\end{equation}

\noindent Since $\sigma(H_{\star}) \subset [-M,\infty)$, and setting for $z \in [-M,\infty)$,
\begin{equation*}
\begin{aligned}
& g^{\varepsilon}_{\delta}(z):=\dfrac{1}{2 \pi \mathrm{i}} \int_{-M}^{\infty} \mathrm{e}^{-(\mathrm{i} t + \varepsilon) \lambda} \Big[ \dfrac{1}{z-(\lambda + \mathrm{i} \delta)} - \dfrac{1}{ z- (\lambda- \mathrm{i}\delta)} \Big] \, d\lambda \\
& g^{\varepsilon}(z):=\mathrm{e}^{-(\mathrm{i} t + \varepsilon) z}.
\end{aligned}
\end{equation*}

\noindent \eqref{spectralrepresentationDE} rewrites as $\lim_{\delta \to 0} g^{\varepsilon}_{\delta}(H_{\star}) u_0 = g^{\varepsilon} (H_{\star}) u_0 \quad \text{in} \ D(\mathcal{E}_{\star})$.

\noindent But,by \eqref{spectralmeasureproperty}, $
\| g^{\varepsilon}_{\delta}(H_{\star}) u_0 - g^{\varepsilon}(H_{\star}) u_0 \|_{D(\mathcal{E}_{\star})}= \int_{-M}^{\infty} (\lambda+M) |g^{\varepsilon}_{\delta}(\lambda) - g^{\varepsilon}(\lambda)| \, d \mu_{u_0,u_0} (\lambda) $

\noindent which converges to $0$ by dominated convergence, since it is easy to check that $g^{\varepsilon}_{\delta} \xrightarrow[\delta \to 0]{} g^{\varepsilon}$ pointwise and boundedly on $[-M,\infty)$.
\end{proof}

\begin{proof}[\bf Proof of Theorem \ref{thm 2.16}] Let us first consider equation \eqref{truncated} for $R>0$, and let
\begin{equation*}
\widetilde{E}(T,a):=\{X^R \; (\mathcal{F}_t)\mbox{-adapted }: \; \vertiii{X} := |X^R|_{L^r_\omega(C([0,T];D(\mathcal{E}_{\star})))}+|X^R|_{L^r_{\mathcal{P}}(\Omega \times [0,T];W^{1,p}_x)} \leq a\}
\end{equation*}

\noindent Keeping the same notations as in the proof of Theorem \ref{solution truncated}, and using the fact that $S(t)$ is an isometry on $D(\mathcal{E}_{\star})$, but also that $\| \cdot \|_{D(\mathcal{E}_{\star})} \sim  \| \cdot \|_{H^1}$, we get
\begin{equation*}
\begin{aligned}
& |\mathcal{T}(X^R)|_{L^r_\omega(C([0,T];D(\mathcal{E}_{\star})))} \\
 & \leq |S_{\beta}(\cdot,0) X_0|_{L^r_\omega(C([0,T];D(\mathcal{E}_{\star})))} + \Big|\int_0^t S_{\beta}(t,s) \theta_R(X^R(s)) |X^R|^{2 \sigma}X^R(s) \, ds \Big|_{L^r_\omega(C([0,T];D(\mathcal{E}_{\star}))))} \\
& = |X_0|_{L^r_\omega(D(\mathcal{E}_{\star}))} +  c_1 \Big| \int_0^t S_{\beta}(t,s) \theta_R(X^R(s)) |X^R|^{2\sigma} X^R(s) \, ds  \Big|_{L^r_\omega(C([0,T];L^2_x))}\\
& \qquad \qquad \qquad \ +c_1 \Big| \int_0^t \dfrac{\partial}{\partial x}  S_{\beta}(t,s) \theta_R(X^R(s)) |X^R|^{2\sigma} X^R(s) \, ds  \Big|_{L^r_\omega(C([0,T];L^2_x))}\\
\end{aligned}
\end{equation*}

\noindent By Proposition \ref{derivativesemigroup},
\begin{align*}
|\mathcal{T}(X^R)&|_{L^r_\omega(C([0,T];D(\mathcal{E}_{\star}))} \\
& =|X_0|_{L^r_\omega(D(\mathcal{E}_{\star}))} + c_1 \Big| \int_0^t S_{\beta}(t,s) \theta_R(X^R(s)) |X^R|^{2\sigma} X^R(s) \, ds  \Big|_{L^r_\omega(C([0,T];L^2_x))}\\
& +c_1 (2 \sigma +1) \Big| \int_0^t S_{\beta}(t,s) \theta_R(X^R(s)) |X^R|^{2\sigma} \dfrac{\partial X^R}{\partial x} (s) \, ds  \Big|_{L^r_\omega(C([0,T];L^2_x))}\\
& + 2 c_1 (2 \sigma +1) \Big| \int_0^t S_{\mathbb{R}}(t-s) \theta_R(X^R(s)) |X^R|^{2\sigma} \dfrac{\partial \widetilde{X}^R}{\partial x} (s) \, ds  \Big|_{L^r_\omega(C([0,T];L^2_x))} \\
& + c_1 \Big| \int_0^t |\theta_R (X^R(s))| \sum_{l=1}^{n_+(AB^{\dag})} \big\{ \langle |X^R|^{2 \sigma} X^R(s), \varphi_l \rangle \mathrm{e}^{- \mathrm{i} [\beta(t) - \beta(s)] \lambda_l} \varphi'_l  \big\} \, ds  \Big|_{L^r_\omega(C([0,T];L^2_x))}
\end{align*}

\noindent where $\dfrac{\partial \widetilde{X}^R}{\partial x} (s)$ is understood in the sense of \eqref{extended}, $S_{\mathbb{R}}(t-s):=\mathrm{e}^{-\mathrm{i} [ \beta(t)-\beta(s) ] \Delta_{\mathbb{R}} }$ and $\{(\lambda_l, \varphi_l)\}_l$ are the eigencouples of $H_{\star}$. Applying the Strichartz-type estimates in Theorem \ref{stochastic strichartz}, which also hold for $S_{\mathbb{R}}$ \cite{debouarddebussche}, we get
\begin{align*}
|\mathcal{T}(X^R) & |_{L^r_\omega(C([0,T];D(\mathcal{E}_{\star}))} \\
& \leq |X_0|_{L^r_\omega(D(\mathcal{E}_{\star}))} + C(c_1,r,p) T^{\overline{\beta}} \Big|\theta_R(X^R) |X^R|^{2\sigma} X^R \Big|_{L^r_\omega L^{r'}_{[0,T]} L^{p'}_x}\\
& + 3 (2 \sigma+1)C(c_1,r,p)T^{\overline{\beta}} \Big|\theta_R(X^R) |X^R|^{2\sigma} \dfrac{\partial X^R}{\partial x} \Big|_{L^r_\omega L^{r'}_{[0,T]} L^{p'}_x}\\
& + \Big( \sum_{l=1}^{n_+(AB^{\dag})} \| \varphi_l \|_{L^p_x} \| \varphi_l \|_{L^2_x} \Big) T^{1/r'} \Big| \theta_R(X^R) |X^R|^{2 \sigma} X^R \Big|_{L^r_\omega L^{r'}_{[0,T]} L^{p'}_x}\\
& \leq |X_0|_{L^r_\omega(D(\mathcal{E}_{\star}))} + C\Big(c_1,r,p,\sum_{l=1}^{n_+(AB^{\dag})} \| \varphi_l \|_{L^p_x} \| \varphi_l \|_{L^2_x} \Big) T^{\hat{\beta}} (2R)^{2 \sigma} |X^R|_{L^r_\omega L^r_{[0,T]}L_x^p} \\
& + 3 (2 \sigma+1)C(c_1,r,p)T^{\overline{\beta}} \Big|\theta_R(X^R) |X^R|^{2\sigma} \dfrac{\partial X^R}{\partial x} \Big|_{L^r_\omega L^{r'}_{[0,T]} L^{p'}_x},
\end{align*}

\noindent for some $\hat{\beta} \geq 0$, where for the last inequality we used a similar estimate as \eqref{LrLrLpalready} from the proof of Theorem \ref{solution truncated}. 

\noindent Taking into account the definition of $\theta_R$ in \eqref{definition theta} and the fact that $p=2 \sigma+2$, in the case of the last term we get
\begin{equation*}
\begin{aligned}
 \Big|\theta_R(X^R) |X^R|^{2\sigma} \dfrac{\partial X^R}{\partial x} \Big|_{L^r_\omega L^{r'}_{[0,T]} L^{p'}_x}
 &=\Big|\theta_R(X^R)  \bigg( \int_{\Gamma_{\star}} |X^R|^{(p-2)p/(p-1)} \bigg| \dfrac{\partial X^R}{\partial x} \bigg|^{p/(p-1)} \, dx \bigg)^{(p-1)/p}\Big|_{L^r_\omega L^{r'}_{[0,T]}}.
\end{aligned}
\end{equation*} 

\noindent Applying H\"{o}lder inequality for the integral w.r.t. $x$, the above is less or equal
\begin{equation*}
\begin{aligned}
& \leq \bigg| \theta_R(X^R) \bigg( \bigg| |X^R|^{(p-2)p/(p-1)} \bigg|_{L^{\frac{p-1}{p-2}}_x} \times \bigg| \big| \dfrac{\partial X^R}{\partial x} \big|^{p/(p-1)} \bigg|_{L^{p-1}_x}  \bigg)^{(p-1/p)}  \bigg|_{L^r_\omega L^{r'}_{[0,T]}}\\
&=\bigg|  \theta_R(X^R) |X^R|^{p-2}_{L^p_x} \times \big| \dfrac{\partial X^R}{\partial x} \big|_{L^p_x} \bigg|_{L^r_\omega L^{r'}_{[0,T]}}\\
&= \bigg| \bigg[  \int_0^T \bigg( \theta_R(X^R(s)) |X^R(s)|^{p-2}_{L^p_x} \bigg)^{\frac{r}{r-1}} \bigg| \dfrac{\partial X^R}{\partial x}(s) \bigg|^{\frac{r}{r-1}}_{L^p_x} \, ds \bigg]^{1/r'} \bigg|_{L^r_\omega}
\end{aligned}
\end{equation*}

\noindent Applying now H\"{o}lder inequality for the integral w.r.t. $t$, 
\begin{align*}
&\leq \bigg| \bigg[ \,  \Big| \Big( \theta_R(X^R) |X^R|^{p-2}_{L^p_x} \Big)^{\frac{r}{r-1}} \Big|_{L^{\frac{r-1}{r-2}}[0,T]} \times \bigg| \Big| \dfrac{\partial X^R}{\partial x} \Big|^{\frac{r}{r-1}}_{L^p_x} \bigg|_{L^{r-1}[0,T]} \bigg]^{\frac{r-1}{r}} \bigg|_{L^r_\omega}\\
& = \bigg| \, \bigg[ \int_0^T \Big( \theta_R (X^R(s)) \Big)^{\frac{r}{r-2}} |X^R(s)|^{\frac{(p-2)r}{r-2}}_{L^p_x} \, ds \bigg]^{\frac{r-2}{r}}  \times \Big| \dfrac{\partial X^R}{\partial x} \Big|_{L^r([0,T];L^p_x)} \bigg|_{L^r_\omega}.
\end{align*}

\noindent Since $p-2 \leq r-2$, and setting $\widetilde{t}(\omega):= \inf \{s: \, |X^R(\omega)|_{L^r([0,s];L^p_x)} \geq 2R \}$, we get
\begin{align} \label{thetaestimate}
\nonumber & \leq \bigg| \, \bigg[ \int_0^T \Big( \theta_R (X^R(s)) \Big)^{\frac{r}{r-2}} |X^R(s)|^r_{L^p_x} \, ds \bigg]^{\frac{r-2}{r}}  \times \Big| \dfrac{\partial X^R}{\partial x} \Big|_{L^r([0,T];L^p_x)} \bigg|_{L^r_\omega} \\ \nonumber
& \leq \bigg| \, \bigg[ \int_0^{\widetilde{t}} |X^R(s)|^r_{L^p_x} \, ds \bigg]^{\frac{r-2}{r}}   \times \Big| \dfrac{\partial X^R}{\partial x} \Big|_{L^r([0,T];L^p_x)} \bigg|_{L^r_\omega} \\
& = \bigg| \, |X^R|^{r-2}_{L^r([0,\widetilde{t}];L^p_x)} \times \Big| \dfrac{\partial X^R}{\partial x} \Big|_{L^r([0,T];L^p_x)} \bigg|_{L^r_\omega} = (2R)^{r-2} \Big| \dfrac{\partial X^R}{\partial x} \Big|_{L^r_{\omega}(L^r([0,T];L^p_x))}.
\end{align}

\noindent Hence, 
\begin{align} \label{estimateinformnorm}
|\mathcal{T}(X^R)|_{L^r_\omega(C([0,T];D(\mathcal{E}_{\star})))} \nonumber & \leq |X_0|_{L^r_\omega(D(\mathcal{E}_{\star})))} + C\Big(c_1,r,p,\sum_{l=1}^{n_+(AB^{\dag})} \| \varphi_l \|_{L^p_x} \| \varphi_l \|_{L^2_x} \Big) T^{\hat{\beta}} (2R)^{2 \sigma} |X^R|_{L^r_\omega L^{r}_{[0,T]} L^{p}_x} \\
& +  3(2 \sigma +1) C(c_1,r,p) T^{\overline{\beta}} (2R)^{r-2}  \Big| \dfrac{\partial X^R}{\partial x} \Big|_{L^r_{\omega}(L^r([0,T];L^p_x))}.
\end{align}

\noindent We proceed now with the ${L^r_{\omega}(L^r([0,T];W^{1,p}_x))}$--norm:
\begin{equation*}
\begin{aligned}
|\mathcal{T}(X^R) & |_{ L^r_{\omega} L^r_{[0,T]}W^{1,p}_x} \\ 
& \leq |S_{\beta}(\cdot)X_0|_{L^r_{\omega} L^r_{[0,T]}W^{1,p}_x} + \Big|\int_0^t S_{\beta}(t-s) \theta_R(X^R(s)) |X^R|^{2 \sigma}X^R(s) \, ds \Big|_{L^r_{\omega} L^r_{[0,T]}W^{1,p}_x} \\
& = |S_{\beta}(\cdot)X_0|_{L^r_{\omega}(L^r([0,T];L^{p}_x))} + \Big|\dfrac{\partial}{\partial x} S_{\beta}(\cdot)X_0 \Big|_{L^r_{\omega}L^r_{[0,T]}L^{p}_x} \\
& \quad + \Big|\int_0^t S_{\beta}(t,s) \theta_R(X^R(s)) |X^R|^{2 \sigma}X^R(s) \, ds \Big|_{L^r_{\omega}L^r_{[0,T]}L^{p}_x} \\
& \quad  +  \Big| \dfrac{\partial}{\partial x} \int_0^t S_{\beta}(t,s) \theta_R(X^R(s)) |X^R|^{2 \sigma}X^R(s) \, ds \Big|_{L^r_{\omega}L^r_{[0,T]}L^{p}_x} =: I_1 + \widetilde{I_1} + I_2 + \widetilde{I_2}
\end{aligned}
\end{equation*}

\noindent For the first and the third term, by \eqref{LrLrLpalready} we already have that
\begin{equation*}
I_1+I_2 \leq c_{r,p}\; T^{\beta/2}\; |X_0|_{L^r_\omega L_x^2}+c_{r,p} \;T^{\tilde{\beta}}\; (2R)^{2\sigma} \; \big| X^R \big| _{L^r_{\omega}L^r_{[0,T]}L^{p}_x}.
\end{equation*}

\noindent In view of Proposition \ref{derivativesemigroup}, Theorem \ref{stochastic strichartz} (i) and \cite[Prop 3.10]{debouarddebussche}, and the estimate \eqref{Pp Strichartz}
\begin{equation*}
\begin{aligned}
\widetilde{I_1}
& \leq C \Big( c_1, r, p, \sum_{l=1}^{n_+(AB^{\dag})} \| \varphi_l \|_{L^2_x} \| \varphi'_l \|_{L^p_x}  \Big) T^{\beta/2} |X_0|_{L^r_\omega(D(\mathcal{E}_{\star}))}.
\end{aligned}
\end{equation*}

\noindent In the case of the last term, by Proposition \ref{derivativesemigroup}, Theorem \ref{stochastic strichartz} (ii) and \cite[Prop 3.10]{debouarddebussche}, and the estimates \eqref{LrLrLpalready} and \eqref{thetaestimate}, we get
\begin{equation*}
\begin{aligned}
\widetilde{I_2}
& \leq 3 (2 \sigma + 1) c_{r,p} T^{\overline{\beta}}  (2R)^{r-2} \Big| \dfrac{\partial X^R}{\partial x} \Big|_{L^r_{\omega}L^r_{[0,T]}L^{p}_x} + \sum_{l=1}^{n_+(AB^{\dag})} \| \varphi_l \|_{L^p_x} \| \varphi'_l \|_{L^p_x} T^{2/r} (2R)^{2 \sigma} |X^R|_{L^r_{\omega}L^r_{[0,T]}L^{p}_x}
\end{aligned}
\end{equation*}

\noindent Thus, we have that 
\begin{equation*}
\begin{aligned}
|\mathcal{T}&(X^R)  |_{ L^r_{\omega}L^r_{[0,T]}W^{1,p}_x} \leq  C \Big( c_1, r, p, \sum_{l=1}^{n_+(AB^{\dag})} \| \varphi_l \|_{L^2_x} \| \varphi'_l \|_{L^p_x}  \Big) T^{\beta/2} |X_0|_{L^r_\omega(D(\mathcal{E}_{\star}))}\\
& + c_{r,p} \max \Big[ T^{\tilde{\beta}} (2R)^{2\sigma} , 3 (2 \sigma + 1) T^{\overline{\beta}}  (2R)^{r-2},  \sum_{l=1}^{n_+(AB^{\dag})} \| \varphi_l \|_{L^p_x} \| \varphi'_l \|_{L^p_x} T^{2/r} (2R)^{2 \sigma} \Big] |X^R|_{L^r_{\omega}L^r_{[0,T]}W^{1,p}_x}
\end{aligned}
\end{equation*}

\noindent Corroborating this with \eqref{estimateinformnorm}, we finally obtain
\begin{equation*}
\begin{aligned}
& \vertiii{\mathcal{T}(X^R)}  \leq \bigg[ 1+ C \Big( c_1, r, p, \sum_{l=1}^{n_+(AB^{\dag})} \| \varphi_l \|_{L^2_x} \| \varphi'_l \|_{L^p_x}  \Big) T^{\beta/2} \bigg] |X_0|_{L^r_\omega(D(\mathcal{E}_{\star})))} + c_{r,p} \max \Big[ T^{\tilde{\beta}} (2R)^{2\sigma} , \\
&  3 (2 \sigma + 1) T^{\overline{\beta}}  (2R)^{r-2},  \sum_{l=1}^{n_+(AB^{\dag})} \| \varphi_l \|_{L^p_x} \| \varphi'_l \|_{L^p_x} T^{2/r} (2R)^{2 \sigma}, \sum_{l=1}^{n_+(AB^{\dag})} \| \varphi_l \|_{L^p_x} \| \varphi_l \|_{L^2_x} T^{\hat{\beta}} (2R)^{2 \sigma} \Big] |X^R|_{L^r_{\omega}L^r_{[0,T]}W^{1,p}_x}
\end{aligned}
\end{equation*}
Choosing now $a$ to be twice of the first term of the r.h.s. of the previous inequality, there exists $T \leq  1$ not depending on $|X_0|_{L^2_{\omega};D(\mathcal{E}_{\star})}$ s.t. the coefficient of $ |X^R|_{L^r_{\omega}(L^r([0,T];W^{1,p}_x))}$ is less than $1/2$, and hence $\mathcal{T}$ is well-defined from $\widetilde{E}(T,a)$ to $\widetilde{E}(T,a)$. Since closed balls of $L^r_\omega(C([0,T];D(\mathcal{E}_{\star}))) \cap L^r_{\mathcal{P}}(\Omega \times [0,T];W^{1,p}_x)$ are closed in $L^r_{\mathcal{P}}(\Omega \times [0,T];L^{p}_x)$, the fixed point of $\mathcal{T}$ obtained in Theorem \ref{solution truncated} belongs in fact to $L^r_\omega(C([0,T];D(\mathcal{E}_{\star}))) \cap L^r_{\mathcal{P}}(\Omega \times [0,T];W^{1,p}_x)$. By iteration, we get that $X^R$ has $\mbox{a.s.}$ paths in $C(\mathbb{R_+;D(\mathcal{E}_{\star})})$.

To get the desired result for $X$ knowing it holds for $X^R$, we use localization: let $T_R:=\inf \{ t \geq 0: \ |X|_{L^r([0,t];L^p_x)} \geq R \}$, where $X$ is the $L^2-$solution to \eqref{mild} given by Theorem \ref{wellposednessL2}, in particular $T_R \underset{k \to \infty}\longrightarrow \infty \mbox{a.s.}$. Now the result follows since on $[0,T_R]$ we have $X=X^R$. 
\end{proof}

\begin{proof}[\bf Proof of Theorem \ref{convergence3}]
First of all, for each $\varepsilon, R >0$, let $X_\varepsilon^{R}$ be the solution to \eqref{eq 2.13}, given by Theorem \ref{well-posedness}. 
By Theorem \ref{convergence}, for each $R$, $(X_\varepsilon^{R})_{\varepsilon>0}$ converges in distribution to the solution $X^{R}$ of \eqref{eq 2.14} when $\varepsilon \to 0$, in $C([0,T]; D(\mathcal{E}_\star))$, and by Skorohod Theorem, after a change of probability, there is no loss if we assume that the convergence holds $\mathbb{P}$-a.s.
Set
\begin{equation*}
\begin{aligned}
&\tau_\varepsilon^{R}:=\inf \{t>0 : |X_\varepsilon^{R}|_{L^{\infty}(\Gamma_{\star})}\geq \sqrt{R} \}\\
&\tau^{R}:=\inf \{t>0 : |X^{R}|_{L^{\infty}(\Gamma_{\star})}\geq \sqrt{R} \}\\
&\tau_\varepsilon=\lim\limits_{R\to \infty} \tau_\varepsilon^{R}.
\end{aligned}
\end{equation*}
Now, for each $\varepsilon>0$, $(X_\varepsilon^{R})_{R>0}$ can be superposed to obtain a unique local solution $X_\varepsilon$ to \eqref{approximate} in $C([0,T];D(\mathcal{E}_\star))$, on $[0, \tau_\varepsilon)$.
Also, by Theorem \ref{thm 2.16}, equation \eqref{mild} has a unique solution $X$ with paths a.s. in $C([0,T]; D(\mathcal{E}_\ast))$, hence it must coincide with $X^{R}$ on $[0, \tau^{R})$, a.s.
In particular
\begin{equation} \label{eq 4.19}
\lim\limits_{R\to \infty} \mathbb{P}([\tau^{R}<T])=0.
\end{equation}
Further, since $|\cdot|_{D(\mathcal{E}_\ast)}$ is equivalent with the $H^{1}$-norm on $\Gamma_\star$, we have the continuous embedding $D(\mathcal{E}_\ast)\subset L^{\infty}(\Gamma_\star)$.
Hence $(X_{\varepsilon}^{R})_{\varepsilon>0}$ converges a.s. to $X^{R}$ in $C([0,T]; L^{\infty}(\Gamma_\star))$, and consequently, for all $R\geq 0$ there exists $\varepsilon_R$ s.t.
\begin{equation*} \label{eq 4.20}
\tau_\varepsilon\geq \tau_\varepsilon^{R+1}\geq \tau^{R} \mbox{ for all } \varepsilon \geq \varepsilon_R.   
\end{equation*}
By \eqref{eq 4.19} we get
$\lim\limits_{\varepsilon\to 0} \mathbb{P}([\tau_\varepsilon<T])=0,$
and finally,
\begin{align*}
\lim\limits_{\varepsilon\to 0}\mathbb{P}(&|X_\varepsilon1_{[0,\tau_\varepsilon)}-X|_{C([0,T];D(\mathcal{E}_\star))}\geq \delta) \\
&\leq \lim\limits_{\varepsilon\to 0}\mathbb{P}(\tau^{R}\geq T,\; |X_\varepsilon1_{[0,\tau_\varepsilon)}-X|_{C([0,T];D(\mathcal{E}_\star))}\geq \delta) +  \mathbb{P}(\tau^{R}<T)\\
&= \lim\limits_{\varepsilon\to 0}\mathbb{P}(\tau^{R}\geq T,\; |X^{R}_\varepsilon1_{[0,\tau_\varepsilon)}-X^{R}|_{C([0,T];D(\mathcal{E}_\star))}\geq \delta) +  \mathbb{P}(\tau^{R}<T)\\
& = \mathbb{P}(\tau^{R}<T) \mathop{\longrightarrow}\limits_{R\to \infty} 0. \qedhere
\end{align*}
\end{proof}

\begin{proof}[\bf Proof of Proposition \ref{derivativesemigroupdelta}] In the case of $\delta-$type condition \eqref{delta}, there is non-zero Robin part. More precisely, by \cite[Subsection 1.4.4]{berkolaiko}, $P_R=\mathbb{I}_n - P_D$ which is the orthogonal projection onto the space of vectors with equal coordinates, and $\Lambda$ acts as the multiplication with $\alpha/n$. Then, by \eqref{otherGexpression},
\begin{equation*}
\{ [ \mathbb{I}_n -G(k;A,B)] u_{0}(0)\}_i = \dfrac{2 \alpha}{\alpha - \mathrm{i}n k} u_{0,1}(0).
\end{equation*}

\noindent Hence, by \cite[Identity (2.15)]{adami}, the last term in \eqref{derivativesemigroupwhole} equals
\begin{align}
\nonumber & \lim_{\varepsilon \to 0}  \dfrac{1}{2 \pi} \int_{\mathbb{R}} \mathrm{e}^{-(\mathrm{i} t + \varepsilon) k^2} \mathrm{e}^{\mathrm{i}k x_i} \{ [ \mathbb{I}_n -G(k;A,B)] u_{0}(0)\}_i \, dk = 2 \alpha u_{0,1}(0) \lim_{\varepsilon \to 0}  \dfrac{1 }{2 \pi} \int_{\mathbb{R}} \dfrac{\mathrm{e}^{-(\mathrm{i} t + \varepsilon) k^2} \mathrm{e}^{\mathrm{i}k x_i}}{\alpha -  \mathrm{i}  n k} \, dk\\ \nonumber
& = \dfrac{2 \alpha}{n} u_{0,1}(0) \int_0^{\infty} \mathrm{e}^{-\frac{\alpha}{n} u} \dfrac{e^{\mathrm{i \frac{(x_i + u)^2}{4 t}}}}{\sqrt{4 \pi \mathrm{i} t}} \, du = \dfrac{2 \alpha}{n} u_{0,1}(0) (\mathrm{e}^{- \mathrm{i} t \Delta_{\mathbb{R}}} \psi)(x_i). \qedhere
\end{align}
\end{proof}

\section*{Acknowledgement}
The second named author was partially supported by PhD fellowships of University of Bucharest and L’Agence Universitaire de la Francophonie and by  a BITDEFENDER Junior Research Fellowship from "Simion Stoilow" Institute of Mathematics of the Romanian Academy.

We would like to thank Prof. Liviu Ignat for introducing the authors to this subject and for helpful discussions.

\bibliographystyle{abbrv}

\addcontentsline{toc}{section}{\refname}

{\footnotesize\bibliography{bibfileia}}

\end{document}